\newcommand{\mute}[1] {}
\newcommand{\mc}[1]{\mathcal{#1}}
\def \Z{\mathbb{Z}}
\def \R{\mathbb{R}}
\def \C{\mathbb{C}}
\def \HH{\mathbb{H}}
\def \del{\partial}
\def \vphi{\varphi}
\def \eps{\varepsilon}
\def \ssm{\smallsetminus}
\newtheorem*{Theorem*}{Theorem}
\newtheorem*{Lemma*}{Lemma}
\newtheorem*{Proposition*}{Proposition}
\newtheorem{Theorem}{Theorem}
\newtheorem{Lemma}[Theorem]{Lemma}
\newtheorem{Proposition}[Theorem]{Proposition}
\newtheorem*{Claim}{Claim}
\newtheorem{Corollary}[Theorem]{Corollary}
\theoremstyle{definition}
\newtheorem{Definition}[Theorem]{Definition}
\newtheorem{Fact}[Theorem]{Fact}
\newtheorem{Example}[Theorem]{Example}
\theoremstyle{remark}
\newtheorem{Remark}[Theorem]{Remark}
\DeclareMathOperator{\Env}{Env}
\DeclareMathOperator{\In}{In}
\DeclareMathOperator{\Outenv}{Out}
\DeclareMathOperator{\Span}{Span}
\newcommand{\te}{\stackrel{{}_\ast}{\asymp}}
\newcommand{\tg}{\stackrel{{}_\ast}{\succ}}
\newcommand{\tl}{\stackrel{{}_\ast}{\prec}}
\newcommand{\pe}{\stackrel{{}_+}{\asymp}}
\newcommand{\pl}{\stackrel{{}_+}{\prec}}
\newcommand{\PML}{\ensuremath{\mathcal{PML}}\xspace}
\begin{document}

\title{Geodesic Envelopes in the Thurston Metric 
on Teichm\"{u}ller space}

\author{Assaf Bar-Natan}

\address{Department of Mathematics, Brandeis University, Waltham, MA}

\email{assaf.bar.natan@gmail.com}

\date{\today}

\begin{abstract}
  The Thurston metric on Teichm\"{u}ller space, first introduced by W.
  P. Thurston is an asymmetric metric on Teichm\"{u}ller space defined
  by $d_{Th}(X,Y) = \frac12 \log\sup_{\alpha}
  \frac{l_{\alpha}(Y)}{l_{\alpha}(X)}$. This metric is geodesic, but
  geodesics are far from unique. In this thesis, we show that in the
  once-punctured torus, and in the four-times punctured sphere,
  geodesics stay a uniformly-bounded distance from each other.  In other
  words, we show that the \textbf{width} of the \textbf{geodesic
  envelope}, $E(X,Y)$ between any pair of points $X,Y \in \mc{T}(S)$
  (where $S = S_{1,1}$ or $S = S_{0,4}$) is bounded uniformly.  To do
  this, we first identify extremal geodesics in $Env(X,Y)$, and show
  that these correspond to \textbf{stretch vectors}, proving
  a conjecture from \cite{HOP21}. We then compute Fenchel-Nielsen
  twisting along these paths, and use these computations, along with
  estimates on earthquake path lengths, to prove the main theorem.
\end{abstract}

\maketitle
\vspace{-0.25cm}

\begin{figure}[ht]
  \begin{center}
    \includegraphics[width=0.3\textwidth]{./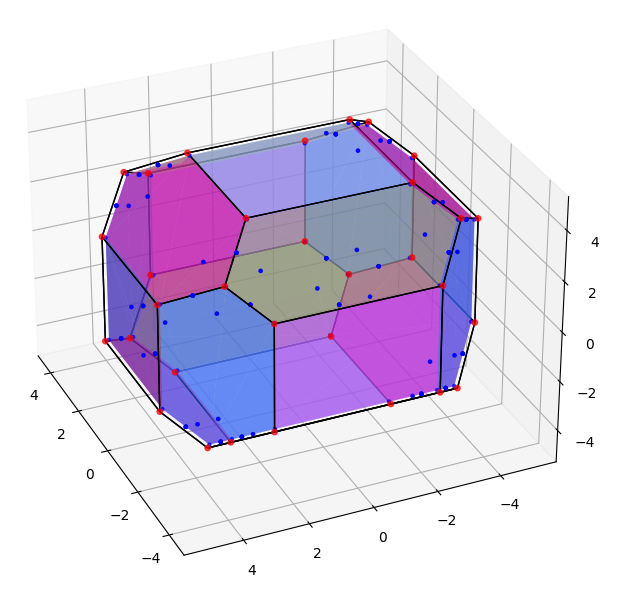}
  \end{center}
  \caption{The unit tangent sphere is the convex hull of stretch 
  vectors (in red). Combinatorially, this is a chamfered cube.}
\end{figure}

\section{Introduction} \label{sec:intro}

Let $S$ be a closed orientable surface with negative Euler
characteristic, and let $\mc{T}(S)$ be its Teichm\"{u}ller space. In
this paper, we study the infinitesimal geometry of the Thurston
metric, $d_{Th}$ on $\mc{T}(S)$, answering a question of Huang,
Ohshika and Papadopoulos \cite{HOP21} in the affirmative. We then
focus on two low-complexity cases, where the infinitesimal results are
used to show that Thurston geodesics in these Teichm\"{u}ller spaces
have uniformly bounded width. 

In Thurston's paper on the Thurston metric \cite{Thu86}, he proves
that $(\mc{T}(S), d_{Th})$ is a geodesic metric space by constructing
a special family of paths called \textit{stretch paths}. For any $X
\in \mc{T}(S)$, we define the \textit{stretch vectors at $X$}, $SV_X$
to be the set of $1$-jets of stretch paths starting at $X$. For 
each $X$ we can also look at the unit tangent sphere, $\bm{S}_X$ 
which strictly contains $SV_X$. We prove:

\begin{Theorem} \label{thm:stretch-vectors-extremal}
  $\bm{S}_X$ is the convex hull of $SV_X$. Moreover, $SV_X$ is
  precisely the set of extreme points in $\bm{S}_X$.
\end{Theorem}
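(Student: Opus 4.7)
The plan is to invoke Thurston's formula for his norm, $\|v\|_X = \sup_{\mu \in \ML(S)} \frac{dl_\mu(v)}{l_\mu(X)}$, which realizes the unit ball $B_X \subset T_X\mc{T}(S)$ as the compact convex body $\bigcap_\mu \{v : dl_\mu(v) \le l_\mu(X)\}$ in a finite-dimensional vector space. By the Krein--Milman theorem $B_X$ is the closed convex hull of its extreme points, so the convex-hull assertion will follow from the stronger claim that the set of extreme points of $B_X$ equals $SV_X$. I would prove the two inclusions separately.

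For $SV_X \subseteq \operatorname{ext}(B_X)$, the key input is that along the stretch path directed by a maximal lamination $\lambda$, every measured lamination $\mu$ supported on $\lambda$ is stretched at the maximal logarithmic rate, so $f_\mu(v_\lambda) := \frac{dl_\mu(v_\lambda)}{l_\mu(X)} = 1$. If $v_\lambda = tv_1 + (1-t)v_2$ is a nontrivial convex combination with $v_1, v_2 \in B_X$, the triangle inequality forces $\|v_1\| = \|v_2\| = 1$; linearity of $f_\mu$ together with $f_\mu(v_i) \le 1$ then forces $f_\mu(v_1) = f_\mu(v_2) = 1$ for every measured sublamination $\mu$ of $\lambda$. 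I would then invoke the rigidity of stretch paths: since a maximal lamination has no moduli in its complementary ideal triangles, any infinitesimal deformation of $X$ that stretches every transverse measure on $\lambda$ at unit log-rate is uniquely determined and equals $v_\lambda$. Hence $v_1 = v_2 = v_\lambda$.

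For the reverse inclusion, let $v$ be an extreme point of $B_X$ and let $|\mathcal{E}(v)|$ denote the geometric support of its maximizing set $\mathcal{E}(v) = \{\mu \in \ML(S) : f_\mu(v) = 1\}$, which is a chain-recurrent geodesic lamination. The heart of the argument is to show $|\mathcal{E}(v)|$ is maximal. If not, some complementary region properly contains an ideal triangle, and the face $F = \{w \in B_X : f_\mu(w) = 1 \text{ for every } \mu \text{ with } |\mu| \subset |\mathcal{E}(v)|\}$ has positive dimension, arising from infinitesimal deformations supported in the non-triangular complementary region that preserve the log-length-derivative of each leaf of $|\mathcal{E}(v)|$. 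Then $v$ lies in the relative interior of a positive-dimensional convex subset of $\partial B_X$, contradicting extremality. Hence $|\mathcal{E}(v)|$ is maximal, and the uniqueness established in the first inclusion identifies $v$ as $v_{|\mathcal{E}(v)|} \in SV_X$.

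The main obstacle I expect is making the two rigidity claims precise: (i) unit-log-stretching of a maximal $\lambda$ uniquely determines $v_\lambda$, and (ii) a non-maximal $|\mathcal{E}(v)|$ yields a positive-dimensional face $F$. Both are infinitesimal shadows of structural results from \cite{Thu86}: (i) exploits the triviality of the deformation space of an ideal triangle, while (ii) requires exhibiting explicit tangent vectors supported in the non-triangular complementary regions that vanish to first order on every length functional $l_\mu$ with $|\mu| \subset |\mathcal{E}(v)|$. This infinitesimal rigidity is precisely the content pictured by the chamfered-cube figure in the introduction, where the red ``vertices'' (stretch vectors) correspond to maximal laminations and the higher-dimensional faces correspond to non-maximal sublaminations.
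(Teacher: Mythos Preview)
Your approach is genuinely different from the paper's and is essentially correct in outline, but it takes the ``face-analysis'' route in the spirit of \cite{HOP21}, whereas the paper argues dynamically via Thurston's concatenation construction. The paper never analyzes supporting hyperplanes of the norm ball. Instead it uses that every Thurston geodesic from $X$ is a finite concatenation of stretch segments along laminations in $M(\Lambda(X,Y))$ (Thurston's Theorem~8.5), proves a first-order Taylor-expansion lemma (Lemma~\ref{lem:flows-lemma}) showing that the $1$-jet of any such concatenation lies in the nonnegative span of the individual stretch directions, and then upgrades to a \emph{convex} combination by checking unit norm. Closedness of the cone when there are infinitely many completions is handled separately (Lemma~\ref{lem:cone-SV-closed}). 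Extremality of stretch vectors is obtained a~posteriori: once the ball is known to be the convex hull of $SV_X$, one only needs that no $v_\lambda$ is a nontrivial convex combination of \emph{other stretch vectors} (Lemma~\ref{lem:stretch-vectors-not-internal}, following Thurston's Theorem~5.2). The payoff of this route is that neither of your rigidity claims~(i) or~(ii) has to be established directly; the cost is the somewhat delicate bookkeeping in Lemma~\ref{lem:flows-lemma}.

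There is one conceptual slip in your justification for~(i) worth flagging. Transverse measures on a maximal lamination $\lambda$ are supported only on the chain-recurrent part $\lambda^{CR}$; the isolated (non-recurrent) leaves carry no measure and hence impose no constraint through the functionals $f_\mu$. So the heuristic ``a maximal lamination has no moduli in its complementary ideal triangles'' does not apply as stated: the equalities $f_\mu(v_i)=1$ see only $\lambda^{CR}$, not the full triangulation. What actually saves the argument is that for $v_\lambda\in SV_X$ the sublamination $\lambda^{CR}$ is by definition \emph{maximal chain-recurrent}, and for such $\lambda^{CR}$ the corresponding face of the unit ball is a single point. That, however, is exactly the rigidity you are trying to prove; it follows from Thurston's uniqueness of geodesics when the maximally stretched lamination is maximal chain-recurrent (cf.\ Lemma~\ref{lem:v_lambda-well-defined} here and \cite[Corollary~2.3]{DLRT20}), together with the fact from \cite{PW22} that every unit vector is tangent to a geodesic---not from a moduli count on ideal triangles. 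Your claim~(ii) is correct and is essentially the positive-dimensional-face direction of the stratification in \cite{HOP21}; the paper's approach sidesteps it entirely.
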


Geodesics in the Thurston metric are far from unique \cite{DLRT20,
LRT14}. We define the \textit{geodesic envelope} between $X$ and $Y$
to be the collection of points along geodesics between $X$ and $Y$,
and in general this envelope has unbounded \textit{width}, 
denoted by $w(X,Y)$ (ie, 
two geodesics between $X$ and $Y$ can grow arbitrarily far 
apart from each other).

In Section \ref{sec:bounded-width}, we begin by computing 
estimates for the Thurston distance 
between a point $X$ and its earthquake 
along a simple closed curve $\alpha$, in terms of 
$l_{\alpha}(X)$. We prove:

\begin{Proposition} \label{prop:earthquake-bound}
  Let $\alpha$ be a simple closed curve on $S$, and let 
  $X \in \mc{T}(S)$. Then there exists some uniform constant 
  $C$ such that 
  \begin{align*}
    d_{Th}(X,Eq_{\alpha,t}(X)) \le \log(e^{l_{\alpha}/2}t) + C
  \end{align*}
\end{Proposition}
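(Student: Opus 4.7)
The plan is to bound the length ratio $l_\beta(Eq_{\alpha,t}(X))/l_\beta(X)$ uniformly over simple closed curves $\beta$, and then apply the definition $d_{Th}(X,Y) = \tfrac{1}{2}\log\sup_\beta l_\beta(Y)/l_\beta(X)$. Any curve $\beta$ disjoint from $\alpha$, as well as $\alpha$ itself, is preserved by the earthquake and contributes ratio $1$; so only curves with $i(\alpha,\beta) \ge 1$ contribute to the supremum, and I restrict to such $\beta$.

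For the numerator, I would use the standard earthquake length comparison. Viewing $Eq_{\alpha,t}(X)$ as obtained by cutting $X$ along $\alpha$ and regluing with a twist of length $t\,l_\alpha$, the geodesic representative of $\beta$ in $X$ splits into $i(\alpha,\beta)$ arcs; reassembling them into a loop in $Eq_{\alpha,t}(X)$ can be done by inserting a bridging arc along $\alpha$ of length at most $t\,l_\alpha$ at each of the $i(\alpha,\beta)$ crossings, giving a (non-geodesic) representative of length
\[
l_\beta(Eq_{\alpha,t}(X)) \le l_\beta(X) + t\,l_\alpha\,i(\alpha,\beta).
\]
(Equivalently, this is Wolpert's cosine derivative $\left|\tfrac{d}{ds}l_\beta(Eq_{\alpha,s}X)\right|\le i(\alpha,\beta)$ integrated from $0$ to $t\,l_\alpha$.) For the denominator I would invoke the collar lemma: the standard collar about $\alpha$ has half-width $w(l_\alpha)$ with $\sinh w(l)\sinh(l/2)=1$, and $\beta$ contributes at least $2w(l_\alpha)$ of length per crossing, so $l_\beta(X)\ge 2w(l_\alpha)\,i(\alpha,\beta)$. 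The asymptotic $w(l)\gtrsim e^{-l/2}$ for large $l$ is exactly what produces the $e^{l_\alpha/2}$ factor.

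Combining the two bounds, the factor $i(\alpha,\beta)$ cancels and
\[
\frac{l_\beta(Eq_{\alpha,t}(X))}{l_\beta(X)} \le 1 + \frac{t\,l_\alpha}{2w(l_\alpha)} \lesssim t\,l_\alpha\,e^{l_\alpha/2},
\]
uniformly in $\beta$. Taking $\tfrac{1}{2}\log$ and absorbing the subdominant $\tfrac{1}{2}\log l_\alpha$ into $l_\alpha/2$ (using that $\tfrac{1}{2}\log l - l/4$ is bounded above on $(0,\infty)$) yields $d_{Th}(X,Eq_{\alpha,t}(X))\le \log(e^{l_\alpha/2}t)+C$ with a uniform $C$. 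The main technical point is the earthquake length comparison: one must be careful about the sign of the twist, the fact that for $t\gg 1$ the bridging arcs wrap around $\alpha$ many times (and so do contribute lengths proportional to $t\,l_\alpha$ rather than being replaceable by their residues mod $l_\alpha$), and the parametrization convention for $Eq_{\alpha,t}$; the collar-lemma lower bound and the final logarithmic bookkeeping are essentially routine.
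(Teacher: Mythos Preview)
Your argument is correct and in fact more elementary than the paper's. The paper does not use the collar lemma directly to bound $i(\alpha,\beta)/l_\beta(X)$ over all simple closed curves; instead it invokes Theorem~E of \cite{LRT14} to reduce the supremum to the finitely many curves $\beta$ in a \emph{short marking} $\mu_X$, and then proves a separate lemma establishing $i(\alpha,\beta)/l_\beta(X)\tl e^{l_\alpha/2}$ for $\beta\in\mu_X$ via a case analysis on whether $\beta$ is short, whether its dual $\bar\beta$ is short, and whether $\alpha$ meets $\bar\beta$. Your observation that the collar lemma alone already gives $l_\beta(X)\ge 2w(l_\alpha)\,i(\alpha,\beta)$ for \emph{every} $\beta$ crossing $\alpha$, together with the uniform bound $1/(2w(l_\alpha))\le C\,e^{l_\alpha/2}$, bypasses both the short-marking reduction and that case analysis. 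What the paper's route buys is a framework that generalizes to other coarse distance estimates where the short marking is genuinely needed; what your route buys is a self-contained proof using nothing beyond the collar lemma and the length-ratio definition of $d_{Th}$.

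Two convention mismatches worth flagging: in the body of the paper $d_{Th}$ is defined without the factor $\tfrac12$ (the abstract's $\tfrac12$ is a typo), and the paper's earthquake is parametrized so that $Eq_{\alpha,t}$ shears by length $t$, giving $l_\beta(Eq_{\alpha,t}(X))\le l_\beta(X)+t\,i(\alpha,\beta)$ with no $l_\alpha$ factor. With those conventions your estimate becomes $l_\beta(Eq)/l_\beta\le 1+t/(2w(l_\alpha))\le 1+C t\,e^{l_\alpha/2}$, and taking $\log$ gives the stated bound directly, without the extra $\tfrac12\log l_\alpha$ bookkeeping.
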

We then use this result, combined with the 
computations in Section \ref{sec:infinitesimal-envelope} 
to show that:

\begin{Theorem} \label{thm:bounded-envelope}
  Let $S$ be the once-punctured torus or the four-times punctured
  sphere. There exists some $B>0$ such that for any $X,Y \in
  \mc{T}(S)$, $w(X,Y) < B$.
\end{Theorem}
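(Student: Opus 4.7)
The plan is to reduce the problem, via Theorem~\ref{thm:stretch-vectors-extremal}, to comparing two ``boundary'' geodesics of the envelope, and then use Proposition~\ref{prop:earthquake-bound} to control their pointwise Thurston distance.

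First, I would exploit the fact that for $S = S_{1,1}$ or $S_{0,4}$, Teichm\"{u}ller space is real two-dimensional, so each envelope $E(X,Y)$ is a planar ``lens''-shaped region bounded by two geodesics $\gamma_1, \gamma_2 \colon [0, L] \to \mc{T}(S)$, where $L = d_{Th}(X,Y)$. By Theorem~\ref{thm:stretch-vectors-extremal}, the initial velocities of $\gamma_1$ and $\gamma_2$ must be extreme points of $\bm{S}_X$, i.e., stretch vectors (and, symmetrically, the reversed geodesics leave $Y$ along stretch directions). Combined with the infinitesimal analysis of Section~\ref{sec:infinitesimal-envelope}, this identifies $\gamma_1$ and $\gamma_2$ as explicit piecewise-stretch paths whose Fenchel--Nielsen coordinates along the way can be computed.

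Second, for a simple closed curve $\alpha$ appearing in the maximal chains defining $\gamma_1$ and $\gamma_2$, I would reparametrize both boundary geodesics so that $l_\alpha(\gamma_1(t)) = l_\alpha(\gamma_2(t))$ at corresponding times (the length growth along a stretch path is monotone and computable, so this is an admissible comparison). Then, using the twisting computations of the previous section, I would show that the Fenchel--Nielsen twists $\tau_\alpha(\gamma_1(t))$ and $\tau_\alpha(\gamma_2(t))$ differ by at most $K e^{-l_\alpha(\gamma_1(t))/2}$ for a uniform constant $K$. The endpoint conditions $\gamma_1(0) = \gamma_2(0) = X$ and $\gamma_1(L) = \gamma_2(L) = Y$ force an almost-cancellation of the twist accumulated along the two paths, and establishing this estimate — in particular ensuring that the $e^{-l_\alpha/2}$ factor is the \emph{right} exponential decay to pair with the earthquake bound — is where I expect the main obstacle to lie.

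Third, writing $\gamma_2(t) = Eq_{\alpha, s}(\gamma_1(t))$ for some earthquake time $s$ with $|s| \le K e^{-l_\alpha/2}$, Proposition~\ref{prop:earthquake-bound} gives
\[
  d_{Th}(\gamma_1(t), \gamma_2(t)) \le \log\!\bigl(e^{l_\alpha/2}\cdot K e^{-l_\alpha/2}\bigr) + C = \log K + C,
\]
a uniform bound independent of $X$, $Y$ and $t$. Finally, since any geodesic from $X$ to $Y$ lies inside the two-dimensional lens $E(X,Y)$ bounded by $\gamma_1$ and $\gamma_2$, any pair of points on distinct geodesics at matching Thurston-time lies between a corresponding pair on $\gamma_1, \gamma_2$ and is thus controlled by the same estimate, yielding $w(X,Y) < B$ for $B := \log K + C$.
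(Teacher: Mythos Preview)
Your overall architecture matches the paper's: reduce to the case where $\Lambda(X,Y)$ is a simple closed curve $\alpha$, identify the extremal geodesics of the envelope with the two stretch paths twisting left and right around $\alpha$, and then bound the Thurston distance between these two paths at each moment. The paper carries this out by working with the backward stretch paths $Y_{-t}^L$ and $Y_{-t}^R$ and showing $d_{Th}(Y_{-t}^L,Y_{-t}^R)$ is uniformly bounded.

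However, your key quantitative claim is false as stated, and this is a genuine gap. You assert that the twist difference satisfies $|\tau_\alpha(\gamma_1(t)) - \tau_\alpha(\gamma_2(t))| \le K e^{-l_\alpha/2}$ uniformly, and then feed this into Proposition~\ref{prop:earthquake-bound}. The paper's explicit computation (Equation~\eqref{eq:four-punctured-twist-width}) gives the twist width as essentially $4\log\coth(l_{-t})$, where $l_{-t}$ is comparable to $l_\alpha$ at that moment. When $l_\alpha$ is \emph{large} this does decay like $e^{-l_\alpha}$, and your earthquake argument goes through; this is exactly the paper's ``thick part'' Lemma~\ref{lem:thick-part-uniform-bound}. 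But when $l_\alpha \to 0$ the twist width blows up like $\log(1/l_\alpha)$, while $e^{-l_\alpha/2}\to 1$, so no uniform $K$ exists. In this regime Proposition~\ref{prop:earthquake-bound} gives a bound of order $\log\log(1/l_\alpha)$, which is not uniform.

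The paper handles the thin part by an entirely different mechanism: when $l_\alpha$ is short, $\alpha$ is the systole and the dual curve $\beta$ has length $\asymp \log(1/l_\alpha)$, so one uses the short-marking estimate (Theorem~\ref{thm:short-marking}) to bound $d_{Th}$ via the ratio $l_\beta(Y_{-t}^R)/l_\beta(Y_{-t}^L)$. The large $l_\beta$ in the denominator absorbs the logarithmically growing twist, yielding a bounded ratio. Your endpoint-cancellation remark does not rescue this: the width of $\In(Y,\alpha)$ at level $l_\alpha$ is independent of $X$, and one must allow $l_\alpha(X)$ to be arbitrarily small. You need to add a separate thin-part argument along these lines.
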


\noindent \textbf{Organization.} We begin by going through 
some background on the Thurston metric on Teichm\"{u}ller 
space, laminations and shearing co-ordinates, and Fenchel-Nielsen 
twisting co-ordinates. Section \ref{sec:infinitesimal-envelope} 
is dedicated to proving Theorem \ref{thm:stretch-vectors-extremal}, 
and in Section \ref{sec:bounded-width}, we 
prove Theorem \ref{thm:bounded-envelope} using some computations 
from subsection \ref{subsec:twist-compute}.

\noindent \textbf{Acknowledgements.} This paper is based on the
author's PhD thesis. I would like to thank my advisor, Kasra Rafi, who
has given me endless support and motivation throughout the writing of
this work. 

I would also like to thank my committee members: Alexander Nabutovsky
and Giulio Tiozzo. Many thanks to Jing Tao, Yair Minsky for helpful
discussion and to Francis Bonahon, who helped me with the proof of
Lemma \ref{lem:finite-stretch-vectors-span-hull}, and to Yi Huang, who
helped me clarify the proof of Theorem
\ref{thm:stretch-vectors-extremal}. Special thanks to David Dumas, for
reading and commenting on earlier drafts of this paper.

Finally, I would like to acknowledge support from the Univeristy of
Toronto mathematics department, the Faculty of Arts and Sciences, the
Natural Sciences and Engineering Research Council of Canada.

\section{Background} \label{sec:background}


\subsection{Teichmuller space \& Thurston Metric}

Throughout this paper, we let $S$ be an orientable surface with no
boundary components, and negative Euler characteristic.  In
particular, this means that $S$ can be endowed with a hyperbolic
metric. 

\begin{Definition}
  A \textbf{marking} on $S$ is a homeomorphism 
  $f:S \to X$, where $X$ is a surface endowed with a hyperbolic metric.
\end{Definition}

Two markings $f:S\to X$ and $g:S \to Y$ are equivalent 
if $fg^{-1}$ is homotopic to an isometry.

\begin{Definition}
  We define the \textbf{Teichm\"{u}ller space} of 
  $S$, denoted by $\mc{T}(S)$, to be the space 
  of all markings on $S$ up to equivalence.
\end{Definition}

We think of points in Teichm\"{u}ller space 
as a pair consisting of a metric space $X$, together with a marking 
map $\vphi$.

If $\gamma$ is a curve, arc, or arc segment on $S$, then we define
$l_{\gamma}:\mc{T}(S) \to \R_{\ge 0}$ by sending $X$ to
$l_{\gamma}(X)$, the \textbf{length} of the geodesic representative of
$\gamma$ on $X$ relative to its endpoints.

\begin{Definition}
We define the \textbf{Thurston metric} $d_{Th}:\mc{T}(S)\times
\mc{T}(S)\to \R_{\ge0}$ 
by:
\begin{align*}
  d_{Th}(X,Y) = \sup_{\alpha} \log\left(\frac{l_{\alpha}(Y)}{l_{\alpha}(X)}\right)
\end{align*}
where the supremum ranges over all simple closed curves $\alpha$ 
contained in $S$.
\end{Definition}

\begin{Lemma} \label{lem:dTh-defined}
  For any $X,Y\in \mc{T}(S)$, 
  $d_{Th}(X,Y)$ is equal to:
  \begin{itemize}
    \item $d_{Th}(X,Y) = \sup_{\alpha} \log\left(\frac{l_{\alpha}(Y)}{l_{\alpha}(X)}\right)$ (as above)
    \item $L(X,Y) = \inf_{f \sim id} \log(L_f)$, where $f:X\to Y$ is
      a Lipschitz map homotopic to the identity, and $L_f$ is the
      Lipschitz constant of $f$.
    \item $D(X,Y) = \inf_{f \sim id} \sup_{p\in X} \log(\|D
      f_p\|)$, where $f:X\to Y$ is a homeomorphism that 
      is once differentiable almost everywhere.
  \end{itemize}
\end{Lemma}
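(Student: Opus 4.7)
The plan is to establish the three-way equality by proving the cycle of inequalities $d_{Th}(X,Y) \le L(X,Y) \le D(X,Y) \le d_{Th}(X,Y)$. The first two are elementary, while the third is the substantive content, due originally to Thurston.

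For $d_{Th}(X,Y) \le L(X,Y)$, I would fix a Lipschitz map $f: X \to Y$ homotopic to the identity and, for each simple closed curve $\alpha$, note that the image under $f$ of the geodesic representative of $\alpha$ is a loop freely homotopic to $\alpha$ on $Y$ of length at most $L_f \cdot l_{\alpha}(X)$. Hence $l_{\alpha}(Y)/l_{\alpha}(X) \le L_f$; taking $\log$, supremum over $\alpha$, and then infimum over $f$ yields the inequality. For $L(X,Y) \le D(X,Y)$, I would use that any homeomorphism $f$ differentiable almost everywhere with $\|Df_p\| \le K$ a.e.\ is automatically $K$-Lipschitz: for a rectifiable path $\gamma$ in $X$, integrating $\|Df\|$ along $\gamma$ bounds the length of $f\circ\gamma$, and since hyperbolic surfaces are geodesic metric spaces we get $d_Y(f(p),f(q)) \le K\cdot d_X(p,q)$.

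The harder inequality $D(X,Y) \le d_{Th}(X,Y)$ requires actually constructing, for every $K > d_{Th}(X,Y)$, an a.e.-differentiable homeomorphism $f:X\to Y$ homotopic to the identity with $\sup_p \|Df_p\| \le K$. I would follow Thurston's strategy: first extend the length-ratio function from simple closed curves to all of $\ML(S)$, using continuity of length and density of weighted simple closed curves. Compactness of $\PML(S)$ then produces a geodesic lamination $\Lambda$ that (nearly) realizes the supremum. Completing $\Lambda$ to a maximal geodesic lamination decomposes both $X$ and $Y$ into ideal triangles, and the stretch map defined by uniform expansion along the transverse horocyclic foliation assembles into a map with $\|Df\| \equiv K$ almost everywhere.

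The main obstacle is producing this maximally stretched lamination and verifying that the piecewise stretch map glues continuously across leaves of $\Lambda$ while remaining homotopic to the identity. For the present paper, which uses this lemma only as background, the detailed construction can be cited from \cite{Thu86}, and the remainder of the exposition will focus on the two easier inequalities together with a sketch of the stretch-map construction, since stretch paths will play a central role in Section \ref{sec:infinitesimal-envelope}.
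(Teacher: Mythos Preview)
Your cycle of inequalities runs in the opposite direction from the paper's, and this is not merely cosmetic: your step $L(X,Y) \le D(X,Y)$ contains a genuine gap. You assert that a homeomorphism $f$ which is differentiable almost everywhere with $\|Df_p\| \le K$ a.e.\ is automatically $K$-Lipschitz, by ``integrating $\|Df\|$ along $\gamma$.'' This fails without an absolute-continuity hypothesis. The standard counterexample is $f(x) = x + c(x)$ on $[0,1]$, where $c$ is the Cantor function: $f$ is a homeomorphism onto $[0,2]$, differentiable a.e.\ with $f'(x)=1$ a.e., yet $f(1)-f(0)=2$, so $f$ is not $1$-Lipschitz. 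One can transplant such behaviour into a chart on $X$, so the class of maps competing in the definition of $D$ is genuinely larger (in the relevant sense) than you are treating it.

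The paper avoids this by reversing the two easy inequalities: it cites Thurston for $L = d_{Th}$ and then proves $d_{Th} \le D \le L$. The inequality $D \le L$ is obtained not by comparing the two infima class-by-class, but by observing that Thurston's \emph{optimal} Lipschitz map---the one realizing the infimum in $L(X,Y)$---happens to be a homeomorphism differentiable almost everywhere, hence is an admissible competitor for $D$, and its derivative norm is bounded by its Lipschitz constant. Your hard step $D \le d_{Th}$ and the paper's citation of $L = d_{Th}$ both ultimately rest on the same stretch-map construction from \cite{Thu86}; the difference is that the paper uses an additional regularity property of Thurston's extremal map to close the loop, whereas your route through $L \le D$ cannot be closed without strengthening the differentiability hypothesis in the definition of $D$.
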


\begin{proof}
  In \cite{Thu86}, Thurston shows that 
  $L(X,Y) = d_{Th}(X,Y)$. It suffices to show that 
  $d_{Th}(X,Y) \le D(X,Y) \le L(X,Y)$. The first 
  inequality follows because $\frac{l_{\alpha}(Y)}{l_{\alpha}(X)}$ 
  is always bounded above by $\sup_p \|Df_p\|$. The 
  latter inequality is more subtle. Trivially, 
  if in the definition of $L(X,Y)$, 
  $f$ is taken to be differentiable, then the inequality 
  follows by the fact that Lipschitz constants 
  give upper bounds for derivatives. In fact, 
  Thurston \cite{Thu86} explicitly 
  constructs a map $f$ that realizes 
  the infimum in $L(X,Y)$, and this map is a 
  homeomorphism that is differentiable almost 
  everywhere. Thus, the inequality holds.
\end{proof}

\begin{Claim}
  For any $X,Y,Z \in \mc{T}(S)$, the Thurston metric satisfies:
  \begin{itemize}
    \item $d_{Th}(X,Y) \ge 0$
    \item $d_{Th}(X,Y) = 0$ if and only if $X = Y$
    \item $d_{Th}(X,Z) \le d_{Th}(X,Y) + d_{Th}(Y,Z)$
  \end{itemize}
\end{Claim}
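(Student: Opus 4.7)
My plan is to invoke the three equivalent characterizations in Lemma \ref{lem:dTh-defined} and use whichever is most convenient for each property, noting that the metric is asymmetric so symmetry is not claimed.

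For the \textbf{triangle inequality}, I would use the $\sup$ definition directly. For any simple closed curve $\alpha$,
\begin{align*}
\log\!\left(\frac{l_{\alpha}(Z)}{l_{\alpha}(X)}\right)
= \log\!\left(\frac{l_{\alpha}(Y)}{l_{\alpha}(X)}\right) + \log\!\left(\frac{l_{\alpha}(Z)}{l_{\alpha}(Y)}\right)
\le d_{Th}(X,Y) + d_{Th}(Y,Z),
\end{align*}
and taking the supremum over $\alpha$ on the left yields the desired inequality. (Alternatively, one may compose Lipschitz maps: $g \circ f$ has Lipschitz constant at most $L_f L_g$, giving $L(X,Z) \le L(X,Y) + L(Y,Z)$.)

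For \textbf{non-negativity}, I would switch to the Lipschitz characterization $D(X,Y)$. A map $f: X \to Y$ that is differentiable almost everywhere with $\sup \|Df_p\| = L$ distorts area by at most $L^2$, so $\Area(Y) \le L^2 \Area(X)$. Since $X$ and $Y$ are hyperbolic structures on the same surface $S$, Gauss--Bonnet gives $\Area(X) = \Area(Y) = -2\pi\chi(S)$, forcing $L \ge 1$ and hence $\log L \ge 0$. Taking the infimum over such $f$ gives $d_{Th}(X,Y) \ge 0$.

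For the \textbf{identity of indiscernibles}, the forward direction is immediate: if $X = Y$ in $\mc{T}(S)$ there is an isometry homotopic to the identity realizing $L = 1$. The converse is the main obstacle and is where Thurston's theorem does the essential work. If $d_{Th}(X,Y) = 0$, then the infimum in $L(X,Y)$ is $0$; by the explicit optimal map constructed in \cite{Thu86} (cited in the proof of Lemma \ref{lem:dTh-defined}), this infimum is attained by a homeomorphism $f: X \to Y$ homotopic to the identity with $L_f = 1$. A $1$-Lipschitz homeomorphism between closed hyperbolic surfaces of equal area must be an isometry (any local area contraction would, by the area computation above, force some other region to be expanded, contradicting $L_f = 1$). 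Thus $f$ represents an equivalence of markings, so $X = Y$ in $\mc{T}(S)$.
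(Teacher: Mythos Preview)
Your proof is correct and follows essentially the same approach as the paper: both use the area/Gauss--Bonnet argument via the $D$-characterization for non-negativity, and both invoke the existence of Thurston's optimal map to deduce that $d_{Th}(X,Y)=0$ forces an isometry. The only minor difference is that you prove the triangle inequality directly from the length-ratio supremum, whereas the paper uses the chain rule on the $D$-characterization; your route is arguably cleaner here since it avoids any differentiability considerations.
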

\begin{proof}
  We use the third characterization of $d_{Th}$ in 
  Theorem \ref{lem:dTh-defined}. 
  \begin{itemize}
    \item If $d_{Th}(X,Y) < 0$, then that would imply the 
      existence of a map $f:X\to Y$ such that $\|Df_p\| < 1$ 
      for all $p$. In particular, this means that 
      $f$ is not area-preserving, which cannot happen, 
      as $\mathrm{Area}(X) = \mathrm{Area}(Y) = -2\pi \chi(S)$.
    \item If $d_{Th}(X,Y) = 0$, then there would be 
      a homeomorphism $f:X\to Y$ homotopic to the identity 
      with $\|Df_p\| = 1$ for all $p$. In particular, 
      $f$ must be an isometry, and $X$ and $Y$ are equivalent.
    \item This follows immediately from the chain rule.
  \end{itemize}
\end{proof}

The functions $d_{Th} = L = D$ define an 
asymmetric complete geodesic metric on $\mc{T}(S)$ \cite{Thu86}. By 
this, we mean that:
\begin{Theorem} 
  For any $X,Y,Z\in \mc{T}(S)$, 
  \begin{itemize}
    \item $d_{Th}(X,Y) \ge 0$ for all $X,Y$, 
      with equality if and only if $X=Y$
    \item $d_{Th}(X,Y) \le d_{Th}(X,Z) + d_{Th}(Z,Y)$
    \item There exists a path $\gamma:[0,d_{Th}(X,Y)]\to \mc{T}(S)$ 
      such that $\gamma(0)=X$, $\gamma(d_{Th}(X,Y)) = Y$, and 
      for any $s\le t$, $d_{Th}(\gamma(s),\gamma(t)) = t-s$.
  \end{itemize}
\end{Theorem}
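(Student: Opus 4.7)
The first two bullets are already established by the preceding Claim: the non-negativity, the definiteness, and the triangle inequality all follow from the characterizations in Lemma~\ref{lem:dTh-defined} (together with the observation that a map with Lipschitz constant $<1$ would contract area, contradicting the Gauss--Bonnet equality $\Area(X) = \Area(Y) = -2\pi\chi(S)$). So the only new content is the existence of a $d_{Th}$-geodesic between any two points, and my plan is to reduce this to Thurston's stretch path construction from \cite{Thu86}.

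The plan is to invoke the following two ingredients from Thurston's paper. First, for every $X\in \mc{T}(S)$ and every complete geodesic lamination $\lambda$, there is a canonically defined \emph{stretch path} $t\mapsto \mathrm{stretch}^t_\lambda(X)$, obtained by scaling the horocyclic foliation on the ideal triangles complementary to $\lambda$ by a factor $e^t$; this map is an isometric embedding $[0,\infty)\hookrightarrow (\mc{T}(S),d_{Th})$, i.e., it is parameterized by $d_{Th}$-arc length. Second, given any pair $X,Y$, Thurston produces an extremal Lipschitz map $f:X\to Y$ realizing the infimum in the definition $L(X,Y)=d_{Th}(X,Y)$, and identifies a \emph{maximally stretched} geodesic lamination $\mu(f)\subset S$ on which $f$ multiplies arc length by exactly $e^{d_{Th}(X,Y)}$.

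Given these two facts, I would construct $\gamma$ as follows. Complete $\mu(f)$ to a maximal lamination $\lambda$ and follow the stretch path $\mathrm{stretch}^t_\lambda(X)$. If this stretch path already terminates at $Y$ for $t=d_{Th}(X,Y)$, we are done. Otherwise one iterates: at the first failure point $X_1$, the (new) maximally stretched lamination of an extremal map $X_1\to Y$ must contain the previous one, and concatenating a finite sequence of stretch segments (using the fact that each step strictly enlarges the maximally stretched lamination, and laminations on $S$ form a compact object with bounded complexity) yields a piecewise-stretch path from $X$ to $Y$ whose total length equals $d_{Th}(X,Y)$. Reparameterizing by arc length gives the desired $\gamma:[0,d_{Th}(X,Y)]\to \mc{T}(S)$; the equality $d_{Th}(\gamma(s),\gamma(t))=t-s$ for $s\le t$ then follows from the triangle inequality together with the length of the path being exactly $d_{Th}(X,Y)$.

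The main obstacle is not the concatenation, which is essentially formal, but rather the two inputs from Thurston, namely the existence of an extremal Lipschitz map with nonempty maximally stretched lamination, and the fact that the stretch along such a lamination is $d_{Th}$-isometric. Both are genuinely nontrivial compactness-and-maximality arguments in \cite{Thu86}, but since the present section only seeks to record the geodesic property, I would cite them rather than reprove them, noting that the horocyclic-foliation computation underlying the stretch path isometry will be recalled in detail in Section~\ref{sec:infinitesimal-envelope} when we analyze stretch vectors.
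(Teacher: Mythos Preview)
Your proposal is correct and matches the paper's approach: the first two items are handled by the preceding Claim, and the geodesic existence is deferred to Thurston's stretch-path construction in \cite{Thu86}. In fact the paper gives no proof of this Theorem at all beyond that citation, so your sketch of the concatenation argument (Theorem~8.5 of \cite{Thu86}) is already more detail than the paper provides.
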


For any simple closed curve $c$, and any $v\in T_X\mc{T}(S)$, 
we define $D_v\log l_c = \frac{d}{dt}|_{t=0} \log(l_c(\alpha(t)))$, 
where $\alpha(t)$ is some germ whose $1$-jet is equal to $v$. This 
family of linear functionals induces the Thurston norm on
$T_X\mc{T}(S)$:
\begin{align*}
  \|v\|_{Th} = \sup_{c} d\log_c(v)
\end{align*}

The Thurston metric is induced by this norm on the tangent bundle
\cite{Thu86}, and hence is a Finsler metric. 

We define the \textbf{unit norm sphere} at $X\in \mc{T}(S)$ by:
\begin{align*}
  \bm{S}_X = \{v \in T_X\mc{T}(S) : \|v\|_{Th} = 1\}
\end{align*}
Since the Thurston metric is induced by the 
Thurston norm, we think of $\bm{S}_X$ as the set of tangent vectors 
which arise from $1$-jets of $C^1$ geodesics starting at $X$. 
By claim 1.12 and Theorem 1.11 of \cite{PW22}, 
$\bm{S}_X$ consists of all unit tangent vectors at $X$.

More details on $d_v\log_{\alpha}$ and the characterization of 
the unit sphere can be found in subection \ref{subsec:stretch-vectors} 
and in \cite{Thu86, HOP21, DLRT20}.

Throughout this paper, we will work with two different co-ordinate 
systems for Teichm\"{u}ller space: Fenchel-Nielsen co-ordinates, 
and shearing co-ordinates. We review them in this 
section, following \cite{Mar16} for Fenchel-Nielsen co-ordinates, and 
following \cite{BBFS09} for shearing co-ordinates. For a more 
generalized overview of shearing co-ordinates in the case 
of a filling lamination that is not an ideal triangulation, 
we refer the reader to \cite{Ther14}.

\subsection{Shearing Co-ordinates \& Laminations}
Throughout this section, let $(X,\vphi)$ 
be a point in $\mc{T}(S)$. Let $\tilde X$ 
be the universal cover of $X$, which we will identify 
with $\HH^2$. A \textbf{geodesic lamination} $\lambda$ on 
$X$ is a closed subset of $X$ which can we decomposed 
as a disjoint union of (possibly bi-infinite) 
geodesics. If $(Y,\psi)$ is a different marking on $S$, 
then $\psi\vphi^{-1}(\lambda)$ gives a closed collection 
of disjoint arcs on $Y$, which we can turn into 
geodesics by an ambient isotopy, and hence, we can think of 
$\lambda$ as a lamination on 
$Y$. In this manner, $\lambda$ can be thought of as a 
lamination on the underlying surface $S$ without 
specifying a metric.

A geodesic lamination is called \textbf{complete} 
if its complementary components are triangles. A lamination 
$\lambda$ can be lifted to a lamination $\tilde \lambda$ 
on $\tilde X$, so $\lambda$ is complete if and only if 
$\tilde \lambda$ is a triangulation of $\HH^2$. A 
lamination $\lambda$ is called \textbf{chain-recurrent} 
if there exists a sequence of multicurves which 
converge to $\lambda$ in the Hausdorff topology. An 
alternate characterization of chain-recurrence is 
the following:

\begin{Definition}
  A geodesic lamination $\lambda$ is called \textbf{chain-recurrent} 
  if for any arc segment $I$ contained in $\lambda$, and for 
  any $\eps > 0$, there exists a geodesic simple closed curve $\alpha$
  in $S$ and an arc segment $J \subset \alpha$ 
  such that $d_H(I,J) < \eps$, where $d_H$ is the Hausdorff 
  distance.
\end{Definition}

In this sense, the space of chain-recurrent laminations 
is the closure of the space of multicurves, 
equipped with Hausdorff convergence \cite{DLRT20}. 

A \textbf{measured lamination} is a lamination 
$\lambda$ together with a measure, $\mu_{\lambda}$, defined on all 
arc segments intersecting $\lambda$ transversely 
whose endpoints lie on $\lambda$. We also require 
$\mu$ to be invariant under homotopy that moves 
the endpoints of arc segments along leaves of $\lambda$. 

As an example, consider a \textbf{weighted multicurve} -- 
a disjoint union of simple closed curves $\gamma_n$ with 
nonzero weights $a_n$. For any arc segment $I$, 
$\mu(I) = \sum_n a_n i(\gamma_n,I)$.

\begin{Definition}
  The space of projective measured laminations, $\PML(S)$, 
  is the space of all measured laminations up to 
  scaling the measure by a positive real number. 
\end{Definition}

We topologize $\PML(S)$ in the following manner: We 
say that $\lambda_n$ converge to $\lambda$ 
if for every arc segment $I$ in $S$, 
$\mu_{\lambda_n}(I)$ converges to $\mu_{\lambda}(I)$.

The following is useful for working with 
$\PML(S)$, and appears in Chapter 1 of \cite{HP92}:
\begin{Proposition}
  $\PML(S)$ is compact, and moreover, is the compactification of 
  the space of weighted multicurves with total weight $1$.
\end{Proposition}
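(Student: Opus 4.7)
The plan is to embed $\PML(S)$ into an infinite-dimensional projective space coordinatized by intersection numbers with simple closed curves, and then establish compactness and density of weighted multicurves separately; this is the classical Thurston approach.

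First, I would define the map $\Phi: \ML(S) \to \R_{\geq 0}^{\mathcal{SC}}$ by $\Phi(\lambda) = (i(\lambda, c))_{c \in \mathcal{SC}}$, where $\mathcal{SC}$ is the countable set of isotopy classes of simple closed curves on $S$. This descends to a map $\bar\Phi : \PML(S) \to \mathbb{P}(\R^{\mathcal{SC}})$. Continuity of $\bar\Phi$ is built into the topology already defined on $\PML(S)$, since simple closed curves are special cases of arc segments. Injectivity is the main technical point: one must show that the collection $\{i(\lambda, c) : c \in \mathcal{SC}\}$ determines $\lambda$ as a measured lamination. I would prove this by approximating $\lambda$ by simple closed curves in the Hausdorff topology (using chain-recurrence) to recover the support, and by using finite-dimensional train-track charts to recover the transverse measure from finitely many intersection numbers.

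Next, for compactness, I would fix a filling finite collection $F = \{c_1, \dots, c_N\}$ of simple closed curves (a pants decomposition together with its dual curves, say). A standard combinatorial estimate bounds $i(\lambda, c)$ for any simple closed curve $c$ linearly in the coordinates $(i(\lambda, c_j))_{j=1}^N$ with constants depending only on the combinatorics of $c$ relative to $F$. After projectivizing by normalizing $\sum_j i(\lambda, c_j) = 1$, the image $\bar\Phi(\PML(S))$ sits inside a compact subset of $\mathbb{P}(\R^{\mathcal{SC}})$; closedness under simultaneous limits of intersection numbers then gives compactness.

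Finally, for density, I would carry an arbitrary $\lambda$ on a train track $\tau$, so that the transverse measure becomes a nonnegative real weight vector on the branches of $\tau$ satisfying the switch conditions. The admissible weights form a rational polyhedral cone in $\R^{E(\tau)}_{\geq 0}$, whose rational points are dense and correspond precisely to weighted multicurves carried by $\tau$. Rescaling a sequence of rational approximants to total weight $1$ yields a sequence of weighted multicurves converging to $[\lambda]$ in $\PML(S)$. The main obstacle throughout is the injectivity of $\bar\Phi$; train-track coordinates are the cleanest route, as they reduce the determination of the transverse measure to a finite linear-algebraic problem expressible in terms of intersection numbers with a few explicit simple closed curves.
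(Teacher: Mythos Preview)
The paper does not actually prove this proposition: it is stated as a background fact with the sentence ``The following is useful for working with $\PML(S)$, and appears in Chapter 1 of \cite{HP92}'' and no further argument. So there is no in-paper proof to compare against; the author simply defers to the literature.

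Your outline is essentially the classical Thurston argument one finds in the cited reference (Fathi--Laudenbach--Po\'enaru / Penner--Harer): embed $\PML(S)$ into $\mathbb{P}(\R^{\mathcal{SC}})$ via intersection numbers, use a finite filling system and train-track charts to control the image and establish compactness, and use rationality of the switch-condition cone to get density of weighted multicurves. This is correct and is exactly what the citation is pointing to. One small remark: your claim that continuity of $\bar\Phi$ is ``built into the topology'' needs a short justification, since the paper topologizes $\PML(S)$ using transverse measures on arc segments rather than intersection numbers with closed curves; but intersection with a simple closed curve is a finite sum of such arc measures once one fixes a transversal, so this is routine.
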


We next explicitly describe shearing co-ordinates for 
a class of laminations that will be of interest 
later in this thesis.

Let $\lambda = \cup_{i=1}^{9g-9} \lambda_i$ be a complete geodesic
lamination consisting of $3g-3$ leaves which are simple closed curves
and $6g-6$ bi-infinite leaves.  Denote by $\mc{C}$ the closed leaves
of $\lambda$.  The leaves of $\lambda$ give an ideal triangulation of
$S$, and $\mc{C}$ form a pair-of-pants decomposition of $S$.

We wish to define a family of functions 
$S_{\lambda_i}: \mc{T}(S) \to \R$, which will give a 
co-ordinate system on $\mc{T}(S)$. To do this, 
we first define \textbf{shearing between 
triangles}.

Let $\Delta_1$ and $\Delta_2$ be two triangles in $S\ssm \lambda$, 
and let $\tilde\Delta_1$ and $\tilde\Delta_2$ be lifts of $\Delta_1$ 
and $\Delta_2$ to the universal cover $\tilde X = \HH^2$. Choose 
some geodesic $\gamma$ separating the interiors of $\tilde \Delta_1$ 
and $\tilde \Delta_2$, such that $\gamma$ is the geodesic 
between a vertex $v_1$ of $\tilde \Delta_1$ and to a vertex 
$v_2$ of $\tilde \Delta_2$ (if $\Delta_1$ and 
$\Delta_2$ are adjacent triangles on $S$, choose adjacent lifts, and 
let $\gamma$ be their intersection). 
We orient $\gamma$ so that the interior of $\Delta_1$ lies to 
the left of $\gamma$. 

An ideal triangle in $\HH^2$ has a well-defined 
incenter, incircle, and three distinguished medians 
along its edges, given by the intersection of 
the incircle with the edges. For each triangle 
$\tilde \Delta_i$, we define $m^i$ as the median of $\tilde \Delta_i$ 
lying on the edge separating $\gamma$ from 
$int(\tilde \Delta_i)$. 

Let $\vphi_i$ be the orientation-preserving 
parabolic isometry of $\HH^2$ fixing $v_i$ 
and sending $\gamma$ to the edge of $\tilde \Delta_i$ containing 
$m^i$. We set $q_i = \vphi^{-1}(m^i)$, 
and define $s(\Delta_1,\Delta_2)$ 
to be the signed distance between $q_1$ and 
$q_2$, where the sign is inherited from the 
orientation of $\gamma$.

In \cite{BBFS09}, it is proven that $s(\Delta_1,\Delta_2)$ 
is symmetric and does not depend on the geodesic $\gamma$. However, 
$s(\Delta_1,\Delta_2)$ as defined above 
may depend on the choice of lifts of $\Delta_1$ 
and $\Delta_2$, unless $\Delta_1$ and 
$\Delta_2$ are adjacent. 
\begin{figure}
  \begin{center}
    \begin{tikzpicture} [scale = 3]
      \draw[thick,<->] (-1.5,0) -- (1.5,0);
      \draw[thick,->] (0,0) -- (0,2.5);
      \draw[thick] (0.5,0) -- (0.5,2.5);
      \draw[thick] (-1,0) -- (-1,2.5);
      \draw[thick] (0,0) arc (0:180:0.5);
      \draw[thick] (0.5,0) arc (0:180:0.25);
      \draw[thick, dashed] (0.25,0.5) circle (0.25);
      \draw[thick, dashed] (-0.5,1) circle (0.5);

      \node at (-0.5, 2) {$\tilde\Delta_1$};
      \node at (0.25, 2) {$\tilde\Delta_2$};
      \node at (0.1,2.5) {$\tilde \gamma$};

      \draw[thick,|-|] (0,1) -- (0,0.5);
      \draw[thick,->] (-0.5, 1) -- (0,0.75);
      \node[fill=white] at (-0.5,1) {$s(\Delta_1,\Delta_2)$};
    \end{tikzpicture}
  \end{center}
  \caption{The set-up for computing shearing co-ordinates 
  between two adjacent triangles.}
\end{figure}
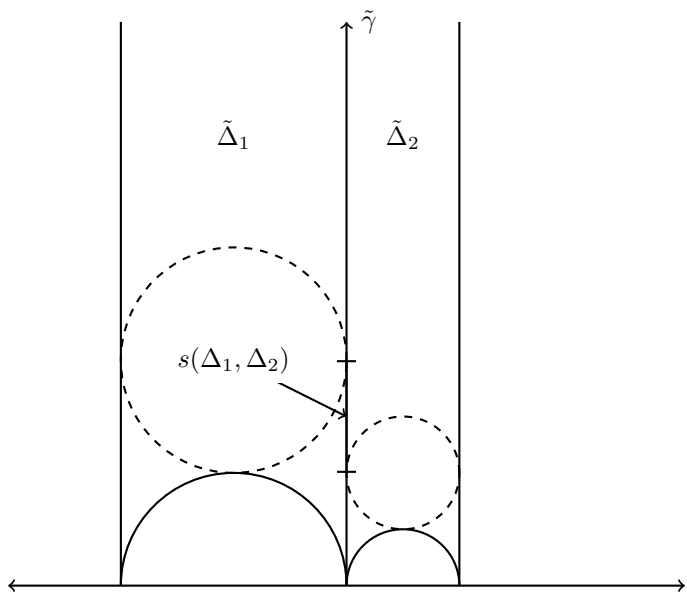

We are now ready to define $s_{\lambda_i}(X) = s(\Delta_1,\Delta_2)$, 
where $\Delta_i$ are chosen as follows:

\begin{itemize}
  \item If $\lambda_i$ is an infinite leaf of $\lambda$, 
    take $\Delta_1$ and $\Delta_2$ are the 
    (possibly coinciding) triangles adjacent to $\lambda_i$
  \item If $\lambda_i\in\mc{C}$, 
    we pick $\Delta_1$ and $\Delta_2$ to be 
    two triangles which asymptote to $\lambda_i$ 
    from both sides of $\lambda_i$.
\end{itemize}

\begin{Lemma}
  Let $c\in\mc{C}$, and let 
  $\lambda_c\subset \lambda$ be the sublamination of 
  $\lambda$ consisting of leaves converging to $c$. 

  Then $s_{c}(X)$ is well-defined 
  up to $\Z[\{s_{\lambda_c}\}]$.
\end{Lemma}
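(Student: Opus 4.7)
The plan is to isolate the source of ambiguity in $s_c(X)$ and then show that any shift it produces lies in $\Z[\{s_{\lambda_i}\}_{\lambda_i \subset \lambda_c}]$.

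First, I would fix a lift $\tilde c \subset \HH^2$ of $c$ and let $T_c$ be the hyperbolic deck transformation whose axis is $\tilde c$ and whose translation length is $l_c(X)$. Any triangle $\Delta_j$ asymptoting to $c$ from its chosen side lifts to infinitely many triangles in $\HH^2$, each with one ideal vertex on $\tilde c$, and any two such lifts on the same side of $\tilde c$ differ by a power of $T_c$. Other sources of indeterminacy in $s(\Delta_1,\Delta_2)$, namely the choice of the separating geodesic $\gamma$ and the choice of $\tilde c$ itself, either do not affect $s(\Delta_1,\Delta_2)$ by the results of \cite{BBFS09} cited above, or are absorbed into the $T_c$-ambiguity via deck-group equivariance.

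Next I would unwind the definition of $s(\Delta_1,\Delta_2)$ to see that replacing $\tilde\Delta_1$ by $T_c\tilde\Delta_1$ conjugates the parabolic $\vphi_1$ by $T_c$, translating $q_1$ along $\gamma$ by exactly the translation length of $T_c$; the analogous replacement of $\tilde\Delta_2$ shifts in the opposite direction. Hence the total ambiguity in $s_c(X)$ is exactly the cyclic subgroup $l_c(X)\cdot\Z\subset\R$, and it suffices to show that
\begin{equation*}
l_c(X) \;\in\; \Z[\{s_{\lambda_i}(X) : \lambda_i \subset \lambda_c\}].
\end{equation*}

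For this I would enumerate the triangles on one side of $\tilde c$ with an ideal vertex on $\tilde c$ as a bi-infinite chain $\ldots, \tilde\Delta^{(0)}, \tilde\Delta^{(1)}, \ldots$, where consecutive triangles in the chain are adjacent across lifts of the leaves of $\lambda_c$ that spiral into $c$ on that side. The isometry $T_c$ permutes this chain with a single fundamental cycle; summing the elementary shearings between consecutive triangles across one such cycle recovers the translation length $l_c(X)$, and since each leaf of $\lambda_c$ on the chosen side contributes exactly one elementary shearing $\pm s_{\lambda_i}(X)$, this expresses $l_c(X)$ as an integer combination of the $s_{\lambda_i}(X)$, as desired.

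The main obstacle is the last step: organizing the triangle enumeration and the $T_c$-action carefully enough to extract the integer identity for $l_c(X)$, rather than merely an $\R$-valued shift. This is the closed-leaf analogue of the standard ``length equals sum of shearings'' formula for a pants curve in shearing coordinates, and follows the argument of \cite{BBFS09}.
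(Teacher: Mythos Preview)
The paper's own proof is a one-line citation to Lemma~3.1 of \cite{BBFS09}; what you have written is essentially an unpacking of that reference, and the overall strategy---reduce the ambiguity to shifts along $\tilde c$ and then express those shifts as integer combinations of the shearings of leaves spiralling into $c$---is the correct one.

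There is, however, one genuine oversight. You assert that ``the total ambiguity in $s_c(X)$ is exactly the cyclic subgroup $l_c(X)\cdot\Z$.'' This would be true if the only choice in the definition were the choice of lift of a \emph{fixed} pair $(\Delta_1,\Delta_2)$. But the definition of $s_c$ also allows one to replace $\Delta_1$ (or $\Delta_2$) by a \emph{different} triangle asymptoting to $c$ on the same side. In your own chain enumeration, passing from $\tilde\Delta^{(k)}$ to the adjacent $\tilde\Delta^{(k+1)}$ shifts $q_1$ by a single $\pm s_{\lambda_i}$, not by a full $l_c(X)$. So the ambiguity is the full lattice $\Z[\{s_{\lambda_i}:\lambda_i\subset\lambda_c\}]$ rather than the cyclic subgroup generated by $l_c(X)$; your identity $l_c(X)=\sum_i \pm s_{\lambda_i}(X)$ shows the latter is contained in the former, but not conversely. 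The fix is immediate with the tools you already have: treat a change of triangle on one side as moving a partial distance along your fundamental cycle, and observe that each step contributes exactly one $\pm s_{\lambda_i}$. Once you add this sentence, your argument is complete and matches the content of the reference the paper cites.
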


\begin{proof}
  This follows from Lemma 3.1 of \cite{BBFS09}.
\end{proof}

Choosing prescribed 
lifts for every triangle in $S\ssm\lambda$, 
we get a well-defined \textbf{shearing co-ordinates} map
$\mc{T}(S_g) \to \R^{9g-9}$ given by: 
\begin{align*}
  X \to \{s_l\}_{l\subset \lambda \textrm{ is a leaf}}
\end{align*}

The shearing co-ordinate map does not give an isomorphism, 
as for any $\lambda$ and $X$, the collection 
$\{s_{\lambda_i}(X)\}$ is linearly dependent (see 
Lemma 3.2 in \cite{BBFS09}). Choosing a 
linearly independent subset of these gives a 
homeomorphism from $\mc{T}(S_g)$ to $\R^{6g-6}$ 
\cite{BBFS09}.

\subsection{Twisting Co-ordinates}

Let $\mc{C}$ be a pair-of-pants decomposition 
of a surface $S$. In the same way as in our 
definition for the shearing co-ordinates, 
it will be convenient to choose prescribed 
lifts of every $c\in \mc{C}$ in $\tilde X$. For any 
closed leaf $c\in \mc{C}$, let $P_1(c)$ and $P_2(c)$ 
be the (possibly same) pairs of pants 
adjacent to $c$. Let 
$\tilde c$ be a lift of $c$, and choose 
lifts, $\tilde P_i(c)$ of $P_i(c)$ which are adjacent 
to $\tilde c$ on either side of $\tilde c$.

Let $c\neq c_1$ and $c \neq c_2$ be cuffs of $P_1(c)$ and $P_2(c)$ 
respectively, and let $\tilde c_i$ be the lifts of $c_i$ 
bounding $\tilde P_i(c)$.

Let $\eta_i$ be the unique simple geodesic 
segment intersecting $\tilde c_i$ with start point on $\tilde c$ and 
end point on $\tilde c_i$, which is normal 
to both $\tilde c$ and $\tilde c_i$. Let $p_i(c)$ 
denote the start-point of $\eta_i$. 

We define the \textbf{twist co-ordinate relative to $c$}, 
$\tau_c(X)$ to be the signed distance between $p_1(c)$ and $p_2(c)$, 
so that the sign is \textit{positive} if we turn \textit{left} 
to get from $p_1$ to $p_2$. Choosing different 
lifts of $c$ will result in twist co-ordinates 
that differ by an integer multiple of $l_c(X)$.

Additionally, it can be shown that choosing 
different curves $c_i$ in the pair-of pants 
will change $p_i$ by 
a Gaussian-integer multiple of $l_c$, and 
hence, $\tau_c(X)$ is well defined up to 
an integer multiple of $l_c(X)$.

The map $FN:\mc{T}(S) \to \R^{6g-6}$ 
given by $X \to \{l_{c}(X),\tau_{c}(X)\}_{c\in\mc{C}}$ 
is a homeomorphism \cite{Mar16}, so we call 
this map the \textbf{Fenchel-Nielsen co-ordinate} system 
on $\mc{T}(S)$.

\subsection{Stretch Laminations, Chain-Recurrence, and Thurston 
Geodesics}

\begin{Definition}
  Let $I$ be a possibly infinite closed interval. 
  A \textbf{forward} (resp. \textbf{backwards}) 
  geodesic is a map $\gamma:I\to M$ 
  satisfying $d_{Th}(\gamma(s),\gamma(t)) = t-s$ 
  (resp. $d_{Th}(\gamma(t),\gamma(s)) = t-s$) 
  for all $t\ge s$.

  If $I=[a,b]$ is finite, we say that $\gamma$ 
  starts at $\gamma(a)$ and ends at $\gamma(b)$.
\end{Definition}

Throughout this thesis, a ``geodesic'' means a 
forward geodesic, unless otherwise stated.

Given $X,Y\in \mc{T}(S)$, we define 
\begin{align*}
  \Outenv(X) &= 
  \{Z\in \mc{T}(S) : Z \textrm{ lies on a forward geodesic 
  starting at } X\}\\ 
  \In(Y) &= 
  \{Z\in \mc{T}(S) : Z \textrm{ lies on a backwards geodesic 
  starting at } Y\}
\end{align*}

For $X,Y\in \mc{T}(S)$, we define the 
\textbf{geodesic envelope}, $\Env(X,Y) = \Outenv(X) 
\cap \In(Y)$. 

Given $X,Y\in\mc{T}(S)$, we can consider a sequence of multicurves
$\alpha_i$ such that $d_{Th}(X,Y) = \lim_{i\to\infty}
\frac{l_{\alpha_i}(Y)}{l_{\alpha_i}(X)}$. The space of geodesic
laminations equipped with the Hausdorff topology is compact
\cite{BZ05}, so up to subsequence, $\alpha_i$ converges to some
geodesic lamination $\lambda$. It turns out \cite{Thu86} that the
union of all Hausdorff limits of subsequences of $\{\alpha_i\}_n$ is
itself a geodesic lamination, call it $\lambda_{\{\alpha_i\}}$.
Moreover, if $\alpha'_i$ is another sequence whose length ratio
converges to $d_{Th}(X,Y)$ then any Hausdorff limit of $\{\alpha'_i\}$
is disjoint from $\lambda_{\{\alpha_i\}}$. 

Thus, it makes sense to define the \textbf{maximally stretched
lamination}, $\Lambda(X,Y)$ as the union of all Hausdorff limits of
sequences of multicurves whose length ratio converges to
$d_{Th}(X,Y)$.

Given $X_0\in \mc{T}(S)$, and any completion of a 
maximal chain-recurrent lamination $\lambda$, 
there exists an analytic 1-parameter 
family of metrics $X_t = \textrm{Stretch}(X_0,\lambda,t) 
\subset \mc{T}(S)$ with the following properties:
\begin{itemize}
  \item $l_{\lambda}(X_t) = e^tl_{\lambda}(X_0)$
  \item For $0\le s\le t$, $d_{Th}(X_s,X_t) = t-s$
  \item $\Lambda(X_s,X_t) = \lambda$
\end{itemize}

This family of metrics on $S$ is called the \textbf{Thurston Stretch
Path} associated to $X_0$ and $\lambda$. In particular, when
$\Lambda(X,Y)$ is maximal amongst all chain-recurrent laminations,
there is a unique geodesic from $X$ to $Y$, and the points along it
are precisely the points in $\Env(X,Y)$ \cite{DLRT20}.

\begin{Remark}
  Let $X_t$ be some smooth $1$-parameter family of 
  surfaces in $\mc{T}(S)$.

  The maps $s_{\lambda}:X_t \to \R^{9g-9}$ and 
  $FN:X_t \to \R^{6g-6}$ are only well-defined 
  up to the choices of lifts in their construction. However, 
  the maps $\dot s_{\lambda}:\mc{T}(S) \to \R^{9g-9}$ 
  and $\dot{FN}:X_t \to \R^{6g-6}$ defined by 
  postcomposition of $s_{\lambda}$ and $FN$ 
  by differentiation with respect to $t$ are well 
  defined.
\end{Remark}

Let $\lambda$ be some chain-recurrent lamination, 
and let $X,Y\in \mc{T}(S)$. We 
define
\begin{align*}
  \Outenv(X,\lambda) &= 
  \{Z\in \Outenv(X) : \lambda \subset \Lambda(X,Z)\}\\
  \In(Y, \lambda) &= 
  \{Z\in \In(Y) : \lambda \subset \Lambda(Y,Z)\}
\end{align*}

By this definition, $\Env(X,Y) = \Outenv(X,\Lambda(X,Y))
\cap \In(Y,\Lambda(X,Y))$. The geodesic envelope can be
thought of as a $1$-parameter family of cross-sections, where at any
time $t>0$, $\Env_t(X,Y)$ consists of all points of distance
$t$ from $X$ lying along geodesics from $X$ to $Y$.

\begin{Definition}
  Let $X,Y\in \mc{T}(S)$. We define the \textbf{width} 
  of the envelope $\Env(X,Y)$ as:
  \begin{align*}
    w(X,Y) &= \sup_{g_1,g_2 \mc{G}(X,Y)} \sup_{t\in [0,d_{Th}(X,Y)]} 
    d_{Th}(g_1(t), g_2(t))\\
    &= \sup_t \textrm{Diam}(\textrm{Env}_t(X,Y))
  \end{align*}
\end{Definition}


\section{The Infinitesimal Envelope}
\label{sec:infinitesimal-envelope}

The goal of this section is to understand the infinitesimal 
structure of geodesic envelopes in the Thurston metric. 

We begin the section by proving Theorem \ref{thm:stretch-vectors-extremal},
which will be one of the main ingredients in the arguments employed in
Section \ref{sec:bounded-width} to prove Theorem \ref{thm:bounded-envelope}

We continue by examining $\Env_0(X,Y)$ 
when $\Lambda(X,Y)$ contains a pair-of-pants decomposition of $S$. In
this case, the envelope width is determined by twist parameters along
geodesics. This will then allow us to estimate the maximal and minimal
twisting within $\Env(X,Y)$, which we will use in
Section \ref{sec:bounded-width}. 

\subsection{Stretch Vectors in the Envelope}
\label{subsec:stretch-vectors}

In this section, we study the set of tangent vectors in $T_X\mc{T}(S)$
which arise from $1$-jets of $C^1$ geodesics starting at some point
$X\in \mc{T}(S)$.  This set, called the \textbf{unit norm sphere}, and
denoted by $\bm{S}_X$ was shown in \cite{HOP21} to have
a combinatorial structure of a convex body with a convex
stratification whose faces come from chain-recurrent laminations 
in $S$. If $v\in \bm{S}_X$ is 
a $1$-jet of a stretch path $\mathrm{Stretch}(X,\lambda,t)$, we call
it a \textbf{stretch vector with respect to $\lambda$}, and 
denote it by $v = v_{\lambda}(X)$.

The following lemma appears in various parts of \cite{Thu86} 
and \cite{HOP21}. We adapt it to our language:
\begin{Lemma}
  For any $X\in \mc{T}(S)$, $\bm{S}_X$ is a topological sphere around
  $0 \in T_X\mc{T}(S)$. 
\end{Lemma}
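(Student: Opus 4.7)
The plan is to recognize $\bm{S}_X$ as the unit level set of an asymmetric, Finsler-type norm on the finite-dimensional vector space $T_X\mc{T}(S)$, and then invoke the standard principle that the unit level set of any continuous, positively $1$-homogeneous, strictly positive function on a finite-dimensional real vector space is homeomorphic to a round sphere centered at the origin.

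First I would verify that $\|\cdot\|_{Th}$ is finite and continuous on $T_X\mc{T}(S)$. Each assignment $v \mapsto D_v \log l_c$ is a continuous linear functional, and by the standard continuous extension of length functions from simple closed curves to measured laminations one obtains a continuous family of linear functionals $\{D_v\log l_\mu\}$ indexed by $[\mu] \in \PML(S)$. Since $\PML(S)$ is compact, $\|v\|_{Th} = \sup_{[\mu] \in \PML(S)} D_v \log l_\mu$ is a supremum of continuous linear functions over a compact parameter space, hence finite, convex, and continuous in $v$. Positive $1$-homogeneity, $\|tv\|_{Th} = t\|v\|_{Th}$ for $t \ge 0$, is immediate from the linearity of each individual functional in $v$.

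The delicate point is strict positivity: $\|v\|_{Th} > 0$ for every $v \neq 0$. If $\|v\|_{Th} = 0$, then along any germ $Y_t$ with $\dot Y_0 = v$ every ratio $l_c(Y_t)/l_c(X)$ has non-positive derivative at $t=0$; combining this with the analogous consideration for $-v$, or with the classical fact that finitely many length functions of simple closed curves give smooth local coordinates on $\mc{T}(S)$ and therefore separate tangent vectors, forces $v=0$, a contradiction.

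With these three properties in hand, fix any auxiliary inner product on $T_X\mc{T}(S)$ with round unit sphere $\Sigma$; the radial map $\Phi: \Sigma \to \bm{S}_X$ defined by $\Phi(u) = u/\|u\|_{Th}$ is a continuous bijection from a compact space to a Hausdorff space, hence a homeomorphism, so $\bm{S}_X$ is a topological sphere enclosing $0$. The main obstacle is the strict-positivity step, which is subtle precisely because $\|\cdot\|_{Th}$ is asymmetric: vanishing of the supremum in the direction $v$ alone does not preclude non-trivial infinitesimal motion, so one must invoke the separating property of length functions (or equivalently the non-degeneracy of the Thurston metric) to close the argument.
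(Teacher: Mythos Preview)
The paper does not supply a proof of this lemma; it attributes the statement to \cite{Thu86} and \cite{HOP21} and moves on. So there is no in-paper argument to compare against.

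Your outline is the standard one and is correct in structure: $\bm{S}_X$ is the unit level set of a continuous, positively $1$-homogeneous, strictly positive functional on a finite-dimensional vector space, and radial projection onto an auxiliary round sphere gives the homeomorphism. The continuity (via compactness of $\PML(S)$) and homogeneity steps are fine.

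The one place your write-up is genuinely loose is the strict-positivity step, and you correctly flag it as the crux. Your first suggestion---``the analogous consideration for $-v$''---does not work: $\|v\|_{Th}=0$ says nothing about $\|-v\|_{Th}$, so you cannot force all $D_v\log l_c$ to vanish by a symmetry argument. Your second suggestion---that finitely many $l_c$ give smooth local coordinates---only yields some $c$ with $D_v\log l_c\neq 0$, not one with $D_v\log l_c>0$, which is what you actually need. The cleanest fix is the one you allude to in your last parenthetical: invoke directly Thurston's result in \cite{Thu86} that $d_{Th}$ is a genuine asymmetric Finsler metric, which is exactly the statement that $\|v\|_{Th}>0$ for every $v\neq 0$. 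Equivalently, the image of $\PML(S)$ under $[\mu]\mapsto d\log l_\mu$ has $0$ in the interior of its convex hull in $T_X^*\mc{T}(S)$; either formulation is where the real content lies, and it is not elementary. With that one citation made explicit, your argument is complete.
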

Moreover, $\bm{S}_X$ is an infinite union 
of convex sets, called \textbf{faces}, which glue together 
in a combinatorial way. 
In \cite{HOP21}, it is shown that there is a one-to-one 
correspondence with topological chain-recurrent laminations 
and faces of $\bm{S}_X$. We review their definitions 
and theorems later in this section.

We further analyze the structure of $\bm{S}_X$, and
prove Theorem \ref{thm:stretch-vectors-extremal}, 
answering Conjecture 1.12 of \cite{HOP21} in the affirmative:

\begin{Theorem*}
  The set of stretch vectors in $\bm{S}_X$ with respect to completions
  of maximal chain-recurrent laminations is precisely the set of
  extreme points in $\bm{S}_X$
\end{Theorem*}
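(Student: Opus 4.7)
The plan is to exploit the face decomposition of the unit norm sphere $\bm{S}_X$ established in \cite{HOP21}: every chain-recurrent lamination $\mu$ on $S$ determines a convex face $F_\mu = \{v \in \bm{S}_X : D_v\log l_\mu = 1\}$ consisting of unit vectors stretching $\mu$ maximally, and containment of laminations reverses containment of faces (so $\mu \subset \nu$ implies $F_\nu \subset F_\mu$). The theorem will follow by identifying the $0$-dimensional strata in this decomposition with the stretch vectors.

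For the forward direction, let $v = v_{\tilde\mu}(X)$ be the stretch vector along a completion $\tilde\mu$ of a maximal chain-recurrent lamination $\mu$. By definition $D_v\log l_\mu = 1$, so $v \in F_\mu$. Suppose $v = tu + (1-t)w$ for some $u, w \in \bm{S}_X$ and $t \in (0,1)$. Since $D_{(\cdot)}\log l_\mu \le 1$ on all of $\bm{S}_X$, linearity forces $D_u\log l_\mu = D_w\log l_\mu = 1$, so $u, w \in F_\mu$. I then argue that $v_{\tilde\mu}$ is a vertex of $F_\mu$ by passing to shearing coordinates with respect to $\tilde\mu$: the stretch path has $\dot{s}_\ell = 0$ for every bi-infinite leaf $\ell \subset \tilde\mu$ (the stretch is ``pure'' along the lamination with no shear drift), and this linear condition, together with $D_v\log l_\mu = 1$, cuts out a single point of $F_\mu$. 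Hence $u = w = v$, showing $v$ is extreme.

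For the converse, let $v \in \bm{S}_X$ be extreme, and define $\mu_v$ to be the largest chain-recurrent lamination with $D_v \log l_{\mu_v} = 1$; concretely, $\mu_v$ is the union of Hausdorff limits of sequences of simple closed curves $\alpha_n$ with $D_v\log l_{\alpha_n} \to 1$. Then $v \in F_{\mu_v}$. If $\mu_v$ were not maximal among chain-recurrent laminations, one could enlarge it by a chain-recurrent leaf $\alpha$ disjoint from $\mu_v$, and then $F_{\mu_v}$ would strictly contain a proper sub-face $F_{\mu_v \cup \alpha}$. Using the shearing-coordinate picture above one can then produce two distinct vectors $u, w \in F_{\mu_v}$ (obtained from two distinct completions of $\mu_v$, or by small shear-coordinate perturbations of $v$ within $F_{\mu_v}$) whose midpoint is $v$, contradicting extremality. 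Therefore $\mu_v$ is maximal chain-recurrent, and $v$ is a vertex of $F_{\mu_v}$, which by the forward-direction analysis must be the stretch vector for some completion $\tilde\mu_v$ of $\mu_v$.

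The main obstacle will be making precise the internal structure of $F_\mu$ for a maximal chain-recurrent $\mu$, namely that its vertices are exactly the stretch vectors corresponding to the various completions of $\mu$ and that these suffice to rule out non-trivial convex combinations for every extreme $v$. This reduces to a finite-dimensional convex-geometry question inside a single face, phrased in shearing coordinates around $\tilde\mu$, where one must enumerate the completions of $\mu$ and verify they are in bijection with the extreme rays of $F_\mu$. I expect Lemma \ref{lem:finite-stretch-vectors-span-hull}, attributed to Bonahon in the acknowledgments, to be the workhorse here, as it provides the spanning statement dual to the extremality statement being proved.
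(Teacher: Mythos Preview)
Your approach via the face decomposition of \cite{HOP21} is different from the paper's, and it has a concrete error in the forward direction together with a genuine gap in the converse.

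\textbf{Forward direction.} The claim that along $\mathrm{Stretch}(X,\tilde\mu,t)$ one has $\dot s_\ell = 0$ for the bi-infinite leaves $\ell\subset\tilde\mu$ is false. In the shearing coordinates adapted to $\tilde\mu$, the stretch path is the ray $t\mapsto e^t\cdot(s_{\lambda_1}(0),\dots,s_{\lambda_k}(0))$, so its $1$-jet is the vector $(s_{\lambda_1}(0),\dots,s_{\lambda_k}(0))$, which is generically nonzero in every coordinate. Consequently the ``linear condition\dots cuts out a single point of $F_\mu$'' argument does not go through. The paper handles this direction by a different mechanism (Lemma~\ref{lem:stretch-vectors-not-internal}): if $v_\lambda=\sum a_i v_{\lambda_i}$ with $\lambda_i^{CR}\neq\lambda^{CR}$, one takes simple closed curves $\alpha_n$ Hausdorff-converging to $\lambda^{CR}$ and shows $\lim D_{v_{\lambda_i}}\log l_{\alpha_n}<1$ while $\lim D_{v_\lambda}\log l_{\alpha_n}=1$, contradicting linearity. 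Even this only shows stretch vectors are extreme \emph{among stretch vectors}; extremality in all of $\bm S_X$ then needs the spanning result below.

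\textbf{Converse direction.} The step ``one can then produce two distinct vectors $u,w\in F_{\mu_v}$ \dots whose midpoint is $v$'' is the entire content of the theorem and you have not supplied it. Knowing that $F_{\mu_v}$ strictly contains a proper sub-face does not by itself place $v$ in the relative interior of $F_{\mu_v}$; for that you would need to know that $\partial F_{\mu_v}$ is exactly the union of the $F_\nu$ with $\nu\supsetneq\mu_v$, which is not established in \cite{HOP21} and is essentially equivalent to what you are trying to prove. The paper instead proves directly that every unit vector is a convex combination of stretch vectors: Thurston's construction writes any geodesic as a concatenation of stretch segments, and Lemma~\ref{lem:flows-lemma} (a first-order Taylor analysis of iterated flows) converts this into the statement that the $1$-jet lies in $\mathrm{Span}_{\ge 0}(SV_X)$; Lemma~\ref{lem:cone-SV-closed} handles the closure issue when there are infinitely many completions. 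Your intuition that Lemma~\ref{lem:finite-stretch-vectors-span-hull} is the workhorse is correct, but it is proved via this flows argument, not via shearing-coordinate linear algebra inside a fixed face.
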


This result not only allows us to characterize 
faces in $\bm{S}_X$ using stretch paths, but 
also allows us to find extreme points in 
the \textbf{infinitesimal envelope} from $X$ 
to $Y$. These extreme points can later be used to 
give upper and lower bounds on the twisting width 
between \textit{any} two geodesics from $X$ to $Y$, not 
just stretch lines starting at $X$, as 
computed in Section \ref{sec:bounded-width}.

Throughout this section, if $\Lambda$ is some lamination, we fix the
following notation:
\begin{itemize}
  \item We denote by $\Lambda^{CR}$ the largest sublamination of
    $\Lambda$ which is chain-recurrent (see Section \ref{sec:intro}).
  \item If $\Lambda$ is chain-recurrent, we denote by 
    $CR(\Lambda)$ to be the set of chain-recurrent laminations 
    containing $\Lambda$ that are maximal with respect to inclusion. 
    Abusing notation, we write $CR(\emptyset)$ to denote 
    the set of all chain-recurrent laminations that are maximal 
    with respect to inclusion.
  \item If $\Lambda$ is chain-recurrent or is empty, we define:
    \begin{align*}
      MCR(\Lambda)
      =\{\mu\in CR(\Lambda) : \forall \nu \in CR(\Lambda), 
      \mu \subset \nu \Rightarrow \mu = \nu\}
    \end{align*}
  \item If $\Lambda$ is chain-recurrent or empty, we denote by
    $M(\Lambda)$ to be the set of completions of laminations in
    $MCR(\Lambda)$. 
  \item If $\Lambda$ is chain-recurrent or empty, and 
    $X\in \mc{T}(S)$, we define
    $SV_X(\Lambda) = \{v_{\mu}(X):\mu \in M(\Lambda)\}$.  We abuse
    notation and write $SV_X = SV_X(\emptyset) = \{v_{\mu}(X): \mu\in
    M(\emptyset)\}$
\end{itemize}

Using this notation, Theorem \ref{thm:stretch-vectors-extremal} 
says that the set of extreme points of $\bm{S}_X$ 
is precisely $SV_X(\emptyset)$. We prove some lemmas and state some 
facts about the above sets and spaces.

\begin{Fact} \label{fact:CR-closed}
  If $\Lambda$ is chain-recurrent or empty, $CR(\Lambda)$ 
  is compact in the Hausdorff topology.
\end{Fact}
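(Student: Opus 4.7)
The plan is to exhibit $CR(\Lambda)$ as a closed subset of the space of all geodesic laminations on $S$ in the Hausdorff topology, which is compact (as recalled from \cite{BZ05}). Closed subsets of compact spaces are compact, which gives the conclusion. Concretely, I would write $CR(\Lambda) = \mc{A} \cap \mc{B}$, where $\mc{A}$ denotes the subspace of chain-recurrent geodesic laminations on $S$ and $\mc{B} = \{\lambda : \Lambda \subseteq \lambda\}$, and verify that each of $\mc{A}$ and $\mc{B}$ is Hausdorff-closed.

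For $\mc{A}$, the paper already observes that the chain-recurrent laminations form the Hausdorff closure of the set of multicurves, so closedness is built into the definition. A direct verification from the arc-approximation characterization also works: if $\lambda_n \in \mc{A}$ converges Hausdorff to $\lambda$, then given an arc segment $I \subset \lambda$ and $\eps > 0$, Hausdorff convergence yields an arc segment $I_n \subset \lambda_n$ within $\eps/2$ of $I$ for $n$ large, and chain-recurrence of $\lambda_n$ yields a simple closed geodesic $\alpha$ with an arc $J \subset \alpha$ satisfying $d_H(J, I_n) < \eps/2$; the triangle inequality then gives $d_H(I, J) < \eps$, so $\lambda$ is chain-recurrent. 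For $\mc{B}$, if $\Lambda \subseteq \lambda_n$ for all $n$ and $\lambda_n \to \lambda$ in Hausdorff distance, each point of $\Lambda$ is a Hausdorff limit of points of the $\lambda_n$'s, so the closed set $\Lambda$ is contained in $\lambda$. The case $\Lambda = \emptyset$ is trivial, since then $\mc{B}$ is the entire lamination space.

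The argument is essentially formal once the two inputs --- compactness of the ambient space of geodesic laminations, and closedness of chain-recurrence under Hausdorff limits --- are in hand; both are recalled by the paper. The only potential subtlety I anticipate is definitional: if the phrase ``maximal with respect to inclusion'' in the stated definition of $CR(\Lambda)$ is meant to further restrict to those chain-recurrent laminations containing $\Lambda$ that are maximal, then an additional step would be required, namely that maximality persists under Hausdorff limits. That would be the main obstacle, and it is not purely formal; it would need to invoke the combinatorial rigidity of maximal chain-recurrent laminations on a finite-type surface (essentially, that such a lamination is determined by a maximal multicurve together with finitely many spiraling leaves, and this data cannot strictly enlarge in the limit). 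Under the reading that makes the subsequent definition of $MCR(\Lambda)$ non-redundant, however, no such maximality is imposed on $CR(\Lambda)$ and the two-step closedness argument above suffices.
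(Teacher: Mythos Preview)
Your proposal is correct and follows essentially the same route as the paper: reduce to closedness inside the compact space of laminations, then check separately that containment of $\Lambda$ and chain-recurrence are each Hausdorff-closed (the paper handles the latter via a diagonal sequence of approximating multicurves, which is just a concrete instance of your ``closure of multicurves'' observation). Your caveat about the phrase ``maximal with respect to inclusion'' in the definition of $CR(\Lambda)$ is well taken---the paper's own proof ignores maximality entirely, and the subsequent definition of $MCR(\Lambda)$ would indeed be redundant otherwise, so the intended reading is the one you adopt.
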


\begin{proof}
  The space of all geodesic laminations on a surface is 
  compact \cite{BZ05}, so it suffices to show that 
  the set of chain-recurrent laminations containing 
  $\Lambda$ is closed. Note that containment of $\Lambda$ is a closed
  condition, since laminations are closed subsets of $S$. Let 
  $\Lambda_n$ be a converging sequence of chain-recurrent laminations.
  If $\alpha_n^k$ is a sequence of simple closed multicurves
  converging in the Hausdorff topology to $\Lambda_n$, then
  $\alpha_n^n$ is a sequence of simple closed curves converging to
  $\Lambda$.
\end{proof}

The following lemma follows from the proof of Theorem 8.5 in 
\cite{Thu86}, and also appears as Corollary 2.3 in \cite{DLRT20} 
and in the discussion following Remark 2.9 in \cite{HOP21}.

\begin{Lemma} \label{lem:v_lambda-well-defined}
  If $\Lambda$ is chain-recurrent or empty, 
  and if $\lambda\in MCR(\Lambda)$, then 
  for any two completions $\lambda_1$ and 
  $\lambda_2$ of $\lambda$, we have that 
  for any $X\in \mc{T}(S)$, and $t \ge 0$, 
  $\mathrm{Stretch}(X,\lambda_1,t)=\mathrm{Stretch}(X,\lambda_2,t)$, 
  and in particular, 
  $v_{\lambda_1}(X) = v_{\lambda_2}(X)$
\end{Lemma}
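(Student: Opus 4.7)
The plan is to invoke Thurston's construction of the stretch map from \cite{Thu86} and argue that, when one completes a \emph{maximal} chain-recurrent lamination, the construction depends only on that chain-recurrent part. Concretely, given a complete lamination $\mu$ on $X$, Thurston defines a canonical \emph{horocyclic foliation} $\mc{F}_\mu$ on $X$ transverse to $\mu$: in each complementary ideal triangle of $\mu$, the leaves of $\mc{F}_\mu$ are the horocyclic arcs that are pairwise tangent along the medians, and these match up continuously across leaves of $\mu$. The stretch path $\mathrm{Stretch}(X,\mu,t)$ is defined as the unique hyperbolic structure for which the identity map is $(\mu,\mc{F}_\mu)$-affine with stretch factor $e^t$ along $\mu$ and $e^{-t}$ along $\mc{F}_\mu$.

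The main step is to show that, when $\lambda \in MCR(\Lambda)$ and $\lambda_1, \lambda_2$ are two completions of $\lambda$, the pairs $(\lambda_1, \mc{F}_{\lambda_1})$ and $(\lambda_2, \mc{F}_{\lambda_2})$ induce the same $(\lambda,\mc{F})$-affine data on $X$. The complementary regions of $\lambda$ in $S$ are \emph{crowns} --- finite-area ideal polygons whose boundary leaves spiral onto leaves of $\lambda$. Any completion adds finitely many isolated bi-infinite geodesic leaves in each crown, subdividing it into ideal triangles. I would argue that the horocyclic foliation on each crown is already determined by $\lambda$ alone: the horocyclic leaves must exit each cusp at the canonical rate forced by the spiraling boundary leaves of the crown (which all lie in $\lambda$), and this boundary condition uniquely extends across the crown independently of the triangulating leaves chosen. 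Hence $\mc{F}_{\lambda_1}$ and $\mc{F}_{\lambda_2}$ restrict to the same foliation on $S \setminus \lambda$, and the added leaves are themselves horocyclically-geodesic arcs for this common foliation.

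Once this is established, both stretch maps coincide: the $(\lambda_i, \mc{F}_{\lambda_i})$-affine map with parameters $(e^t, e^{-t})$ depends only on $\lambda$ and $\mc{F}$, so $\mathrm{Stretch}(X,\lambda_1,t) = \mathrm{Stretch}(X,\lambda_2,t)$ for every $t \ge 0$. Differentiating at $t=0$ yields $v_{\lambda_1}(X) = v_{\lambda_2}(X)$, which is the desired equality of stretch vectors.

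The main obstacle is the geometric claim that the horocyclic foliation extends canonically across each crown of $\lambda$, independently of any triangulating leaves. This is precisely what is done in Thurston's proof of Theorem 8.5 of \cite{Thu86} (see also Corollary 2.3 of \cite{DLRT20} and the discussion after Remark 2.9 in \cite{HOP21}), so the proof reduces to citing these references and extracting the observation that Thurston's construction factors through the chain-recurrent sublamination.
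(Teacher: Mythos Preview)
Your proposal is correct and matches the paper's treatment: the paper does not give its own proof of this lemma but simply cites Theorem~8.5 of \cite{Thu86}, Corollary~2.3 of \cite{DLRT20}, and the discussion after Remark~2.9 of \cite{HOP21}, which are exactly the references you arrive at. Your expository sketch of why Thurston's stretch construction factors through the chain-recurrent part $\lambda$ is a helpful elaboration, though note that the literal horocyclic foliation on a crown \emph{can} depend on the triangulating diagonals as a geometric object; the cleaner way to phrase the independence (and what the cited references actually use) is that the maximally stretched lamination along either $\mathrm{Stretch}(X,\lambda_i,t)$ is $\lambda_i^{CR}=\lambda$, and maximal chain-recurrence of $\lambda$ then forces uniqueness of the geodesic.
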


\subsection{The case for $\Env_0(X,Y)$}
Let $X,Y \in \mc{T}(S)$ be two points in Teichmuller space, and let
$\mc{G}(X,Y)$ denote the set of all  geodesics parametrized by arc
length from $X$ to $Y$. By definition, $\cup \mc{G}(X,Y) = \Env(X,Y)$.
If $g\in \mc{G}(X,Y)$, we define the \textbf{$1$-jet} of $g$ by
$v_g(X) = \frac{d}{dt}|_{t=0} g(t)$. Note that $v_g(X)\in T_X
\mc{T}(S)$ is a unit tangent vector.

\begin{Definition}
  The \textbf{infinitesimal envelope}, $\Env_0(X,Y)\subset
  T_X\mc{T}(S)$ is defined by:
  \begin{align*}
    \Env_0(X,Y) = \{v_g(X) : g\in \mc{G}(X,Y)\}
  \end{align*}
\end{Definition}

For any $X,Y\in \mc{T}(S)$, we denote by $\Lambda(X,Y)$ the maximally
stretched lamination between $X$ and $Y$. In this subsection, we prove
the following:
\begin{Theorem} ~\label{thm:conv-hull-env0}
  For any $X,Y\in \mc{T}(S)$, $\Env_0(X,Y)$ is the convex hull of:
  $SV_X(\Lambda(X,Y))$. Moreover, 
  $SV_X(\Lambda(X,Y))$ is precisely the set of extremal vectors 
  in $\Env_0(X,Y)$.
\end{Theorem}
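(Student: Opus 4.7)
The plan is to identify the infinitesimal envelope $\Env_0(X,Y)$ with the face of $\bm{S}_X$ corresponding to the chain-recurrent lamination $\Lambda(X,Y)$ in the HOP21 stratification, which I denote $F_{\Lambda(X,Y)}$. Once this identification is established, the theorem reduces to applying Theorem~\ref{thm:stretch-vectors-extremal} to this face.

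For the inclusion $\Env_0(X,Y) \subseteq F_{\Lambda(X,Y)}$, fix $g\in\mc{G}(X,Y)$ and a leaf $c$ of $\Lambda(X,Y)$, and let $T=d_{Th}(X,Y)$. By definition of $\Lambda(X,Y)$, $l_c(Y)=e^T l_c(X)$, while the geodesic property of $g$ gives both $l_c(g(t))\le e^t l_c(X)$ and $l_c(Y)\le e^{T-t}l_c(g(t))$. These inequalities together with the equality force $l_c(g(t))=e^t l_c(X)$ for all $t\in[0,T]$, so $D_{v_g(X)}\log l_c=1$, placing $v_g(X)$ in $F_{\Lambda(X,Y)}$.

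For the reverse inclusion, I would first show $SV_X(\Lambda(X,Y))\subseteq\Env_0(X,Y)$. Given $\mu\in M(\Lambda(X,Y))$, set $Z_t=\mathrm{Stretch}(X,\mu,t)$. Since $\mu^{CR}\supseteq\Lambda(X,Y)$, the stretch path satisfies $l_c(Z_t)=e^t l_c(X)$ for every leaf $c$ of $\Lambda(X,Y)$, giving $d_{Th}(Z_t,Y)\ge T-t$. For the matching upper bound I would factor Thurston's optimal $e^T$-Lipschitz map $X\to Y$ through the stretch map $\phi_{X,Z_t}:X\to Z_t$, producing an $e^{T-t}$-Lipschitz map $Z_t\to Y$ (using that $\phi_{X,Z_t}$ already performs the required $e^t$ amount of stretching along leaves of $\Lambda(X,Y)$). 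Hence $Z_t$ lies on a geodesic from $X$ to $Y$, and concatenating the initial stretch segment with any geodesic from $Z_\epsilon$ to $Y$ produces an element of $\mc{G}(X,Y)$ with $1$-jet $v_\mu(X)$. For a general $v\in F_{\Lambda(X,Y)}$, Theorem~\ref{thm:stretch-vectors-extremal} applied to the face expresses $v$ as a convex combination of vectors in $SV_X(\Lambda(X,Y))$; I would then realize $v$ itself as a $1$-jet by extending any $C^1$ geodesic germ at $X$ with tangent $v$ (which exists because every unit vector in $\bm{S}_X$ is the $1$-jet of such a germ) to a full geodesic from $X$ to $Y$ via the same length-equality argument used in the first step.

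This establishes $\Env_0(X,Y)=F_{\Lambda(X,Y)}=\mathrm{conv}(SV_X(\Lambda(X,Y)))$. Since $F_{\Lambda(X,Y)}$ is a face of $\bm{S}_X$, its extreme points are exactly the extreme points of $\bm{S}_X$ lying in it, which by Theorem~\ref{thm:stretch-vectors-extremal} are precisely $SV_X(\Lambda(X,Y))$. The main obstacle is the upper bound $d_{Th}(Z_t,Y)\le T-t$: constructing the $e^{T-t}$-Lipschitz map $Z_t\to Y$ requires careful control of how the partial stretch $\phi_{X,Z_t}$ interacts with Thurston's optimal map on the complementary regions of $\mu$, since the $e^t$-budget must be globally saturated in the factorization, not merely along the leaves of $\Lambda(X,Y)$.
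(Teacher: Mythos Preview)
Your proposal has a circularity problem: you invoke Theorem~\ref{thm:stretch-vectors-extremal} to prove Theorem~\ref{thm:conv-hull-env0}, but in this paper the logical dependence runs the other way. The proof of Theorem~\ref{thm:stretch-vectors-extremal} (Section~3.3) takes an extreme point $v\in\bm{S}_X$, places it in some $\Env_0(X,Y)$, and then \emph{applies Theorem~\ref{thm:conv-hull-env0}} to conclude $v\in SV_X$. So you cannot assume Theorem~\ref{thm:stretch-vectors-extremal} here; it is the corollary, not the input. The paper's actual argument for Theorem~\ref{thm:conv-hull-env0} is direct: it uses Thurston's Theorem~8.5 (every geodesic from $X$ to $Y$ is a concatenation of stretch paths along laminations in $M(\Lambda(X,Y))$), feeds this into the Taylor-expansion Lemma~\ref{lem:flows-lemma} to show any $v\in\Env_0(X,Y)$ lies in $\Span_{\ge 0}(SV_X(\Lambda(X,Y)))$, and then checks the combination is convex. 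Extremality of stretch vectors is handled separately in Lemma~\ref{lem:stretch-vectors-not-internal} via a length-functional argument that does not use Theorem~\ref{thm:stretch-vectors-extremal}.

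There is also a genuine gap in your reverse inclusion, independent of the circularity. For $v\in F_{\Lambda(X,Y)}$ you propose to take a $C^1$ geodesic germ with tangent $v$ and extend it to a geodesic ending at $Y$ ``via the same length-equality argument''. But that argument only shows $l_c(g(t))=e^t l_c(X)$ for leaves $c$ of $\Lambda(X,Y)$, which yields the \emph{lower} bound $d_{Th}(g(t),Y)\ge T-t$. The matching upper bound $d_{Th}(g(t),Y)\le T-t$ does not follow: nothing rules out some other curve having a larger length ratio between $g(t)$ and $Y$. Your factoring-of-Lipschitz-maps idea for the special case $Z_t=\mathrm{Stretch}(X,\mu,t)$ has the same issue (as you note), and for a general germ there is no stretch map to factor through at all. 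In the paper this direction is again supplied by Thurston's Theorem~8.5, which produces actual geodesics from $X$ to $Y$ beginning with prescribed stretch segments, rather than by a Lipschitz-factoring argument.
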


\begin{Example}
  As an example for Theorem \ref{thm:conv-hull-env0}, 
  consider the genus $2$ surface, $S_2$, and let 
  $\lambda = \alpha \cup \beta \cup \gamma$ be a pair-of-pants
  decomposition consiting of non-separating curves. 
  Let $X \in \mc{T}(S_2)$ be arbitrary. 
  Working in Fenchel-Nielsen co-ordinates with respect 
  to $\alpha,\beta,\gamma$, $\Outenv(X,\lambda)$ 
  can be thought of as a $3$-dimensional cone lying 
  in $\R^{6}$. In particular, every $Z$ in $\Outenv(X,\lambda)$ 
  of distance $t$ from $X$ has the same length co-ordinates, 
  and the only interesting co-ordinates are the twists. 

  If $Y\in \Outenv(X,\lambda)$, we 
  can consider the $3$-dimensional projection of 
  $\Env_0(X,Y)$ to the tangent subspace of $T_X\mc{T}(S_2)$ 
  spanned by the directions corresponding to 
  twist co-ordinates. Theorem \ref{thm:conv-hull-env0} 
  then says that the extremal vectors in 
  this projection must be stretch vectors with 
  respect to laminations in $M(\lambda)$.

  Using the formulas developed in subsection \ref{subsec:twist-compute}, 
  we can plot all $1$-jets of stretch paths 
  emenating from $X$ and maximally-stretching $\lambda$. This 
  is Figure \ref{fig:chamf-cube}.

  \begin{figure}
    \begin{center}
      \includegraphics[width=0.6\textwidth]{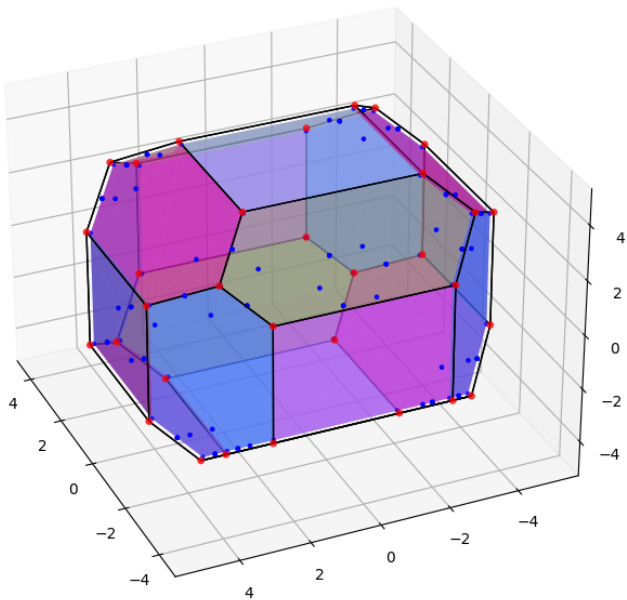}
    \end{center}
    \caption{The projection of all stretch vectors emenating from 
    $X$ and maximally stretching $\lambda$. Combinatorially, 
    this is a chamfered cube.}
    \label{fig:chamf-cube}
  \end{figure}
  The red dots in the above picture are stretch vectors 
  corresponding to the $32$ chain-recurrent completions 
  of $\lambda$. The rest of the points are 
  stretch paths corresponding to non chain-recurrent 
  laminations. The eight red vertices at the ``corners'' of 
  the projected infinitesimal envelope correspond 
  to the completions $\Lambda$ of $\lambda$ that have the property
  that for any pair of curves in $\lambda$, there exists a leaf 
  $l \subset \Lambda$ asymptotic to both. There are 
  precisely $8$ such laminations, 
  corresponding to the $2^3$ possible directions 
  in which leaves can asymptotically twist around 
  $\alpha,\beta$, and $\gamma$.
  that have leaves 
\end{Example}

We will split the proof of Theorem \ref{thm:conv-hull-env0} up into three 
main lemmas:

\begin{Lemma} \label{lem:conv-hull-finite-completions}
  For any $X,Y\in \mc{T}(S)$, if 
  $\Lambda(X,Y)$ has finitely-many completions, 
  then $Env_0(X,Y)$ is the convex hull of 
  $SV_X(\Lambda(X,Y))$
\end{Lemma}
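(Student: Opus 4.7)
The plan is to prove the equality $\Env_0(X,Y) = \operatorname{conv}(SV_X(\Lambda(X,Y)))$ by two inclusions, using the finiteness hypothesis to turn the hard inclusion into a finite polytope computation.

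First I would handle $\operatorname{conv}(SV_X(\Lambda(X,Y))) \subseteq \Env_0(X,Y)$ by a concatenation argument. Given $\mu \in M(\Lambda(X,Y))$ with underlying maximal chain-recurrent $\lambda \in MCR(\Lambda(X,Y))$, set $X_t = \mathrm{Stretch}(X,\mu,t)$. Every simple closed curve $\alpha \subseteq \Lambda(X,Y) \subseteq \lambda$ is stretched uniformly along the stretch path, $l_\alpha(X_t) = e^t l_\alpha(X)$, while the defining property of the maximally stretched lamination gives $l_\alpha(Y) = e^{d_{Th}(X,Y)} l_\alpha(X)$. Combining these yields $d_{Th}(X_t,Y) \geq d_{Th}(X,Y)-t$, with the reverse inequality supplied by the triangle inequality, so $X_t \in \Env(X,Y)$. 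Concatenating the stretch segment with any geodesic from $X_t$ to $Y$ produces a geodesic from $X$ to $Y$ whose $1$-jet at $X$ is $v_\mu(X)$, placing $v_\mu(X) \in \Env_0(X,Y)$. By Lemma \ref{lem:v_lambda-well-defined}, this output depends only on $\lambda$ and not on the choice of completion $\mu$. Convexity of $\Env_0(X,Y)$ then follows because any $v \in \Env_0(X,Y)$ must satisfy $D_v \log l_\alpha = 1$ for every simple closed $\alpha \subseteq \Lambda(X,Y)$ (obtained by differentiating $l_\alpha$ at $t=0$ along a realizing geodesic), so $\Env_0(X,Y)$ is the intersection of $\bm{S}_X$ with a system of supporting hyperplanes of the unit Thurston ball, i.e., a convex face in the sense of \cite{HOP21}.

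For the reverse inclusion $\Env_0(X,Y) \subseteq \operatorname{conv}(SV_X(\Lambda(X,Y)))$, I would fix a single completion $\bar\mu$ of some $\lambda \in MCR(\Lambda(X,Y))$ and express vectors in $\Env_0(X,Y)$ in the shearing coordinates adapted to $\bar\mu$. The pinning conditions $D_v \log l_\alpha = 1$ for $\alpha \subseteq \Lambda(X,Y)$ fix the shearing rates along all leaves of $\Lambda(X,Y)$, leaving as free parameters only the shearing rates along the completing leaves in $\bar\mu \setminus \Lambda(X,Y)$. Using the explicit shearing formulas of \cite{BBFS09}, I would translate the remaining constraint $D_v \log l_\alpha \le 1$ for all simple closed $\alpha$ into a finite system of linear inequalities in these free shearing parameters. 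The resulting feasible region is a compact convex polytope, and under the hypothesis that $M(\Lambda(X,Y))$ is finite its vertices correspond precisely to the elements of $SV_X(\Lambda(X,Y))$, each realizing an extremal choice of asymptotic twist direction for the completing leaves around the closed leaves of $\Lambda(X,Y)$ (the chamfered-cube example provides the model picture). The Krein--Milman theorem then delivers the desired inclusion.

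The main obstacle is the vertex identification: proving the bijection between extremal corners of the shearing polytope and stretch vectors $v_{\mu_i}(X)$ with $\mu_i \in M(\Lambda(X,Y))$. This demands a careful case analysis of how the completing leaves of $\bar\mu$ may spiral around the closed leaves of $\Lambda(X,Y)$ and which of the linear inequalities become tight for each such configuration; one must also check that stretch vectors coming from different completions that share the same chain-recurrent core coincide, as guaranteed by Lemma \ref{lem:v_lambda-well-defined}. The finiteness hypothesis on $M(\Lambda(X,Y))$ is exactly what converts what would otherwise be a continuous family of completion geometries into a discrete collection of extremal corners, making the polytope picture genuinely finite-dimensional and combinatorial.
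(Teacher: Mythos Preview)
Your plan diverges substantially from the paper's argument, and the hard half of your outline has genuine gaps.

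For the forward inclusion, your convexity step does not go through as written. You correctly observe that every $v\in\Env_0(X,Y)$ satisfies $D_v\log l_\alpha=1$ for $\alpha\subset\Lambda(X,Y)$, so $\Env_0(X,Y)$ is contained in the face $F_{\Lambda(X,Y)}=\bm{S}_X\cap\{D_\bullet\log l_\alpha=1\}$. But you then assert that $\Env_0(X,Y)$ \emph{equals} this intersection, which is not obvious: given a unit vector $v$ with $D_v\log l_\alpha=1$ for all $\alpha\subset\Lambda(X,Y)$, you still need to produce a geodesic from $X$ all the way to $Y$ with initial tangent $v$, and you have not done this. So convexity of $\Env_0(X,Y)$ is not established, and hence $\operatorname{conv}(SV_X(\Lambda(X,Y)))\subset\Env_0(X,Y)$ does not follow from knowing merely that each individual stretch vector lies in $\Env_0(X,Y)$.

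For the reverse inclusion, your polytope approach has two unfilled holes you yourself flag. First, the passage from the infinite family of inequalities $D_v\log l_\alpha\le 1$ (over all simple closed $\alpha$) to a \emph{finite} linear system in the free shearing parameters is asserted but not argued; there is no mechanism in \cite{BBFS09} that does this for you in general. Second, and more seriously, the ``vertex identification'' you call the main obstacle is essentially the entire content of the lemma: you are claiming that the extreme points of $F_{\Lambda(X,Y)}$ are exactly $SV_X(\Lambda(X,Y))$, which is what the paper is ultimately proving (in the form of Theorem \ref{thm:conv-hull-env0}). Expressing the other stretch vectors $v_{\mu'}$ in the shearing coordinates of a fixed $\bar\mu$ and checking they sit at polytope corners is not a routine case analysis outside of the pants-decomposition example.

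The paper takes a completely different route that sidesteps both issues. It invokes Thurston's Theorem 8.5 in \cite{Thu86}, which says that any point on a geodesic from $X$ to $Y$ can be reached by a finite concatenation of stretch segments along laminations in $M(\Lambda(X,Y))$. The finiteness hypothesis is then used not to make a polytope finite, but to guarantee that $\Span_{\ge 0}(SV_X(\Lambda(X,Y)))$ is closed, so that a Taylor-expansion lemma about concatenated flows (Lemma \ref{lem:flows-lemma}) applies and forces the $1$-jet of any geodesic to lie in that span. The convexity of the combination ($\sum a_i=1$) drops out at the end from a unit-speed argument, by building an explicit concatenated geodesic with tangent $k\sum a_i v_{\lambda_i}$ and comparing norms. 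No face or polytope analysis is needed.
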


We will use this lemma to show the general case:
\begin{Lemma} \label{lem:conv-hull-all-completions}
  For any $X,Y\in \mc{T}(S)$, $Env_0(X,Y)$ is the convex hull of:
  $SV_X(\Lambda(X,Y))$
\end{Lemma}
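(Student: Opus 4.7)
My plan is to prove both inclusions directly, using Theorem \ref{thm:stretch-vectors-extremal} as the structural input for the first inclusion and invoking Lemma \ref{lem:conv-hull-finite-completions} via a limiting argument for the second. The underlying principle throughout is that the maximally stretched lamination $\Lambda(X,Y)$ imposes linear constraints, expressible via the derivatives $d\log l_\lambda$, on which stretch vectors in $SV_X$ can contribute to a given $v \in \Env_0(X,Y)$ as a convex combination.

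For the containment $\Env_0(X,Y) \subseteq \mathrm{Conv}(SV_X(\Lambda(X,Y)))$, take $v \in \Env_0(X,Y)$ and apply Theorem \ref{thm:stretch-vectors-extremal} to write $v = \sum_i \alpha_i v_{\mu_i}$ with $\mu_i \in M(\emptyset)$, $\alpha_i \ge 0$, $\sum \alpha_i = 1$. The crucial step is to show $\mu_i \in M(\Lambda(X,Y))$ whenever $\alpha_i > 0$. Fix a measured lamination $\lambda$ whose support is contained in a leaf of $\Lambda(X,Y)$; such $\lambda$ exist because $\Lambda(X,Y)$ is chain-recurrent, so that its leaves are limits of rescaled simple closed curves. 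A squeeze argument along any geodesic $g$ from $X$ to $Y$ with $\dot g(0) = v$, using the Lipschitz bounds $l_\lambda(g(t)) \le e^t l_\lambda(X)$ and $l_\lambda(Y) \le e^{d_{Th}(X,Y)-t} l_\lambda(g(t))$ together with the saturated ratio $l_\lambda(Y)/l_\lambda(X) = e^{d_{Th}(X,Y)}$, forces $l_\lambda(g(t)) = e^t l_\lambda(X)$ for all $t$, hence $d\log l_\lambda(v) = 1$. On the other hand each stretch vector $v_{\mu_i}$ satisfies $d\log l_\lambda(v_{\mu_i}) \le 1$ with equality iff $\mathrm{supp}(\lambda) \subseteq \mu_i^{CR}$. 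Linearity of $d\log l_\lambda$ and the equality $\sum \alpha_i d\log l_\lambda(v_{\mu_i}) = 1$ force $\mathrm{supp}(\lambda) \subseteq \mu_i^{CR}$ for every $i$ with $\alpha_i > 0$. Varying $\lambda$ over the leaves of $\Lambda(X,Y)$ yields $\Lambda(X,Y) \subseteq \mu_i^{CR}$, so $v_{\mu_i} \in SV_X(\Lambda(X,Y))$.

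For the reverse containment, I first verify that $\Env_0(X,Y)$ is convex using geodesic concatenation together with the Finsler convexity of $\bm{S}_X$, so that it suffices to show $SV_X(\Lambda(X,Y)) \subseteq \Env_0(X,Y)$. Given $v_\mu \in SV_X(\Lambda(X,Y))$, let $X_s = \mathrm{Stretch}(X, \tilde\mu, s)$ for small $s > 0$. The same squeeze argument applied to $\mu^{CR} \supseteq \Lambda(X,Y)$ gives $d_{Th}(X, X_s) + d_{Th}(X_s, Y) = d_{Th}(X,Y)$, so $X_s$ lies on the envelope. Concatenating the initial stretch segment with any geodesic from $X_s$ to $Y$ produces a geodesic from $X$ to $Y$ with $1$-jet $v_\mu$. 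Lemma \ref{lem:conv-hull-finite-completions} enters here by application to the pair $(X_s, Y)$: since $\mu^{CR}$ is maximal chain-recurrent it has finitely many completions, which forces $\Lambda(X_s, Y)$ into the finite-completion regime and ensures the envelope structure near $X_s$ is controlled.

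The main obstacle will be verifying the triangle equality $d_{Th}(X, X_s) + d_{Th}(X_s, Y) = d_{Th}(X,Y)$ in the reverse inclusion, specifically arguing that $\Lambda(X_s, Y)$ still contains $\Lambda(X,Y)$ (so that the squeeze is available) rather than collapsing to something smaller. I expect this to be handled by writing $\mu^{CR}$ as a Hausdorff limit of maximal chain-recurrent laminations to which Lemma \ref{lem:conv-hull-finite-completions} directly applies, and then taking a limit as $s \to 0$ using continuity of the stretch-vector map $\mu \mapsto v_\mu(X)$.
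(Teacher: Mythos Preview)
Your argument has a circularity problem. You invoke Theorem \ref{thm:stretch-vectors-extremal} to write an arbitrary $v\in\Env_0(X,Y)$ as a convex combination of vectors in $SV_X(\emptyset)$, and then use the linear functionals $d\log l_\lambda$ to cut down to $SV_X(\Lambda(X,Y))$. But in the paper's logical order, Theorem \ref{thm:stretch-vectors-extremal} is deduced \emph{from} Theorem \ref{thm:conv-hull-env0}, whose main content is precisely Lemma \ref{lem:conv-hull-all-completions} together with Lemma \ref{lem:stretch-vectors-not-internal}. So you are assuming what you are trying to prove. The paper's route avoids this: it repeats the argument of Lemma \ref{lem:conv-hull-finite-completions} verbatim, using Thurston's concatenation description of geodesics (Theorem 8.5 of \cite{Thu86}) and the flow Lemma \ref{lem:flows-lemma}, with Lemma \ref{lem:cone-SV-closed} supplying the closedness hypothesis that was automatic in the finite case. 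No appeal to the global structure of $\bm{S}_X$ is needed.

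Even setting aside the circularity, your cutting-down step has a gap. You propose to vary a measured lamination $\lambda$ ``over the leaves of $\Lambda(X,Y)$'' and conclude $\Lambda(X,Y)\subseteq \mu_i^{CR}$. But a chain-recurrent lamination can contain isolated leaves that lie in the support of no transverse measure; your argument only yields that the \emph{stump} of $\Lambda(X,Y)$ is contained in $\mu_i^{CR}$, which is weaker. (The paper's proof of Lemma \ref{lem:stretch-vectors-not-internal} dodges this by working with Hausdorff limits of simple closed curves rather than with transverse measures directly.) Finally, your reverse inclusion is largely correct---the triangle-equality argument for $X_s=\mathrm{Stretch}(X,\mu,s)$ does show $v_\mu\in\Env_0(X,Y)$---but the invocation of Lemma \ref{lem:conv-hull-finite-completions} there is unnecessary, and the promised direct proof that $\Env_0(X,Y)$ is convex is never given.
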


Finally, we show:

\begin{Lemma} \label{lem:stretch-vectors-not-internal}
  Let $\lambda \in M(\Lambda(X,Y))$, 
  and let $v_{\lambda} \in SV_X(\Lambda(X,Y))$ be 
  a convex combination:
  $v_{\lambda} = \sum_i a_i v_{\lambda_i}$, where 
  $\lambda_i \in M(\Lambda(X,Y))$. Then 
  $v_{\lambda_i} = v$ for all $i$.
\end{Lemma}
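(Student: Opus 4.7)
The plan is a standard convexity-and-equality argument: the log-length derivative $D_{\bullet}\log l_\alpha$ is a linear functional on $T_X\mc{T}(S)$ on which $v_\lambda$ is extremal for suitable $\alpha$, so any convex decomposition is forced to realize the same extremal value on each piece.

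I would rely on two standard inputs. First, Thurston's norm bound gives $D_w \log l_\alpha \le \|w\|_{Th}$ for every tangent vector $w \in T_X\mc{T}(S)$ and every simple closed curve $\alpha$. Second, the defining property of stretch paths ensures that for every simple closed leaf $\alpha$ of the maximal chain-recurrent sublamination $\mu^{CR}$, one has $l_\alpha(\mathrm{Stretch}(X,\mu,t)) = e^t l_\alpha(X)$, so $D_{v_\mu}\log l_\alpha = 1$.

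Given the hypothesis $v_\lambda = \sum_i a_i v_{\lambda_i}$ with $a_i > 0$, $\sum_i a_i = 1$, and each $v_{\lambda_i}$ of Thurston norm $1$, I would fix any simple closed leaf $\alpha$ of $\lambda^{CR}$ and compute
\begin{align*}
  1 \;=\; D_{v_\lambda}\log l_\alpha \;=\; \sum_i a_i\, D_{v_{\lambda_i}}\log l_\alpha \;\le\; \sum_i a_i \;=\; 1.
\end{align*}
Equality of the outer terms, together with $D_{v_{\lambda_i}}\log l_\alpha \le 1$ for every $i$, forces $D_{v_{\lambda_i}}\log l_\alpha = 1$ for each $i$. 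I would then invoke the characterization (from \cite{Thu86, HOP21}) that, for a stretch vector $v_{\lambda_i}$, the simple closed curves on which the derivative equals $1$ are precisely the simple closed leaves of $\lambda_i^{CR}$. Letting $\alpha$ range over the simple closed leaves of $\lambda^{CR}$ and taking Hausdorff limits (which remain inside the closed set $\lambda_i^{CR}$) would yield $\lambda^{CR} \subseteq \lambda_i^{CR}$. Since $\lambda^{CR}\in MCR(\Lambda(X,Y))$ is maximal and $\lambda_i^{CR}$ is another chain-recurrent lamination containing $\Lambda(X,Y)$, I would conclude $\lambda^{CR} = \lambda_i^{CR}$, after which Lemma \ref{lem:v_lambda-well-defined} gives $v_{\lambda_i} = v_\lambda$ for every $i$.

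The main obstacle I anticipate is the characterization step in the middle: establishing that $D_{v_{\lambda_i}}\log l_\alpha < 1$ \emph{strictly} for simple closed curves $\alpha$ not already contained in $\lambda_i^{CR}$, so that the derivative-equal-to-one condition truly pins $\alpha$ down as a leaf rather than merely as an incidentally maximally-stretched curve. This strict inequality is what upgrades ``derivative equal to one'' into actual Hausdorff containment of laminations; it rests on Thurston's explicit construction of the stretch map in \cite{Thu86} together with the face description in \cite{HOP21}, which I plan to cite rather than reprove.
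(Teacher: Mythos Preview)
Your extremality-via-linear-functionals strategy is exactly the paper's approach, and the anticipated ``strict inequality'' obstacle you flag is indeed the crux. However, there is a genuine gap earlier in the argument that you should address before worrying about that step.

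You assume you can pick a simple closed leaf $\alpha$ of $\lambda^{CR}$, and later that the simple closed leaves of $\lambda^{CR}$ Hausdorff-accumulate onto all of $\lambda^{CR}$. Neither is guaranteed: a maximal chain-recurrent lamination can be minimal and have no closed leaves at all (think of the stable lamination of a pseudo-Anosov), and even when closed leaves exist they are never dense in a non-closed minimal component. In such cases your first displayed equation has no $\alpha$ to feed into it, and the Hausdorff-limit step that is supposed to produce $\lambda^{CR}\subseteq\lambda_i^{CR}$ is vacuous.

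The paper repairs this by exploiting chain-recurrence directly: instead of closed leaves \emph{of} $\lambda^{CR}$, take a sequence of simple closed multicurves $\alpha_n$ Hausdorff-converging \emph{to} $\lambda^{CR}$. One then has $D_{v_\lambda}\log l_{\alpha_n}\to 1$ (rather than equality for a single $\alpha$), and the convex combination forces $\limsup_n D_{v_{\lambda_i}}\log l_{\alpha_n}=1$ for each $i$. The strict-inequality step you anticipated now becomes: if this $\limsup$ equals $1$, pass to a measured-lamination limit $\lambda^m$ supported on $\lambda^{CR}$ with $D_{v_{\lambda_i}}\log l_{\lambda^m}=1$, which forces $\lambda^{CR}$ into the maximally-stretched set of $v_{\lambda_i}$, hence into $\lambda_i^{CR}$. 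Maximality then finishes as you wrote. So the skeleton of your argument is right; you just need to replace ``simple closed leaf of $\lambda^{CR}$'' by ``approximating multicurve sequence'' and carry the limit through.
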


Before we prove the lemmas, we will prove a helpful technical lemma
about oriented foliations and their $1$-jets. When we say 
\textit{oriented foliation}, we mean a foliation 
on some manufold $M$ such that every leaf carries with it an
orientation, and that these orientations vary continuously along any
path transverse to the foliation.

Let $\mc{F}_1,\mc{F}_2,\hdots$ be smooth oriented foliations defined
on some open domain with smooth (possibly empty) boundary $0\in
U \subset \R^k$. For each oriented foliation, we can construct a 
vector field of unit-length vectors tangent to the foliation. Such
a vecor field can be constructed by taking the associated line field
of the foliation and assigning a direction at every point using the
orientation of the foliation. We will refer to the flow along the
vector field associated to the foliation by the `flow along the
foliation'.

Let $U^{\eps} = \{y\in U: d(y,\del y) >\eps\}$ 
and let $f_i:[0,\eps)\times U^{\eps} \to \R^k$ 
be defined by setting $f_i(t,x)$ to be the flow for time $t$ along
$\mc{F}_i$ starting from $x$. Taking $\eps$ sufficiently small, 
we can always ensure that $0 \in U^{\eps}$, and that 
all of the flows $f_i$ are defined at $x=0$.

We prove:
\begin{Lemma} ~\label{lem:flows-lemma}
  Let $\alpha:[0,T) \to U$ be a path differentiable 
  at $0$, and such that 
  $\alpha(0) = 0$. Assume that for every 
  $t< T$, there exist 
  $t_1,\hdots,t_n\ge 0$ and $i_1,\hdots,i_n$ such that 
  $t = \sum_i t_i$ and $\alpha(t)
  = f_{i_1}(t_1,f_{i_2}(t_2,(\hdots,(f_{i_n}(t_n,0)))))$. 
  Furthermore, assume that the non-negative span,
  $\Span_{\ge 0}(\{f_i\}_i)$ is
  a closed subset of $\R^N$. Then $\alpha'(0) \in \Span_{\ge
  0}(\{\mc{F}_i'(0)\}_{i}) = \Span_{\ge 0}(\{\frac{\del}{\del t}f_i
  (0,0)\}_{i})$ 
\end{Lemma}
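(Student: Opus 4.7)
The strategy is to Taylor-expand each flow $f_i$ to first order in time, show by telescoping that a composition $f_{i_1}(t_1,f_{i_2}(t_2,\ldots f_{i_n}(t_n,0)\ldots))$ equals $\sum_j t_j\,\mc{F}_{i_j}'(0)$ up to an error quadratic in $t=\sum_j t_j$, and then use the closedness hypothesis to pass to the limit as $t\to 0^+$. The point that needs care is that the number $n$ of flow segments is allowed to grow as $t\to 0$, so we must control the error uniformly in $n$.

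\textbf{Step 1: first-order expansion of a single flow.} Since each $f_i$ is smooth on $[0,\eps)\times U^{\eps}$, a Taylor expansion in the first variable, together with the trivial identity $f_i(0,x)=x$ and $\partial_t f_i(0,x) = \mc{F}_i(x)$, gives on some fixed ball $B\subset U^{\eps}$ around $0$ a uniform estimate
\begin{align*}
  f_i(s,x) \;=\; x + s\,\mc{F}_i'(0) + E_i(s,x), \qquad \|E_i(s,x)\|\;\le\; C\bigl(s^2 + s\,\|x\|\bigr),
\end{align*}
valid for all $i$, all $0\le s<\delta$ and all $x\in B$, with $C,\delta>0$ depending only on a finite set of derivative bounds for the finitely many vector fields under consideration.

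\textbf{Step 2: telescoping the composition.} Fix $t<T$ small, and write $\alpha(t)=f_{i_1}(t_1,f_{i_2}(t_2,\ldots f_{i_n}(t_n,0)\ldots))$ with $t_j\ge 0$ and $\sum_j t_j=t$. Set $x_{n+1}=0$ and $x_j=f_{i_j}(t_j,x_{j+1})$ for $j=n,\ldots,1$, so $x_1=\alpha(t)$. A standard Gronwall estimate gives $\|x_{j+1}\|\le M\sum_{k>j}t_k\le Mt$ for some constant $M$. Applying Step 1 at each level and summing the resulting identities yields
\begin{align*}
  \alpha(t) \;=\; \sum_{j=1}^n t_j\,\mc{F}_{i_j}'(0) \;+\; R(t),
\end{align*}
where $\|R(t)\|\le\sum_{j=1}^n C\bigl(t_j^2 + t_j\,\|x_{j+1}\|\bigr) \le C\bigl(\sum_j t_j\bigr)^2 + CMt\sum_j t_j = O(t^2)$ independently of $n$.

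\textbf{Step 3: passing to the limit.} Dividing by $t$, the vector
\begin{align*}
  \frac{\alpha(t)}{t} \;-\; \frac{R(t)}{t} \;=\; \sum_{j=1}^n \frac{t_j}{t}\,\mc{F}_{i_j}'(0)
\end{align*}
lies in $\Span_{\ge 0}(\{\mc{F}_i'(0)\}_i)$ for every $t>0$, since the coefficients $t_j/t$ are non-negative. As $t\to 0^+$, the remainder $R(t)/t\to 0$ by Step 2, while differentiability of $\alpha$ at $0$ gives $\alpha(t)/t\to \alpha'(0)$. Thus $\alpha'(0)$ is a limit of points in $\Span_{\ge 0}(\{\mc{F}_i'(0)\}_i)$, and by the closedness hypothesis it lies in this set.

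\textbf{Main obstacle.} The main subtlety is Step 2: the decomposition $\alpha(t)=f_{i_1}(t_1,\ldots)$ is only assumed to exist for each $t$ separately, with no control on how $n$ behaves. The quadratic-remainder structure $O(t_j^2+t_j\|x_{j+1}\|)$ from Step 1 is precisely what makes the telescoped error $O(t^2)$ regardless of $n$; without it, a direct estimate of the form $n\cdot\max_j t_j^2$ would be useless since $n$ could blow up.
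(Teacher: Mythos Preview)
Your proof is correct and follows the same strategy as the paper's: Taylor-expand each flow to first order, telescope the composition to obtain $\alpha(t) = \sum_j t_j\,\mc{F}_{i_j}'(0) + O(t^2)$, then invoke closedness of the cone to pass to the limit. You are in fact more careful than the paper on the main subtlety you flag---that the $O(t^2)$ remainder must be uniform in the number $n$ of segments---which the paper handles only via informal $o(t^2)$ bookkeeping; your explicit bound $\|E_i(s,x)\|\le C(s^2+s\|x\|)$ together with the Gronwall-type control $\|x_{j+1}\|\le Mt$ is exactly what makes the telescoped error sum to $O(t^2)$ regardless of $n$ (note that both arguments tacitly require uniform $C^2$ bounds across the family $\{\mc{F}_i\}$, i.e.\ effectively finitely many foliations or some compactness, which is how the lemma is actually applied later in the paper).
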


\begin{figure}
  \begin{center}
    \includegraphics[width=0.6\textwidth]{./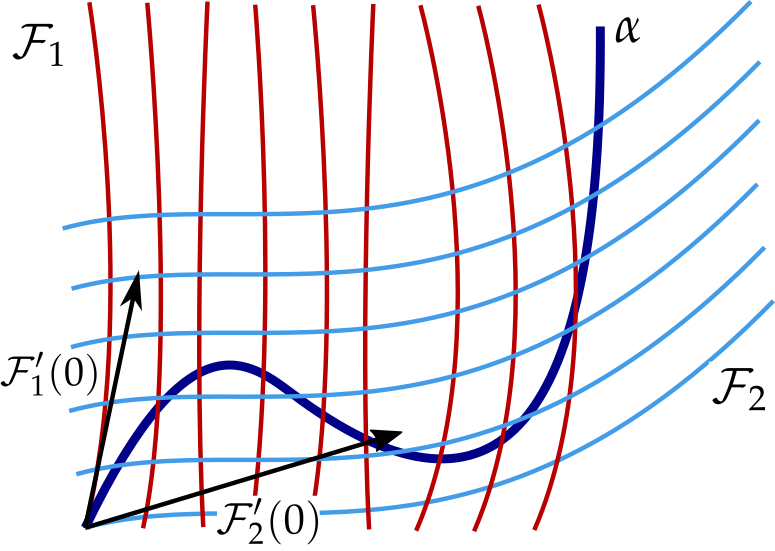}
  \end{center}
  \caption{The picture of the set-up in \ref{lem:flows-lemma}}
\end{figure}

\begin{proof}
  In order to estimate $\alpha'(0)$, we first note that 
  as $t$ gets really small, so must the $t_i$'s. Thus, 
  for small $t$, we can take a first-order expansion of
  $\alpha(t) = f_{i_1}(t_1,f_{i_2}(t_2,(\hdots,(f_{i_n}(t_n,0)))))$ when $t_1$ is small to get:
  \begin{align*}
    \alpha(t) &= f_{i_1}(0,f_{i_2}(t_2,(\hdots,(f_{i_n}(t_n,0))))) \\
    &\qquad   + t_1 \frac{\del f_{i_1}}{\del t}(0, f_{i_2}(t_2,(\hdots,(f_{i_n}(t_n,0)))))
    + o(t_1^2)\\ 
    &= f_{i_2}(t_2,(\hdots,(f_{i_n}(t_n,0))))
    + t_1 \frac{\del f_{i_1}}{\del t}(0, f_{i_2}(t_2,(\hdots,(f_{i_n}(t_n,0)))))
    + o(t_1^2)
  \end{align*}
  Where we understand $\frac{\del f_{i_1}}{\del t}$ to be the derivative 
  of $f_{i_1}$ with respect to the first co-ordinate.

  Expanding the $f_{i_2}(t_2,(\hdots,(f_{i_n}(t_n,0))))$ in terms 
  of $t_2$, we get:
  \begin{align} \label{eq:alpha_est1}
    \alpha(t) &= 
    f_{i_3}(t_3,(\hdots,(f_{i_n}(t_n,0))))
    + t_2 \frac{\del f_{i_2}}{\del t}(0, f_{i_3}(t_3,(\hdots,(f_{i_n}(t_n,0))))) \\
    &\quad + t_1 \frac{\del f_{i_1}}{\del t}(0,
    f_{i_2}(t_2,(\hdots,(f_{i_n}(t_n,0))))) + o(t^2)
  \end{align}

  If we continue to expand in this manner, we 
  obtain:
  \begin{align} \label{eq:alpha_est2}
    \alpha(t) &= f_{i_n}(t_n,0)
    +\sum_{j=1}^{n-1} t_j \frac{\del f_{i_j}}{\del t}
    (0,f_{i_{j+1}}(t_{j+1},(\hdots,(f_{i_n}(t_n,0))))) + o(t^2)\\
    &= t_n \frac{\del f_{i_n}}{\del t}(0,0)
    +\sum_{j=1}^{n-1} t_j \frac{\del f_{i_j}}{\del t} (0,f_{i_{j+1}}(t_{j+1},(\hdots,(f_{i_n}(t_n,0))))) + o(t^2)\\
  \end{align}

  We denote $v_j = \frac{\del}{\del t} f_j (0,0)$, 
  and notice that $f_j(t_j,0) = t_j v_j + o(t^2)$. 
  Thus, 
  \begin{align*}
    f_{i_{n-1}}(t_{n-1},f_{i_n}(t_n,0)) &= f_{i_{n-1}}(t_{n-1}, t_n
    v_{i_n} + o(t^2))\\
    &= f_{i_{n-1}}(t_{n-1},0) + D_0 f_{i_{n-1}}(t_{n-1},0) \cdot (t_n
    v_{i_n})
    + o(t^2))\\
    &= t_{i_{n-1}}v_{n-1} + t_n D_0 f_{i_{n-1}}(t_{n-1},0)\cdot
    v_{i_n} + o(t^2)\\
    &= t_{i_{n-1}}v_{n-1} + t_n D_0 (t_{n-1} v_{i_{n-1}} + o(t^2))
    \cdot v_{i_n}
    + o(t^2)\\
    &= t_{n-1}v_{i_{n-1}} + t_n t_{n-1} v_{i_{n-1}} + o(t^2) \\
    &= t_{n-1}v_{i_{n-1}} + o(t^2)
  \end{align*}
  Where we used the fact that all functions are smooth and 
  hence have bounded derivatives in a neighbourhood 
  of $0$. Note that we can continue this computation 
  to replace the composition terms in
  ~\ref{eq:alpha_est2} with simpler, linear terms:

  \begin{align} \label{eq:alpha_est3}
    \alpha'(0) &= t_n v_{i_n} +\sum_{j=1}^{n-1}
    t_j \frac{\del f_{i_j}}{\del t}
    (0,t_{j+1} v_{i_{j+1}} + o(t^2)) + o(t^2)\\
    &= t_n v_n + \sum_{j=1}^{n-1} t_j \left(v_{i_j} + D_0 f_{i_j}(0,0) \cdot
    (t_{j+1} v_{i_{j+1}} + o(t^2))\right) + o(t)\\
    &= \sum_{j=1}^n t_j v_{i_j} + o(t^2)
  \end{align}

  Since $W = \Span_{\ge 0}(\{v_i\}_i)$ is closed, 
  we also get that for any $t > 0$, we have 
  that $d(\alpha(t), W) = o(t^2)$. This means 
  that $\alpha'(0)$ is in $W$.
\end{proof}

\begin{Lemma} \label{lem:finite-stretch-vectors-span-hull}
  Assume that $\Lambda(X,Y)$ 
  has finitely-many completions, 
  and let $v \in \Env_0(X,Y)$. Then $v$ is a convex 
  combination of vectors in $SV_X(\Lambda(X,Y))$.
\end{Lemma}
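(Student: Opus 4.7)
The plan is to realize $v = v_g(X)$ as the $1$-jet of a path obtained by concatenating stretch segments along the finitely many completions $\lambda_1,\dots,\lambda_N \in M(\Lambda(X,Y))$, and then apply Lemma \ref{lem:flows-lemma} to extract $v$ as a non-negative combination of the stretch vectors $v_{\lambda_i}(X)$.

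The key structural input is the following: for every small $t > 0$, one should be able to write $g(t)$ as an iterated composition of stretch flows, i.e.\ $g(t) = f_{i_1}(t_1, f_{i_2}(t_2, \ldots f_{i_n}(t_n, X) \ldots))$ for some indices $i_1,\dots,i_n \in \{1,\dots,N\}$ and nonnegative times $t_1,\dots,t_n$ with $\sum_j t_j = t$, where $f_i$ denotes the stretch flow along $\lambda_i$. This should follow from the structure of Thurston geodesics in \cite{Thu86, DLRT20}: when $\Lambda(X,Y)$ is chain-recurrent with only finitely many maximal chain-recurrent extensions, the envelope $\Env(X,Y)$ is foliated, near $X$, by concatenations of stretch segments along the $\lambda_i$. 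The idea is to cover a small neighborhood of $X$ in $\Env(X,Y)$ by products of the finitely many stretch flows, then express $g(t)$ in these flow coordinates.

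With this representation in hand, I apply Lemma \ref{lem:flows-lemma} in a local Fenchel-Nielsen chart around $X$ with the foliations $\mc{F}_i$ taken to be the (analytic) stretch flows along $\lambda_i$. Since there are only finitely many $\lambda_i$, the cone $\Span_{\ge 0}\{v_{\lambda_i}(X)\}_i$ is finitely generated and hence closed. The lemma then yields $v = g'(0) \in \Span_{\ge 0}\{v_{\lambda_i}(X)\}$, so $v = \sum_i c_i v_{\lambda_i}(X)$ with $c_i \ge 0$.

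To upgrade this non-negative combination to a \emph{convex} combination, I pair both sides with the linear functional $D\log l_{\Lambda(X,Y)}$. Because $g$ is a geodesic from $X$ to $Y$, its $1$-jet $v$ maximally stretches $\Lambda(X,Y)$, so $D_v \log l_{\Lambda(X,Y)} = 1$. Likewise, each $\lambda_i$ contains $\Lambda(X,Y)$, so $D_{v_{\lambda_i}} \log l_{\Lambda(X,Y)} = 1$. Therefore $1 = \sum_i c_i$, completing the proof.

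The \textbf{main obstacle} is the first step: rigorously establishing that every $g(t)$ (or at least a sequence $g(t_k)$ with $t_k \to 0$) decomposes exactly as an iterated composition of the $f_{\lambda_i}$. If only approximate decompositions $\alpha_k \to g$ are available, a small strengthening of Lemma \ref{lem:flows-lemma} to handle limits of such concatenations is needed; the closedness of $\Span_{\ge 0}\{v_{\lambda_i}(X)\}$ guarantees the conclusion is preserved under limits. The intuition for why this decomposition exists is that in the finite-completions case the envelope is locally a piecewise-linear union of stretch flows, with no "exotic" directions of geodesic travel available.
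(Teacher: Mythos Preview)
Your approach is essentially the same as the paper's: both reduce to the decomposition of $g(t)$ as an iterated composition of the finitely many stretch flows (justified by Thurston's Theorem 8.5, which gives a concatenation of stretch segments from $X$ to any point $g(t)\in\Env(X,Y)$ along completions in $M(\Lambda(X,Y))$), and then apply Lemma \ref{lem:flows-lemma} using that a finitely-generated cone is closed.

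The one genuine difference is in the convexity step. The paper constructs an auxiliary concatenated path $\beta(t) = f_{i_1}(ka_1 t, f_{i_2}(ka_2 t, \dots))$ with $k\sum a_i = 1$, recomputes $\beta'(0) = kv$ via the same Taylor expansion as in Lemma \ref{lem:flows-lemma}, and then uses $\|\beta'(0)\|_{Th} = \|v\|_{Th} = 1$ to force $k=1$. Your argument---pairing against the length functional of $\Lambda(X,Y)$---is cleaner and avoids this second pass through the expansion. One small caveat: $\Lambda(X,Y)$ is only a topological lamination, so ``$D\log l_{\Lambda(X,Y)}$'' is not literally defined; you should fix a transverse measure $\mu$ supported on $\Lambda(X,Y)$ (which exists since $\Lambda(X,Y)$ is a Hausdorff limit of curves, hence carries a limiting measure), and then use $D_v\log l_\mu = D_{v_{\lambda_i}}\log l_\mu = 1$. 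With that adjustment your argument is complete and arguably more transparent than the paper's.

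Your identification of the ``main obstacle'' matches exactly what the paper glosses over with a citation to \cite{Thu86}; neither you nor the paper gives a self-contained argument for the decomposition, so on that point you are on equal footing.
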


\begin{proof}

  Let $v(t):[0,1] \to \mc{T}(S)$ be a geodesic path parametrized by
  arc length whose $1$-jet at $0$ is $v$. Since $v\in \Env_0(X,Y)$, we
  can freely assume that $v(t)$ lies in $\Env(X,Y)$ for any
  sufficiently small $t$.

  Working in co-ordinates $\vphi:\mc{T}(S_g) \to \R^{6g-6}$, for each 
  completion $\lambda$ of $\Lambda(X,Y)$, we get a foliation of 
  $\R^{6g-6}$ defined by the Thurston stretch line 
  corresponding to $\lambda$. This foliation also 
  comes with a natural \textit{flow direction} by 
  taking the forward corresponding to the stretch 
  path defined by $\lambda$.

  Note that when $\lambda$ contains a pair-of-pants decomposition,
  shearing co-ordinates are smoothly related to Fenchel-Nielsen
  co-ordinates on $\mc{T}(S_g)$ by the computations done in
  subsection \ref{subsec:twist-compute}. More generally, by Theorem A of
  \cite{Bon01} and computations in \cite{Gen15}, 
  we have that length functions of simple closed 
  curves are smooth in the shearing co-ordinates. Since 
  Teichmuller space is locally parametrized by length 
  functions of $6g-6$ simple closed curves, it follows 
  that we can think of shearing co-ordinates not just 
  as topological co-ordinates on $\mc{T}(S_g)$, 
  but as smooth co-ordinates as well. Stretch lines are smooth in
  shearing co-ordinates, since they can be realized as rays starting at the origin in the shearing co-ordinates corresponding to the lamination defining the stretch path \cite{Thu86}. Thus, stretch lines are also smooth in Fenchel-Nielsen co-ordinates. In
  particular, it follows that if $\lambda$ is a completion of a 
  maximal chain-recurrent lamination containing $\Lambda(X,Y)$, we get
  a smooth oriented foliation (which we will call $\mc{F}_{\lambda}$)
  of $\R^{6g-6}$ under any co-ordinates on Teichm\"{u}ller space.

  By Thurston's construction of geodesics using concatenation of
  stretch lines, it follows that for each $t$, $v(t)$ can be expressed
  as a concatenation of flows along the foliations $\mc{F}_{\lambda}$.
  Moreover, a careful reading of Theorem 8.5 of \cite{Thu86} actually
  says that we can choose these laminations to lie in
  $M(\Lambda(X,Y))$. 

  By Lemma \ref{lem:flows-lemma}, using the fact 
  that $SV_X(\Lambda(X,Y))$ is finite and hence $\Span_{\ge
  0}(SV_X(\Lambda(X,Y)))$ is closed, we get that $v\in \Span_{\ge
  0}(SV_X(\Lambda(X,Y)))$ and we write $v = \sum_{i=1}^{N} a_i
  v_{\lambda_i}$, where $\lambda_i \in M(\Lambda(X,Y))$.

  Next, we show that $v$ is a convex linear combination of 
  $SV_X(\Lambda(X,Y))$.
  We write $v = \sum_i a_i v_{\lambda_i}$, 
  and consider the path:
  \begin{align*}
    \beta(t) = f_1(k a_1t,f_2(ka_2t,(\hdots,(f_n(ka_nt,0)))))
  \end{align*}
  Where $k$ is chosen such that $k\sum a_i = 1$, and 
  where we denote $f_i(t,X) = \textrm{Stretch}(X,\lambda_i,t)$. 
  Note that $\beta$ is a length-parametrized geodesic, since
  $\Lambda(X,Y)$ is maximally stretched along it. By the 
  same computations in Lemma \ref{lem:flows-lemma}, we get 
  that $\beta'(0) = k v$. In particular, 
  since $v$ and $\beta'(0)$ are unit-length vectors, 
  it follows that $k = 1$, and $\sum_i a_i = 1$. 
\end{proof}

We now treat the cases when $\Lambda(X,Y)$ 
has infinitely many completions.

\begin{Lemma} \label{lem:cone-SV-closed}
  If $\Lambda$ is chain-recurrent or 
  empty, then $\Span_{\ge 0}(SV_X(\Lambda))$ 
  is closed.
\end{Lemma}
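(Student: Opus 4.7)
The plan is to show that the closure $\overline{SV_X(\Lambda)}$ in $T_X\mc{T}(S)$ is compact and lies in an open half-space, then invoke the classical convex-analysis fact that the non-negative span of such a set is closed.

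First I would verify compactness. By Fact~\ref{fact:CR-closed}, $CR(\Lambda)$ is compact in the Hausdorff topology. Combined with the continuity of the assignment $\mu \mapsto v_\mu(X)$ on $MCR(\Lambda)$ (which follows from the continuity of Thurston's stretch path construction with respect to the underlying lamination, see \cite{Thu86}), the image $SV_X(\Lambda)$ is relatively compact; its closure is therefore compact. Since every stretch vector has unit Thurston norm, $\overline{SV_X(\Lambda)}$ is bounded away from $0$.

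Next I would establish that $0 \notin \mathrm{conv}(\overline{SV_X(\Lambda)})$ in the case $\Lambda \neq \emptyset$. Using chain-recurrence, pick a simple closed multicurve $c$ Hausdorff-approximating $\Lambda$ (or contained in $\Lambda$ when available). For each $\mu \in MCR(\Lambda)$, the curve $c$ lies arbitrarily close to leaves of the maximally stretched lamination, so by continuity of length derivatives the linear functional $d\log l_c$ can be used to produce a nonzero $\varphi \in T_X^*\mc{T}(S)$ with $\varphi(v_\mu(X)) = 1$ for every $v_\mu(X) \in SV_X(\Lambda)$. This puts $SV_X(\Lambda)$ on a common affine hyperplane separating it from the origin. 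The case $\Lambda = \emptyset$ can be handled separately: one shows that $\Span_{\ge 0}(SV_X)$ already equals all of $T_X\mc{T}(S)$ (since $\bm{S}_X$ is a topological sphere around $0$ and contains stretch vectors in enough directions), which is trivially closed.

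With these two ingredients, Carath\'{e}odory's theorem lets me write every element of $\Span_{\ge 0}(\overline{SV_X(\Lambda)})$ as a non-negative combination of at most $\dim T_X\mc{T}(S) + 1$ elements; the separation from the origin forces the coefficients to stay bounded, so I can extract convergent subsequences of both laminations and coefficients to conclude closedness of $\Span_{\ge 0}(\overline{SV_X(\Lambda)})$. Finally I would identify this set with $\Span_{\ge 0}(SV_X(\Lambda))$ itself, by showing that each limiting vector in $\overline{SV_X(\Lambda)} \setminus SV_X(\Lambda)$ comes from a Hausdorff-limit lamination which, via Lemma~\ref{lem:v_lambda-well-defined} and continuity of stretch paths under choice of maximal extension, gives rise to a genuine stretch vector already lying in $SV_X(\Lambda)$.

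The hard part will be this last identification step: Hausdorff limits of laminations in $MCR(\Lambda)$ need not themselves be maximal chain-recurrent, so one must verify that the limiting stretch vector coincides with $v_\mu(X)$ for some maximal extension $\mu$. I expect this to require either an explicit analysis in shearing coordinates of how Thurston's cataclysm deformations vary when the underlying lamination degenerates, or an appeal to the face structure of $\bm{S}_X$ from \cite{HOP21} so that limiting vectors fall into a well-understood face corresponding to a chain-recurrent lamination.
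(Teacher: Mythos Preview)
Your overall framework---compactness of $\overline{SV_X(\Lambda)}$, separation from the origin, then Carath\'eodory---is a reasonable route to showing that $\Span_{\ge 0}(\overline{SV_X(\Lambda)})$ is closed. The difficulty, as you yourself flag, lies entirely in the final identification step, and there your proposal has a genuine gap.

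You propose to show that every limit of stretch vectors $v_{\lambda_n}$ with $\lambda_n \in MCR(\Lambda)$ is again a stretch vector $v_\mu$ for some $\mu \in MCR(\Lambda)$; in other words, that $SV_X(\Lambda)$ is already closed. This is precisely what the paper's Remark following the proof warns may be \emph{false}: the Hausdorff limit $\lambda$ of the $\lambda_n$ is chain-recurrent and contains $\Lambda$, but it need not be maximal---a sequence of maximal chain-recurrent laminations can limit to one with an ideal-square complementary region admitting a further chain-recurrent diagonal. Your fallback suggestion of invoking the face structure from \cite{HOP21} is not fleshed out and would in any case import machinery roughly as strong as what is being proved. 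The continuity assertion $\mu \mapsto v_\mu(X)$ that you rely on in step one is likewise delicate for exactly this reason.

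The paper avoids the problem by \emph{not} attempting to realize the limit $v$ as a single stretch vector. Instead it shows only that $v$ lies in the \emph{convex hull} of $SV_X(\Lambda)$, which is enough. The key observation is that the Hausdorff limit $\lambda$, while possibly not maximal chain-recurrent, still has only finitely many completions (this is inherited from the $\lambda_n$). One then chooses compatible completions $\lambda_n^i \to \lambda^i$, applies Arzel\`a--Ascoli to the stretch paths $\mathrm{Stretch}(X,\lambda_n^i,t)$ to obtain a limit path $S_\infty$ with $S_\infty'(0)=v$ and $\lambda \subset \Lambda(X,S_\infty(t))$, and finally invokes the already-established \emph{finite} case, Lemma~\ref{lem:conv-hull-finite-completions}, to place $v$ inside $\mathrm{conv}\bigl(SV_X(\Lambda(X,S_\infty(t)))\bigr) \subset \mathrm{conv}(SV_X(\Lambda))$. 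This bootstrapping from the finite case is the idea your outline is missing.

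A secondary issue: your treatment of $\Lambda = \emptyset$ asserts $\Span_{\ge 0}(SV_X) = T_X\mc{T}(S)$ on the grounds that stretch vectors exist ``in enough directions''. Making this precise without already knowing Theorem~\ref{thm:stretch-vectors-extremal} is not obvious, and risks circularity. The paper's argument handles the empty case uniformly with the nonempty one and avoids this.
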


\begin{proof}
  We will show that for $\lambda_n \in MCR(\Lambda)$, if
  $v_{\lambda_n} \in SV_X(\Lambda)$ converge to some $v$, then $v$ is
  in the convex hull of $SV_X(\Lambda)$. By \ref{fact:CR-closed}, up
  to subsequence, there exists some $\lambda \in CR(\Lambda)$ such
  that $\lambda_n \to \lambda$ in the Hausdorff topology.

  By maximality of $\lambda_n$, 
  $S\ssm \lambda_n$ has at most finitely-many 
  completions to a triangulation, and 
  since $\lambda$ is a Hausdorff limit of $\lambda_n$, it shares this
  property. Let $\lambda^1,\hdots,\lambda^k \in M(\lambda)$ be the
  finitely-many completions of maximal chain-recurrent laminations 
  containing $\lambda$.

  Since the complementary components of $\lambda_n$ 
  stabilize close to the complementary components of 
  $\lambda$, it follows that for any sufficiently large $n$, 
  there exist completions of $\lambda_n^i \in M(\lambda_n)$ 
  which Hausdorff converge to $\lambda^i$. We can do 
  this, for example, by adding in leaves 
  into the complementary components of $\lambda_n^i$ 
  which converge to the added leaves of $\lambda^i$.

  For any $t$, we consider the family 
  $S_n(t) = \mathrm{Stretch}(X, \lambda_n^i,t)$. Since 
  $S_n(t)$ are smooth geodesics, it follows that 
  they have uniformly bounded first derivatives. By 
  the Arzela-Ascoli theorem, up to subsequence, $S_n(t)$ 
  converge to a continuous path, $S_{\infty}(t)$ in 
  $\mc{T}(S)$, starting at $X$. Moreover, $S_{\infty}$ 
  is differentiable at $0$, and has derivative 
  equal to $v$. Note that 
  because $\lambda_n^i$ converge to $\lambda^i$, 
  and $\lambda \subset \lambda^i$, 
  it follows that $\lambda \subset \Lambda(X,S_{\infty}(t))$, 
  and so $\Lambda(X,S_{\infty}(t))$ has 
  finitely-many completions.

  By \ref{lem:conv-hull-finite-completions}, 
  we have that $\Env_0(X, S_{\infty}(t))$ 
  is the convex hull of $SV_X(\Lambda(X,S_{\infty}(t)))$, 
  which is closed. 
  Since this is true for all $t$, a similar 
  Taylor series argument as in the previous 
  lemmas above shows that, 
  in fact, $v = \frac{d}{dt}|_{t=0} S_{\infty}(t)$ 
  lies in the convex hull of $SV_X(\Lambda(X,S_{\infty}(t)))$, 
  as desired.
\end{proof}

The proof of Lemma \ref{lem:conv-hull-all-completions} 
follows in the same way as the proof of 
Lemma \ref{lem:conv-hull-finite-completions}, 
where we now use Lemma \ref{lem:cone-SV-closed} 
to meet the conditions of 
Lemma \ref{lem:flows-lemma}.

\begin{Remark}
  The reason we split Lemma \ref{lem:conv-hull-all-completions} 
  into two lemmas is a bit of a subtlety. In the argument 
  above, we showed that the non-negative 
  span of $SV_X(\Lambda(X,Y))$ is closed. A priori, 
  the entire proof of Lemma \ref{lem:cone-SV-closed} 
  could have been skipped if we knew that $SV_X(\Lambda(X,Y))$ 
  was closed.

  To argue this, let $v_{\lambda_n}$ be a 
  converging sequence of stretch vectors 
  converging to some $v$. One should show that the Hausdorff 
  limit of $\lambda_n \in MCR(\Lambda(X,Y))$ is a 
  lamination $\lambda \in MCR(\Lambda(X,Y))$, and then use 
  the fact that stretch vectors change continuously 
  in their defining laminations to get that 
  $v = v_{\lambda}$. It is easy to see that 
  $\lambda$ must be chain-recurrent, and we must show that it 
  is maximal amongst all chain-recurrent laminations,

  Showing that $MCR(\Lambda(X,Y))$ is 
  closed is non-trivial, and may in fact be false. It would follow
  from Remark 2.9 of \cite{HOP21}, that says that the complementary
  components of a maximal chain-recurrent lamination consists of ideal
  triangles, once-punctured monogons, or once-punctured bigons.

  The argument is that Remark 2.9 of \cite{HOP21} tells us that 
  the complementary components of $\lambda$ must also 
  be triangles or once-punctured monogons and bigons, and 
  hence $\lambda$ is maximal. However, a proof of the remark 
  is not furnished anywhere in literature, and it is not clear 
  if it is even true. It is possible that $\lambda_n$ 
  each have a complementary component that is an ideal 
  square, and that in the limit, $\lambda$ 
  has an ideal square as a complementary component, and one 
  of the diagonal leaves can be added to $\lambda$ 
  preserving chain-recurrence.
\end{Remark}

It remains to show that $SV_X(\Lambda(X,Y))$ 
consists of extremal vectors in $\Env_0(X,Y)$. 
This follows immediately from the proof of 
Theorem 5.2 in \cite{Thu86}. We 
provide the main argument, but refer the 
reader to details in \cite{Thu86}. 
We prove Lemma \ref{lem:stretch-vectors-not-internal}:

\begin{Lemma*}
  Let $\Lambda$ be chain-recurrent or empty, 
  and let $\lambda \in M(\Lambda)$, 
  and let $v_{\lambda} \in SV_X(\Lambda)$ be 
  a convex combination:
  $v_{\lambda} = \sum_i a_i v_{\lambda_i}$, where 
  $\lambda_i \in M(\Lambda)$. Then 
  $v_{\lambda_i} = v_{\lambda}$ for all $i$.
\end{Lemma*}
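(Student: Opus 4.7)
The plan is to reduce the statement to a maximality argument on chain-recurrent laminations, and to get the required inclusion of chain-recurrent laminations from a linear/convexity argument applied to the derivatives of log-length functions. First, by Lemma \ref{lem:v_lambda-well-defined}, $v_{\lambda}$ depends only on the underlying chain-recurrent sublamination. So I will write $\lambda \in M(\Lambda)$ as a completion of $\mu \in MCR(\Lambda)$, and $\lambda_i \in M(\Lambda)$ as a completion of $\mu_i \in MCR(\Lambda)$, and abbreviate $v_{\lambda}=v_{\mu}$, $v_{\lambda_i}=v_{\mu_i}$. It suffices to show that $\mu = \mu_i$ for every $i$ with $a_i > 0$.

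Next I will set up the key inequality. For any unit tangent vector $v\in \bm{S}_X$ and any simple closed curve $\alpha$, by definition of the Thurston norm one has $D_v \log l_{\alpha} \le \|v\|_{Th}=1$. On the other hand, the stretch path $X_t=\mathrm{Stretch}(X,\lambda,t)$ maximally stretches the chain-recurrent lamination $\mu$: by Thurston's construction in \cite{Thu86}, there is a sequence of simple closed multicurves $\gamma_n$ converging to $\mu$ in the Hausdorff topology such that
\begin{align*}
  \lim_{n\to\infty} D_{v_{\mu}} \log l_{\gamma_n} \;=\; 1.
\end{align*}
(Indeed, along $X_t$ the horocyclic length of $\mu$ scales exactly as $e^t$, and this horocyclic length is the limit of appropriately normalized geodesic lengths of Hausdorff-approximating multicurves.) So $v_{\mu}$ is infinitesimally extremal for every such $\gamma_n$ in the limit.

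Now I apply linearity of $D$ to the convex decomposition. For every $n$,
\begin{align*}
  D_{v_{\mu}} \log l_{\gamma_n} \;=\; \sum_i a_i \, D_{v_{\mu_i}} \log l_{\gamma_n}.
\end{align*}
The left hand side tends to $1$; each summand on the right is bounded above by $1$; the weights satisfy $a_i\ge 0$ and $\sum_i a_i=1$. Hence for every index $i$ with $a_i>0$, I must have $D_{v_{\mu_i}} \log l_{\gamma_n} \to 1$. That is, the stretch path $\mathrm{Stretch}(X,\lambda_i,t)$ also infinitesimally stretches the multicurves $\gamma_n$ at the maximal rate $1$; taking the Hausdorff limit, $\mu$ is contained in the maximally stretched lamination of $\mathrm{Stretch}(X,\lambda_i,t)$. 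By Theorem 8.5 of \cite{Thu86}, the maximally stretched lamination of this stretch path is exactly $\mu_i$, giving the inclusion $\mu \subseteq \mu_i$.

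Finally, both $\mu$ and $\mu_i$ lie in $MCR(\Lambda)$, which by definition consists of chain-recurrent laminations containing $\Lambda$ that are maximal with respect to inclusion among such. Therefore $\mu\subseteq \mu_i$ forces $\mu=\mu_i$, and another application of Lemma \ref{lem:v_lambda-well-defined} yields $v_{\lambda_i}=v_{\mu_i}=v_{\mu}=v_{\lambda}$. The main obstacle I anticipate is the horocyclic/Hausdorff approximation step: one must produce multicurves $\gamma_n\to\mu$ whose log-length derivatives along the stretch path tend to $1$, rather than just whose lengths tend to scale by $e^t$. This is the infinitesimal incarnation of Thurston's fact that stretch paths are optimal for their defining lamination, and is precisely what is packaged in the proof of Theorem 5.2 of \cite{Thu86}; the remaining bookkeeping is routine.
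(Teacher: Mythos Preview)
Your proposal is correct and follows essentially the same approach as the paper: both use Thurston's fact that $D_{v_\lambda}\log l_{\gamma_n}\to 1$ for multicurves $\gamma_n$ Hausdorff-converging to $\lambda^{CR}$, combine it with linearity of $D_v$ and the norm bound $D_{v_{\lambda_i}}\log l_{\gamma_n}\le 1$, and conclude that $\lambda^{CR}$ is maximally stretched along each $v_{\lambda_i}$, whence equality of the chain-recurrent parts by maximality. The only cosmetic difference is that you argue directly for the inclusion $\mu\subseteq\mu_i$, whereas the paper phrases the same step as a proof by contradiction (and passes explicitly through a measured-lamination limit to justify that $\lambda^{CR}$ lies in the maximally stretched set).
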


\begin{proof}
  For a lamination $\mu$, denote by $\mu^{CR}$ the maximal
  chain-recurrent sublamination of $\mu$. To prove the lemma, suffices
  to show that $\lambda_i^{CR} = \lambda^{CR}$. 

  Suppose not, and let $\alpha_n$ be a sequence of 
  simple closed multi-curves Hausdorff-converging to 
  $\lambda^{CR}$. Denoting the curve complex 
  of $S$ by $\mathcal{C}(S)$, and fixing $X\in \mc{T}(S)$, 
  we define a map from 
  $T\mc{T}(S) \times \mathcal{C}(S)$ by sending 
  $v,\alpha$ to $D_v \log l_{\alpha}$. 
  This map is linear in the first agument, and is continuous in the
  second argument, where we think of $\mathcal{C}(S)$ as a space
  endowed with the Hausdorff topology on $X$.

  By Thurston's construction of $\mathrm{Stretch}(X,\lambda,t)$, it
  follows that if $\alpha_n$ Hausdorff converge to $\lambda^{CR}$,
  then $\lim_{n\to \infty} D_{v_{\lambda}} \log(l_{\alpha_n}) = 1$. 
  We argue that for any $i$, 
  $\lim_{n\to \infty} D_{v_{\lambda_i}} \log(l_{\alpha_n}) < 1$, 
  and the contradiction follows.

  Indeed, if $\lim_{n\to \infty} D_{v_{\lambda_i}} \log(l_{\alpha_n})
  = c \ge 1$, then there would exist some weights $w_n$ on 
  $\alpha_n$ and some measured lamination 
  $\lambda^m$ whose support contains $\lambda^{CR}$, 
  for which $\alpha_n$ converge to $\lambda^m$ in 
  measure. Moreover, for this measured lamination, 
  we would have $D_{v_{\lambda_i}} \log(l_{\lambda^m}) = c$. 
  This means that $\lambda^{CR}$ is maximally stretched 
  along $v_{\lambda_i}$, a contradiction.
\end{proof}

\subsection{Proving Theorem \ref{thm:stretch-vectors-extremal}}

We now prove Theorem \ref{thm:stretch-vectors-extremal}:

\begin{proof}
  Let $v$ be an extremal point in $\bm{S}_X$. By Remark 
  1.12 and Theorem 1.11 of \cite{PW22}, 
  $v$ must lie in $\Env_0(X, Y)$ 
  for some $Y \in \mc{T}(S)$. In particular, it must be 
  extremal in $\Env_0(X, Y)$, and hence, 
  by Theorem \ref{thm:conv-hull-env0}, $v \in SV_X$. 
  Conversely, by Lemma \ref{lem:stretch-vectors-not-internal}, 
  any stretch vector must be extremal.
\end{proof}

\subsection{Computing Twisting from Shearing}
\label{subsec:twist-compute}

We compute the twist parameters along Thurston geodesics defined by
laminations that are completions of pair of pants decompositions of
$S$. A large portion of this work was independently done in
\cite{HOP21}, but we leave it here for the sake of exposition. 
We begin by reviewing some notation that will help us 
later on.

\subsection{Introduction and Notation}
\label{sec:inf-env-notation}

Let $\mc{C} = \{c_1,\hdots, c_{N}\}$ be a pair-of-pants
decomposition of $S$, and let $\lambda$ be a chain-recurrent
completion of $\mc{C}$ to a triangulation. Let $X\in \mc{T}(S)$, and let
$X_t$ be the Thurston geodesic $\textrm{Stretch}(X,\lambda,t)$.  To
compute the twisting along $X_t$, we must compute 
$\tau_{c_i}(t)$ as a function of
$s_{\lambda}$. This can be done from the definition, but depends on
the topological type of $\lambda$. Let $c\in \mc{C}$ be some curve, and
$\tilde c$ a lift of it. Let $P_1$, $P_2$ be its adjacent
pairs-of-pants, with lifts $\tilde P_i$ adjacent to $\tilde c$. 

Let $p_1(t)$ and $q_2(t)$ be defined as in Section \ref{sec:intro}
be the distinguished points used to compute $s_c(X_t)$ and
$\tau_c(X_t)$, where we choose lifts carefully so that $p_i$ and $q_i$
lie on the same lift $\tilde c$ of $c$.

We define $\Delta_{P_i,c,\lambda}(t)$ as the signed distance from
$q_i(t)$ to $p_i(t)$, where the sign is positive if to get from
$q_i(t)$ to $p_i(t)$, one has to turn left from the perspective of
$\tilde P_i$. The functions $\Delta_{P_i,c,\lambda}(t)$ are actually
intrinsic to $P_i$, and are well-defined up to a choice of the lifts
of all of the curves in $\mc{C}$. In particular,
$\Delta_{P_1,c,\lambda}(t)+\Delta_{P_2,c,\lambda}(t) = s_c(X_t)
- \tau_c(X_t)$, so we devote the rest of this section to computing
$\Delta_{P,c,\lambda}(t)$, where $P$ is a pair of pants in $S\ssm
\mc{C}$, $c\in \mc{C}$ and $\lambda$ is a triangulation of $P$.

\begin{Definition}
  A \textbf{geodesic triangulation} of a hyperbolic pair of pants $P$ 
  is a decomposition of $P$ into two ideal triangles, 
  whose edges are glued together.
\end{Definition}

\begin{Lemma}
  Let $P$ be a hyperbolic pair of pants, then there are 
  exactly $32$ geodesic triangulations of $P$ up to 
  boundary-preserving isomorphism of $P$.
\end{Lemma}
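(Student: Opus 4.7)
The plan is to enumerate the geodesic triangulations directly, exploiting the combinatorial structure of spiraling behavior at the boundary components.

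First, I analyze how bi-infinite geodesic arcs spiral into the three boundary components of $P$. At each boundary curve, disjointness of arcs constrains their spiraling directions: I argue that all arcs spiraling into a given boundary must do so in the same rotational direction, since two arcs spiraling in opposite directions are forced to intersect in any collar neighborhood. This yields $2^3 = 8$ possible spiraling profiles at the three boundaries.

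Second, for each fixed spiraling profile I enumerate the combinatorial types of arc systems compatible with it. Using the dual graph of the triangulation---a connected graph on $2$ vertices (the triangles) with $3$ edges (the arcs) in which each vertex has degree $3$---a degree-count shows that there are exactly two topological types of dual graph: the theta graph (three parallel edges between the two vertices) and the graph with a single edge plus one loop at each vertex. The theta type corresponds to the \emph{standard} triangulation with one arc between each pair of distinct boundaries, contributing one triangulation per spiraling profile. The edge-plus-two-loops type corresponds to arc systems with a loop in $P$ based at one boundary, together with two arcs from that boundary to each of the remaining two. I must verify which of these are realizable as pairwise disjoint geodesics with the given spiraling data; after the verification I obtain three such triangulations per spiraling profile, one for each choice of the loop's base boundary. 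A direct endpoint-distribution analysis rules out the a priori possible partitions $(3,2,1)$ of arc endpoints among boundaries, as the resulting arc configurations cannot be made pairwise disjoint.

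Combining, each spiraling profile admits $1 + 3 = 4$ triangulations, for a total of $8 \cdot 4 = 32$.

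The main obstacle is the second step: one must carefully determine which configurations are realizable as pairwise disjoint geodesics for a prescribed spiraling profile. In particular, for the edge-plus-two-loops type, the key point is that the enclosed boundary of the loop is determined by the spiraling direction at its base boundary, a fact requiring geometric analysis in the universal cover to compare the endpoints at infinity of the corresponding lifts.
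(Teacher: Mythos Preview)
Your plan arrives at precisely the $8 \times 4 = 32$ decomposition the paper records. The paper does not actually prove the lemma: it cites Section~3.3 of \cite{PT07} and Section~2.6 of \cite{HP92}, and then states the characterizing data---twist direction at each cuff ($2^3 = 8$ choices) and leaf-count distribution $(2,2,2)$ or a permutation of $(4,1,1)$ ($4$ choices). Your spiraling profiles are exactly the paper's twist directions, and your theta versus edge-plus-two-loops dichotomy recovers exactly the $(2,2,2)$ versus $(4,1,1)$ split, so you are supplying an argument where the paper only gives a reference.

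Two small corrections to the write-up. First, in the edge-plus-two-loops dual graph, the arc you call the ``loop in $P$'' (the arc $\gamma_i\to\gamma_i$) is actually the single \emph{edge} of the dual graph, not one of its loops: it is the only arc with distinct triangles on its two sides, while the arcs $\gamma_i\to\gamma_j$ and $\gamma_i\to\gamma_k$ each have the same triangle on both sides and are the dual-graph loops. This does not affect the count. Second, your ``main obstacle'' is slightly mis-framed: in a pair of pants there is a \emph{unique} isotopy class of essential simple arc from a cuff to itself (it necessarily separates the other two cuffs), so there is no ``which boundary is enclosed'' dichotomy to resolve. What genuinely needs checking is that each of the four topological arc systems admits, for every one of the eight spiraling profiles, a unique realization by pairwise disjoint simple geodesics; once that is done, the $(3,2,1)$ endpoint partition is excluded automatically by your dual-graph dichotomy and need not be treated separately.
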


This fact is stated in the beginning of section 3.3 of \cite{PT07}, 
and follows from the discussion in section 2.6 of \cite{HP92}. 

In fact, let $P$ be a pair of pants with 
cuff curves $\gamma_1$, $\gamma_2$, and $\gamma_3$. 
The topological type of a geodesic triangulation of $P$ is 
precisely characterized by the following information
\begin{itemize}
  \item The twist direction at every cuff $\gamma_i$ of $P$ 
    (there are two possibilities per cuff, so eight options in total)
  \item The number of leaves converging to 
    $\gamma_1, \gamma_2, \gamma_3$, with 
    possibilities $(2,2,2), (1,1,4), (1,4,1)$ and $(4,1,1)$.
\end{itemize}

\begin{Definition}
  If $\gamma$ is a cuff curve of $P$, 
  and $\lambda$ is a triangulation of $P$, then we say that 
  \begin{itemize}
    \item $\lambda$ is $3$-symmetric if $\lambda$ has precisely 
      two leaves converging to $\gamma$
    \item $\lambda$ is $2$-symmetric around $\gamma$ 
      if $\lambda$ has precisely four leaves converging 
      to $\gamma$
    \item $\lambda$ is asymmetric around $\gamma$ 
      if $\lambda$ has precisely one leaf converging 
      to $\gamma$
  \end{itemize}
\end{Definition}

\begin{figure}  
  \begin{center}
  \includegraphics[width=0.8\textwidth]{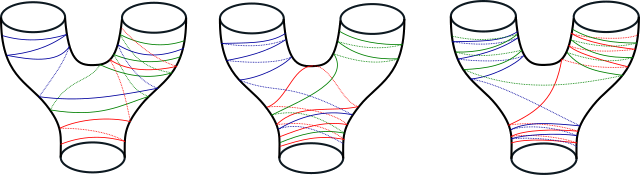}
\end{center}
  \caption{Asymmetric, $2$-symmetric, and $3$-symmetric laminations $\lambda$. The curve $\gamma$ is the bottom one}
  \label{fig:completions}
\end{figure}

We compute $\Delta_{c,P,\lambda}$ in each of these 
cases. We will first fix some notation in all of the following
computations.

Let $\gamma_i$ be the three boundary components of 
$P$, and we will always assume that $c = \gamma_1$. 
Let $\gamma_{ij}$ be the leaf of $\lambda$ which is
asymptotic to both $\gamma_i$ and $\gamma_j$.  For ease of notation
later on, we define $l_i = l_{\gamma_i}(X_0)$, and $s_i
= s_{\gamma_i}(X_0)$, $s_{ji} = s_{ij} = s_{\gamma_{ij}}(X_0)$. Note 
that in the different possibilities for $\lambda$, not all of 
the variables $s_{ij}$ have meaning. For example, 
if $\lambda$ is $2$-symmetric, $\gamma_{23}$ 
is replaced by $\gamma_{22}$.
\subsection{$\lambda$ is $3$-symmetric around $c$}
In this subsection, we compute $\Delta_{P,c,\lambda}$, where 
$\lambda$ is a $3$-symmetric triangulation which twists left 
at every cuff. We will compute $\Delta_{P,c,\lambda}(X_0)$ 
in terms of the shearing co-ordinates of $X_0$. 

We fix an identification of $\tilde S$ with the
upper-half plane such that $\tilde \gamma_1$ is the line
$\{(0,t):t\in\R\}$, oriented so that $P$ lies to the left of this
line.

We know that $\gamma_{13}$ is asymptotic to 
$\gamma_1$, and twists left, meaning that there is a 
lift of it of the form $\{(x,t):t\in \R\}$, for some 
$x<0$. Similarly, $\gamma_{12}$ has a lift of the form 
$\{(y,t):t\in \R\}$, oriented so that $(0,0)$ 
lies to the right of them. 

Up to rescaling the identification 
above, we can assume that $y = x+1$. This will allow 
us to explicitly compute $x$ later on. In our choice 
of specified lifts for the shearing co-ordinates, 
we choose these lifts of $\gamma_{ij}$ with this 
orientation. This choice 
will not matter after differentiating, as a 
different choice of lifts will only change 
the answer by an additive constant. 

The deck action of $[\gamma_1]^{-1}$ on $\HH^2$ 
sends the vertical lines $\{(a,t):t\in \R\}$ to 
$\{(e^{-l_1}a,t):t\in \R\}$, and 
in particular, must send one lift of 
$\gamma_{ij}$ to another one. Drawing two 
lifts of $\gamma_{23}$, one from $(x,0)$ to $(x+1,0)$ 
and one from $(x+1,0)$ to $(e^{-l_1}x,0)$, 
we obtain lifts of the complementary triangles 
of $\lambda$. 
\begin{figure} 
  \begin{center}
  \begin{tikzpicture}[scale=2]
    \def\x{-4}
    \def\y{-3}
    \def\r{(\y-\x)/2} 
    \def\s{(\x/e - \y)/2}

    \draw[thick,->] (0,0) -- (0,3);
    \draw[thick,<->] (0.25,0) -- (-4.3,0);

    \node[ForestGreen] at ({\y},3.2) {$\tilde \gamma_{12}$};
    \node[OrangeRed] at ({\x},3.2) {$\tilde \gamma_{13}$};
    \node at (0,3.2) {$\tilde \gamma_{1}$};

    \draw[thick, ForestGreen] ({\y}, 0) arc (0:180:{1/3});
    \draw[thick, NavyBlue] ({\y}, 0) arc (0:180:{7/24});
    \draw[thick, ForestGreen] ({\y}, 0) arc (0:180:{13/48});
    \draw[thick, OrangeRed] ({\y-2/3}, 0) arc (0:180:{1/6});

    \draw[thick, OrangeRed] ({\x/e}, 0) arc (0:180:{1/2});
    \draw[thick, NavyBlue] ({\x/e}, 0) arc (0:180:{3/8});
    \draw[thick, OrangeRed] ({\x/e}, 0) arc (0:180:{5/16});
    \draw[thick, ForestGreen] ({\x/e-1}, 0) arc (0:180:{(\x/e-1-\y)/2});
    \draw[thick, ->] ({\y}, 0) arc (0:180:{1/4}) node[anchor=north, pos=0.5] {$\gamma_2$};
    \draw[thick, ->] ({\x/e}, 0) arc (0:180:{1/4}) node[anchor=north, pos=0.5] {$\gamma_3$};
    
    \draw[thick, OrangeRed, ->] ({\x},0) -- ({\x},3);
    \draw[thick, ForestGreen,->] ({\y},0) -- ({\y},3);
    \draw[thick, NavyBlue] ({\x}, 0) arc (180:0:{(\y - \x)/2});
    \draw[thick, BurntOrange] ({(\x + \y)/2},{\y - \x}) circle ({(\y-\x)/2});

    \foreach \n in {1,2,3,4}
    {
      \draw[thick, OrangeRed,->] ({\x/(e^\n)},0) -- ({\x/(e^\n)},3);
      \draw[thick,ForestGreen,->] ({\y/(e^\n)},0) -- ({\y/(e^\n)},3);
      \draw[thick,NavyBlue] ({\x/e^\n}, 0) arc (180:0:{(\y/e^\n - \x/e^\n)/2});
      \draw[thick,NavyBlue] ({\y/e^(\n-1)}, 0) arc (180:0:{(\x/e^\n - \y/e^(\n-1))/2});
    \draw[thick,BurntOrange, ->] ({(\x/(e^\n) + \y/(e^\n))/2},{\y/(e^\n) - \x/(e^\n)}) circle ({(\y/(e^\n)-\x/(e^\n))/2});
    \draw[thick,BurntOrange, ->] ({(\x/(e^\n) + \y/(e^(\n-1)))/2},{-\y/(e^(\n-1)) + \x/(e^\n)}) circle ({(\y/(e^(\n-1))-\x/(e^\n))/2});
    }

    \node at ({\x},-0.2) {$x$};
    \node at ({\y},-0.2) {$x+1$};
    \node at ({\x/e},-0.2) {$e^{-l_1}x$};
    \node at ({(\x+\y)/2},{(\y-\x)/2 + 0.2}) {$q$};
    \fill ({\x},0) circle (0.02);
    \fill ({\y},0) circle (0.02);
    \fill ({\x/e},0) circle (0.02);
    \fill ({(\x+\y)/2},{(\y-\x)/2}) circle (0.02);

    \node at ({(\x + \y)/2}, 2.5) {$\Delta_1$};
    \node at ({(\y + \x/e)/2}, 2.5) {$\Delta_2$};

    \draw[thick,|-|] ({\y},{2*\r}) -- ({\y},{2*\s}) node[fill=white, anchor=center, pos=0.5, rotate=90] {$s_{12}$};
    \draw[thick,|-|] ({\x/e},{2*\s}) -- ({\x/e},{1/e}) node[fill=white, anchor=center, pos=0.5,rotate=90] {$s_{13}$};

  \end{tikzpicture}
\end{center}
  \caption{Lifting a $3$-symmetric lamination}
  \label{fig:3-symmetric-lift}
\end{figure}
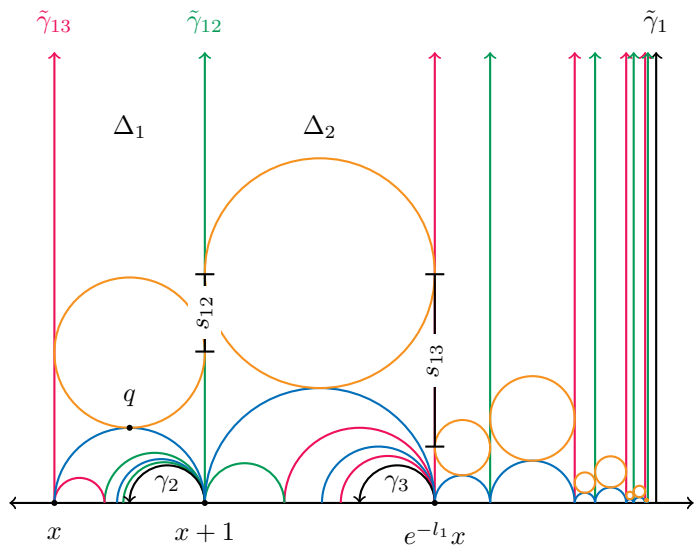

In particular, one can readily see from this picture 
that $s_{12} + s_{13} = -l_1$, and in general, 
$|s_{ij} + s_{jk}| = l_j$, where the sign is 
positive if $\gamma_{ij}$ and $\gamma_{jk}$ twist 
to the right at $\gamma_j$, and is negative 
otherwise. This is a special case of Lemma 3.2 in 
\cite{BBFS09}

By definition, the shearing co-ordinates of $\gamma_{12}$ is the
difference between the medians along $\tilde \gamma_{12}$ of the two
triangles drawn ($\Delta_1$ and $\Delta_2$). This gives us the
(Euclidean) radius of the incircle on the right, which tells us that
$e^{-l_1}x - (x+1) = e^{s_{12}}$.

In particular,
\begin{align*}
  x = \frac{1 + e^{s_{12}}}{e^{-l_1} - 1}
\end{align*}

The next step is to find the endpoints of $\tilde \gamma_2$ 
in terms of $x$. Since $\gamma_{12}$ is 
asymptotic to $\gamma_2$, it follows that one of 
the endpoints of $\gamma_2$ is $x+1$. The 
second endpoint can be computed by taking the 
limit $[\gamma_2]^n \circ x$, where $[\gamma_2]$ 
is the deck transformation corresponding to 
$\gamma_2$. Call this second endpoint $p^*$. We 
now wish to compute $p^*$ in terms of $x$ and the 
shearing co-ordinates.

To do this, we apply the isometry $\vphi(z) = \frac{x-z}{z-(x+1)}$, 
which will send the lift of $\gamma_2$ at 
$[p^*, x+1]$ to an arc $[\vphi(p^*), \infty]$. This 
isometry also sends the arc $[x,x+1]$ to the arc $[0,\infty]$, 
and we can now use shearing co-ordinates to find that its 
first translate under $[\gamma_2]$ must lie on $[w,\infty]$ 
Where $w = e^{s_{23}} + e^{s_{23}}e^{s_{12}}$ (see picture):

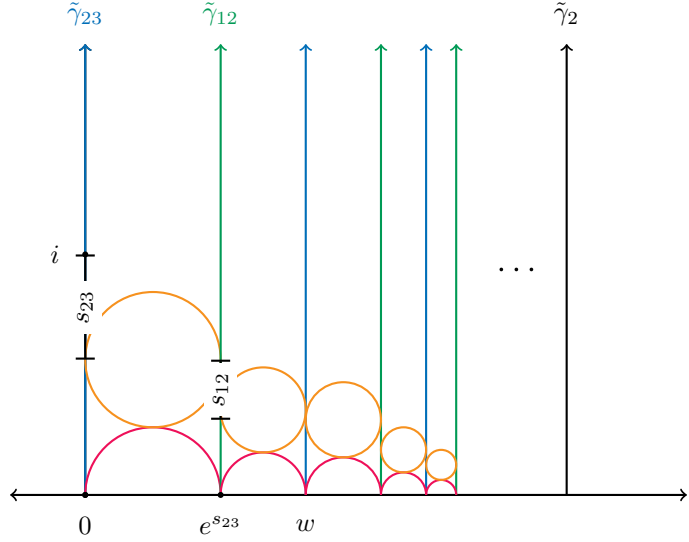
\begin{figure} 
  \begin{center}
  \begin{tikzpicture}[scale=2]
    \def\x{0}
    \def\y{0.9} 
    \def\r{(\y-\x)/2}
    \def\s{(-4/e + 3)/2}
    \def\z{\y*(1 + 2*\s-2*\r)}

    \draw[thick,->] (0,0) -- (0,3);
    \draw[thick,<->] (-0.5,0) -- (4,0);
    \draw[thick, ->] ({3.2},0) -- ({3.2},3);
    \node at (3.2, 3.2) {$\tilde \gamma_{2}$};

    \node[NavyBlue] at ({\x},3.2) {$\tilde \gamma_{23}$};
    \draw[thick, NavyBlue, ->] ({\x},0) -- ({\x},3);
    \draw[thick, OrangeRed] ({\x}, 0) arc (180:0:{(\y - \x)/2});
    \draw[thick, BurntOrange] ({(\x + \y)/2},{\y - \x}) circle ({(\y-\x)/2});
    \node[ForestGreen] at ({\y},3.2) {$\tilde \gamma_{12}$};
    \draw[thick, ForestGreen,->] ({\y},0) -- ({\y},3);

    \draw[thick, NavyBlue, ->] ({\z},0) -- ({\z},3);
    \draw[thick, OrangeRed] ({\y}, 0) arc (180:0:{(\z - \y)/2});
    \draw[thick, BurntOrange] ({(\y + \z)/2},{\z - \y}) circle ({(\z-\y)/2});

    \def\A{\z + 0.5}
    \draw[thick, ForestGreen, ->] ({\A},0) -- ({\A},3);
    \draw[thick, OrangeRed] ({\z}, 0) arc (180:0:{(\A - \z)/2});
    \draw[thick, BurntOrange] ({(\z + \A)/2},{\A - \z}) circle ({(\A-\z)/2});

    \def\B{\A + 0.3}
    \draw[thick, NavyBlue, ->] ({\B},0) -- ({\B},3);
    \draw[thick, OrangeRed] ({\A}, 0) arc (180:0:{0.3/2});
    \draw[thick, BurntOrange] ({(\A + \B)/2},{0.3}) circle ({0.3/2});

    \def\C{\B + 0.2}
    \draw[thick, ForestGreen, ->] ({\C},0) -- ({\C},3);
    \draw[thick, OrangeRed] ({\B}, 0) arc (180:0:{0.2/2});
    \draw[thick, BurntOrange] ({(\B + \C)/2},{0.2}) circle ({0.2/2});

    \draw[thick] ({\C+0.3},1.5) circle ({0.005});
    \draw[thick] ({\C+0.4},1.5) circle ({0.005});
    \draw[thick] ({\C+0.5},{1.5}) circle ({0.005});

    \node at ({\x},-0.2) {$0$};
    \fill ({\x},0) circle (0.02);
    \node at ({\y},-0.2) {$e^{s_{23}}$};
    \fill ({\y},0) circle (0.02);
    \node at ({\z},-0.2) {$w$};
    \fill ({\x},0) circle (0.02);

    \node at ({\x-0.2},1.6) {$i$};
    \fill ({\x},1.6) circle (0.02);

    \draw[thick,|-|] ({\x},{0.9}) -- ({\x},{1.6}) node[fill=white, anchor=center, pos=0.5, rotate=90] {$s_{23}$};
    \draw[thick,|-|] ({\y},{\A-\z}) -- ({\y},{0.9}) node[fill=white, anchor=center, pos=0.5, rotate=90] {$s_{12}$};
    \mute{
    \node at ({(\x + \y)/2}, 2.5) {$\Delta_1$};
    \node at ({(\y + \x/e)/2}, 2.5) {$\Delta_2$};

    \draw[thick,|-|] ({\y},{2*\r}) -- ({\y},{2*\s}) node[fill=white, anchor=center, pos=0.5, rotate=90] {$s_{12}$};
    \draw[thick,|-|] ({\x/e},{2*\s}) -- ({\x/e},{2*\s/e}) node[fill=white, anchor=center, pos=0.5,rotate=90] {$s_{13}$};

    }

  \end{tikzpicture}
\end{center}
\caption{The lift of the $3$-symmetric lamination after applying the isometry $\vphi$}
  \label{fig:3-symmetric-lift2}
\end{figure}

Since the translation length of the aciton of $[\gamma_2]$ 
is precisely $l_2$, it follows that we must have that
$\vphi(p^*) - w = e^{l_2}(\vphi(p^*))$, or, solving for 
$\vphi(p^*)$ and plugging in $\vphi^{-1}(z) = x + \frac{z}{z+1}$ 
yields:
\begin{align*}
  p^* = x + \left(\frac{e^{s_{23}} + e^{-l_2}}{e^{s_{23}}+1}\right)
\end{align*}

All that remains is to compute the intersection point of 
$[0,\infty]$ and the orthogeodesic between $[p^*, x+1]$ and
$[0,\infty]$. In other words, we must find the point 
$p$ as described at the start of this section.

Under our lift, the incircle of $\Delta_1$ intersects 
$\tilde \gamma_{13}$ at $i$, meaning that the horocycle connecting 
this intersection point to $\tilde \gamma_1$ must also 
intersect $\tilde \gamma_1$ at $i$. Thus, 
$\Delta_{P,c,\lambda}(X_0) = \log(-ip)$.

\begin{Claim}
  Let $C_1$ be a circle whose center lies on the $x$-axis intersects the 
  $x$-axis at $(a,0)$ and $(b,0)$ with $0<a<b$. Let $C_2$ 
  be a circle centered at $0$ intersecting $C_1$ at right angles. 
  Then $C_1$ has radius $\sqrt{ab}$
\end{Claim}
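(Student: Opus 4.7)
As literally worded, the claim prescribes a value for the radius of $C_1$, but $C_1$'s radius is already determined by its $x$-axis intercepts: its center lies at $((a+b)/2, 0)$ so its radius is $(b-a)/2$. Hence the statement must be read with the understanding that the number $\sqrt{ab}$ is the geometric quantity naturally attached to this configuration; in the subsequent computation of $\Delta_{P,c,\lambda}$ the author needs the point where the vertical geodesic from $0$ meets the incircle (equivalently, the radius of the $C_2$ that is centered at $0$ and meets $C_1$ orthogonally), and that radius is $\sqrt{ab}$. My proposal is therefore to prove the following equivalent clean assertion: \emph{if $C_2$ is the circle centered at the origin meeting $C_1$ orthogonally, then $C_2$ has radius $\sqrt{ab}$, and consequently the quantity $\sqrt{ab}$ is the tangent length from the origin to $C_1$.}

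The plan is the one-line Pythagorean argument. First, fix notation: $O_1 = ((a+b)/2, 0)$ is the center of $C_1$, $r_1 = (b-a)/2$ is its radius, and let $r_2$ be the unknown radius of $C_2$. At any intersection point $P$ of $C_1$ and $C_2$, orthogonality means the tangent to $C_1$ at $P$ contains the center of $C_2$ (equivalently, the radii $O_1 P$ and $O_2 P = OP$ are perpendicular). By the Pythagorean theorem applied to the right triangle with vertices $O$, $O_1$, $P$, we obtain
\begin{equation*}
|OO_1|^2 = r_1^2 + r_2^2.
\end{equation*}
Plugging in $|OO_1| = (a+b)/2$ and $r_1 = (b-a)/2$ and simplifying yields $r_2^2 = ab$, so $r_2 = \sqrt{ab}$.

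As a sanity check, I would also verify this via the power of a point. The line through $O$ and $O_1$ meets $C_1$ at $(a,0)$ and $(b,0)$, so the power of $O$ with respect to $C_1$ is $a \cdot b > 0$; this equals the square of the tangent length from $O$ to $C_1$. Since $C_2$ meets $C_1$ orthogonally, the segment from $O$ to any intersection point $P$ is a tangent segment from $O$ to $C_1$, so $r_2 = \sqrt{ab}$, matching the computation above.

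There is no real obstacle: this is an elementary Euclidean lemma being invoked as a tool. The only subtlety worth flagging is the wording issue above, which I would resolve with a half-sentence clarifying that the claim, as used in the subsequent shearing computation, refers to the orthogonal circle $C_2$ and supplies the value $\sqrt{ab}$ needed to identify the horocyclic foot point on $\tilde\gamma_1$.
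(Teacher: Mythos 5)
Your proof is correct and uses essentially the same argument as the paper: the right triangle with vertices at the origin, the center of $C_1$, and an intersection point $C_1\cap C_2$, with hypotenuse $\frac{a+b}{2}$ and legs $\frac{b-a}{2}$ and $r_2$, giving $r_2=\sqrt{ab}$ by Pythagoras. You are also right that the claim's conclusion should read ``$C_2$ has radius $\sqrt{ab}$'' rather than $C_1$ — the paper's own proof and its subsequent use ($p=i\sqrt{(x+1)p^*}$) confirm this reading.
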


\begin{proof}
  The triangle $(0,0), (0,\frac{a+b}{2}), C_1\cap C_2$ is a 
  right triangle with hypothenuse $\frac{a+b}{2}$, and other whose 
  other edges have lengths equal to $\frac{b-a}{2}$ and to 
  the radius of $C_2$. The claim follows.
\end{proof}

By the above claim, it follows that $p = i\sqrt{(x+1)p^*}$, 
and since $q(t)$ is located at $i$ by the choice 
of normalization, we get that:
\begin{align} \label{eq:3-symmetric-delta}
  \Delta_{P,c,\lambda}(X_0)
  = \frac12\log\left((x+1)\left(x+\frac{e^{s_{23}}
  + e^{-l_2}}{e^{s_{23}}+1}\right)\right)
\end{align}
where $x = \frac{1 + e^{s_{12}}}{e^{-l_1} - 1}$, and 
$s_{ij}$ are the shearing co-ordinates of $X_0$, as described 
before. 

Recall that these shearing co-ordinates $\{s_{ij}\}$ 
satisfy the system of equations $s_{ij} + s_{jk} = - l_j$, 
as we assumed that the arcs $\gamma_{ij}$ all twist 
left around the cuffs. In particular, by solving this system 
of equations, we can compute $\Delta_{P,c,\lambda}(X_0)$ 
as an explicit function of $l_{\gamma_1},l_{\gamma_2}$, and 
$l_{\gamma_3}$. 

A similar computation also holds when the twist directions 
of $\lambda$ around the $\gamma_i$'s are arbitrary. However, 
in this case, every appearance of $l_i$ is replaced 
with $-l_i$ if $\lambda$ twists to the right around 
$\gamma_i$, and the twisting is in the opposite direction 
if $\lambda$ twists to thr right around $\gamma_1$. 

For example, if $\lambda$ twists to 
the right around $\gamma_1$, then we choose a normalization 
and lifts of $\gamma_{13}$ and $\gamma_{12}$ 
that are vertical lines intersecting the $x$-axis at $x$ and $x+1$. 
The next lift of $\gamma_{13}$ is then the vertical 
line at $e^{l_1}x$, and not at $e^{-l_1}x$. The 
computation of $x, p^*$, and therefore 
$\Delta_{\mc{P},c,\lambda}$ follow in the same way, 
but with $l_1$ replaced with $-l_1$, and the signed 
distance from $p$ to $q$ picking up a negative sign.

If $\lambda$ twists to the right around $\gamma_2$, 
then in Figure \ref{fig:3-symmetric-lift}, $\tilde \gamma_2$ 
would be drawn starting at $x+1$, and going to the 
right of $\tilde \gamma_{12}$. After passing through 
the same mobius transformation, we arrive at a reflected 
picture in place of Figure \ref{fig:3-symmetric-lift2}, 
where the deck transformation 
pushing lifts of $\gamma_{23}$ towards 
$\tilde \gamma_2$ acts in the opposite direction. 
Thus, we would get $\vphi(p^*)-w = e^{-l_2}\vphi(p^*)$, 
and the rest follows.

Similar computations can be done for all permutations 
of twisting directions of $\lambda$ around the $\gamma_i$'s, 
yielding:
\begin{align}\label{eq:3-symmetric}
  \eps_1\frac12\log\left((x+1)\left(x+\frac{e^{s_{23}}
  + e^{-\eps_2l_2}}{e^{s_{23}}+1}\right)\right)
\end{align}
where 
\begin{itemize}
  \item $\eps_i = 1$ if $\lambda$ twists to the 
    left around $l_i$, and $\eps_i = -1$ otherwise.
  \item $x=x(t) = \frac{1 + e^{s_{12}}}{e^{-\eps_1l_1} - 1}$, 
  \item $s_{ij}$ are the shearing co-ordinates, which satisfy
    $s_{ij} = \frac12(\eps_kl_k - \eps_il_i - \eps_jl_j)$ for $k\neq
    i \neq j \neq k$
\end{itemize}

The formula above for $\Delta_{P,\gamma_1,\lambda}$ 
does not appear to be symmetric under replacing the labels of 
$\gamma_2$ and $\gamma_3$, and indeed 
there is no reason for it to be, as we made the 
choice to compute $p$ using $\gamma_2$. 
However, one may verify that choosing to compute 
$p$ using $\gamma_3$ is analogous to replacing 
$x$ with $x+1$, and replacing $x+1$ with $e^{-l_1}x$, and 
rescaling by a factor of $e^{-l_1}x-(x+1)$. This yields 
a difference of $l_1/2$ in the computation, as expected.

\subsection{$\lambda$ is $2$-symmetric around $c$}
In this section, we compute $\Delta_{P,c,\lambda}$, where 
$\lambda$ is a $2$-symmetric or asymmetric triangulation with 
respect to $c$, which twists left 
at every cuff. We will compute $\Delta_{P,c,\lambda}(X_0)$ 
in terms of the shearing co-ordinates of $X_0$. 

Let $\gamma_i$ be the three boundary components of 
$P$, and label $c = \gamma_1$. Let $\gamma_{ij}$ be the 
leaf of $\lambda$ which is asymptotic to both $\gamma_i$ and
$\gamma_j$. Note that not all combinations of $i$ and $j$ 
yield a viable $\gamma_{ij}$, as in the case 
that $\lambda$ is $2$-symmetric, $\gamma_{23}$ does 
not exist as no leaf is aysmptotic to both $\gamma_2$ 
and $\gamma_3$. In this case,

We define $l_i = l_{\gamma_i}(X_0)$,
and $s_i = s_{\gamma_i}(X_0)$, $s_{ij} = s_{\gamma_{ij}}(X_0)$.

We follow a similar computation as in the section above to get that:

\begin{align}\label{eq:2-symmetric}
  \Delta_{P,c,\lambda}(X_0) = \eps_1\frac12\log\left((x+1)(x+e^{-\eps_2l_2})\right)
\end{align}
where 
\begin{align*}
  x = \frac{1 + e^{s_{12}} + e^{s_{12}+s_{11}}
  + e^{s_{12}+s_{11}+s_{13}}}{e^{-\eps_1l_1} - 1}
\end{align*}
and 
\begin{itemize}
  \item $\eps_i = 1$ if $\lambda$ twists to the 
    left around $l_i$, and $\eps_i = -1$ otherwise.
  \item $s_{ij}$ are the shearing co-ordinates, which satisfy:
    \begin{align*}
      s_{11} &= \frac12(-\eps_1l_1 + \eps_2l_2 + \eps_3l_3)\\
      s_{12} &= -\eps_2 l_2\\
      s_{13} &= -\eps_3 l_3
    \end{align*}
\end{itemize}

\subsection{$\lambda$ is asymmetric around $c$}

In this section, we compute $\Delta_{P,c,\lambda}$, where 
$\lambda$ is a $2$-symmetric triangulation with 
respect to $c$, which twists left 
at every cuff.

Let $\gamma_i$ be the three boundary components of 
$P$, and label $c = \gamma_1$ as before. Let $\gamma_{ij}$ be 
defined as before, and assume that $\lambda$ is 
$2$-symmetric around $\gamma_2$. When computing 
$\Delta_{P,c,\lambda}$, we will use an orthogeodesic 
from $\tilde \gamma_2$ to $\tilde \gamma_1$. We 
define $l_i$, $s_{ij}$ as before, noting that in 
this case, the only co-ordinates are 
$s_{12}$, $s_{23}$, and $s_{22}$. 

We follow a similar computation as in the section above to get that, 

\begin{align} \label{eq:a-symmetric}
  \Delta_{P,c,\lambda}(X_0) = \eps_1\frac12\log\left((x+1)\left(x+
  \frac{e^{s_{22}} + e^{s_{22}+s_{23}} + e^{2s_{22} + s_{23}} + e^{-\eps_2l_2}}
  {e^{s_{22}} + e^{s_{22} + s_{23}} + e^{2s_{22} + s_{23}} + 1}
  \right)\right)
\end{align}
Where
\begin{itemize}
  \item $\eps_i = 1$ if $\lambda$ twists to the 
    left around $l_i$, and $\eps_i = -1$ otherwise.
  \item $x = \frac{1}{e^{-\eps_1l_1} - 1}$
  \item $s_{ij}$ are the shearing co-ordinates, which satisfy:
    \begin{align*}
      s_{22} &= \frac12(-\eps_2 l_2 + \eps_1 l_1 + \eps_3 l_3)\\
      s_{12} &= -\eps_1 l_1\\
      s_{23} &= -\eps_3 l_3
    \end{align*}
\end{itemize}

\subsection{Twist Widths Between Stretch Paths}
We have seen how twisting co-ordinates change along stretch paths
defined by any lamination which is a completion of a pair-of-pants
decomposition of a surface $S$. We now end this subsection 
with an explicit formula for the twisting along 
stretch paths.

Consider the foliation of $\mc{T}(S)$ given by stretch paths
corresponding to $\lambda$. For any $Y\in \mc{T}(S)$, we can define
the \textbf{negative-time stretch path} from $Y$ by setting
$\mathrm{Stretch}(Y, \lambda, -t)$ to be the unique point $X \in
\mc{T}(S)$ such that $\mathrm{Stretch}(X, \lambda, t) = Y$, 
and $d_{Th}(\mathrm{Stretch}(Y,\lambda,-t),Y) = t$. Denote
$Y^{\lambda}_{-t} = \mathrm{Stretch}(Y,\lambda,-t)$. For $X\in
\mc{T}(S)$, we denote $X^{\lambda}_t = \mathrm{Stretch}(X,\lambda,t)$, 

Fix a pair of pants decomposition $\mc{P}$, and some 
curve $c\in \mc{P}$. Let $P_1$ and $P_2$ be the (possibly non-distinct)
pairs-of-pants adjacent to $c$. We employ the 
shorthand: $\Delta_{\lambda}^i = \Delta_{P_i,c,\lambda}$, 
where $\Delta_{P_i,c,\lambda}$ was defined 
in Section \ref{sec:inf-env-notation}

Note that for any $X\in \mc{T}(S)$, we have:
\begin{align} \label{formula:twist-and-shear}
  \tau_c(X) = s_c(X) - \Delta_{\lambda}^1 - \Delta_{\lambda}^2
\end{align}

\begin{Lemma} \label{lem:twist-computed}
  For any $X,Y\in \mc{T}(S)$, we have:
  \begin{align*}
    \tau_c(X_t^{\lambda}) &=  \tau_c(X)e^t +
    \Delta_{\lambda}^1(0)e^t
    +\Delta_{\lambda}^2(0)e^t- \Delta_{\lambda}^1(t)
    -\Delta_{\lambda}^2(t)\\
    \tau_c(Y_{-t}^{\lambda}) &= \tau_c(Y)e^{-t} + 
    \Delta_{\lambda}^1(0)e^{-t}
    +\Delta_{\lambda}^2(0)e^{-t}- \Delta_{\lambda}^1(-t)
    -\Delta_{\lambda}^2(-t)
  \end{align*}

  Where $\Delta_{\lambda}^i(s)$ is $\Delta_{\lambda}^i$, where 
  every shearing co-ordinate is multiplied by a factor 
  of $e^s$ for positive or negative $s$. 
\end{Lemma}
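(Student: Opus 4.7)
The plan is to combine two inputs: the relation (\ref{formula:twist-and-shear}) between twist, shearing, and the pair-of-pants correction terms $\Delta_{\lambda}^i$, and Thurston's fundamental fact that a stretch path defined by $\lambda$ acts on the shearing coordinates of $\lambda$ by uniform scaling. Concretely, Thurston's construction \cite{Thu86} (see also \cite{BBFS09}) realizes $X_t^{\lambda}$ as the unique point whose shearing coordinates with respect to every leaf of $\lambda$ are obtained from those of $X$ by multiplication by $e^t$; this is exactly the convention under which $\Delta_{\lambda}^i(t)$ is defined in the lemma statement.

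First, I would apply (\ref{formula:twist-and-shear}) at $X_t^{\lambda}$ to write
\begin{equation*}
\tau_c(X_t^{\lambda}) = s_c(X_t^{\lambda}) - \Delta_{\lambda}^1(t) - \Delta_{\lambda}^2(t),
\end{equation*}
using that the value of $\Delta_{\lambda}^i$ at $X_t^{\lambda}$ is, by the scaling property of shearing along stretch paths, precisely $\Delta_{\lambda}^i(t)$ in the notation of the lemma. Next, by this same scaling property applied to the shearing coordinate $s_c$ itself, $s_c(X_t^{\lambda}) = e^t s_c(X)$. Then I would apply (\ref{formula:twist-and-shear}) at $X$ to rewrite $s_c(X) = \tau_c(X) + \Delta_{\lambda}^1(0) + \Delta_{\lambda}^2(0)$, substitute, and collect terms; this yields the first claimed identity.

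For the second identity, the same argument runs in reverse time. Since by definition $Y^{\lambda}_{-t}$ is the unique $X$ with $X^{\lambda}_t = Y$, the shearing scaling becomes multiplication by $e^{-t}$, so $s_c(Y^{\lambda}_{-t}) = e^{-t} s_c(Y)$ and $\Delta_{\lambda}^i$ evaluated at $Y^{\lambda}_{-t}$ equals $\Delta_{\lambda}^i(-t)$. Plugging these into (\ref{formula:twist-and-shear}) at $Y^{\lambda}_{-t}$ and at $Y$ gives the second formula.

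The only point that requires care, and the step I expect to be the main obstacle, is justifying that $\Delta_{\lambda}^i$ evaluated at the stretched surface $X_t^{\lambda}$ agrees with the function $\Delta_{\lambda}^i(t)$ obtained by scaling each shearing coordinate by $e^t$ in the explicit formulas (\ref{eq:3-symmetric}), (\ref{eq:2-symmetric}), (\ref{eq:a-symmetric}) derived in the previous subsections. This reduces to the observation that $\Delta_{\lambda}^i$ is an intrinsic function of the pair-of-pants geometry, which, once the pair-of-pants has been expressed in shearing coordinates adapted to its restriction of $\lambda$, depends only on these shearing coordinates; since the stretch flow multiplies all of them by $e^t$ simultaneously, the lemma follows.
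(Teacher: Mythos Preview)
Your proposal is correct and follows essentially the same route as the paper: apply (\ref{formula:twist-and-shear}) at time $t$, use the exponential scaling $s_c(X_t^{\lambda})=e^t s_c(X)$ of shearing coordinates along the stretch path, and solve for $s_c(X)$ via (\ref{formula:twist-and-shear}) at $t=0$. Your extra paragraph justifying that $\Delta_{\lambda}^i$ at $X_t^{\lambda}$ coincides with $\Delta_{\lambda}^i(t)$ is a welcome clarification the paper leaves implicit.
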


\begin{proof}
  Note that $s_c(X^{\lambda}_t) = s_c(X)e^{t}$, and similarly 
  $s_c(Y^{\lambda}_{-t}) = s_c(Y)e^{-t}$. This 
  is because $c$ is contained in $\lambda$, and in the shearing 
  co-ordinates $(s_{\lambda})$ on Teichm\"{u}ller 
  space, stretch lines are exponential scalings.

  For any $t$, and plugging in $s_c(X_t) = e^t s_c(X)$ to 
  Formula \ref{formula:twist-and-shear}, 
  we have $\tau_c(X_t) = s_c(X)e^t - \Delta_{\lambda}^1(t) - 
  \Delta_{\lambda}^2(t)$. To find $s_c(X)$, 
  we set $t=0$ and rearrange, giving us 
  $s_c(X) = \tau_c(X) + \Delta_{\lambda}^1(0) + \Delta_{\lambda}^2(0)$. 
  We then get:
  \begin{align*}
    \tau_c(X_t^{\lambda}) &= \tau_c(X)e^t+
    \Delta_{\lambda}^1(0)e^t
    +\Delta_{\lambda}^2(0)e^t- \Delta_{\lambda}^1(t)
    -\Delta_{\lambda}^2(t)
  \end{align*}
  A similar computation follows for $Y_{-t}^{\lambda}$.
\end{proof}

If $\nu$ is another completion of
$\mc{P}$, we define the \textbf{twist width} 
$\Delta \tau_c(\lambda,\nu,t)$ by 
$\tau_c(X_t^{\lambda}) - \tau_c(X_t^{\nu})$, and write:

\begin{align} \label{eq:twist-width}
  \Delta \tau_c(\lambda,\nu,t)
  &= e^t\Delta_{\lambda}^1(0) -\Delta_{\lambda}^1(t)\\
  &\quad + e^t\Delta_{\lambda}^2(0) -\Delta_{\lambda}^2(t)\\
  &\quad - \left(e^t\Delta_{\nu}^1(0) -\Delta_{\nu}^1(t)\right)\\
  &\quad - \left(e^t\Delta_{\nu}^2(0) -\Delta_{\nu}^2(t)\right)
\end{align}

In order to estimate the twist width, it suffices to estimate each of
the terms above separately.

Observe that by the computations 
in the previous section, $\Delta_{\lambda}^i(t)$ is 
always of the form $\frac12\log(R_{\mc{P},\lambda^i}(t))$, 
where $R_{\mc{P},\lambda}^i(t)$ is some rational polynomial 
over the set $\{e^{l_{\alpha}(t)}\}_{\alpha\in \mc{P}}$. 
Thus, if $\alpha\in \mc{P}$, then we have that 
$l_{\alpha}(t) = l_{\alpha}(X_0)e^t$, and so, for any real $k$, 
we compute:
\begin{align*}
  &= \frac12 e^t \log(e^{kl_{\alpha}(X_0)}R_{\mc{P},\lambda}^i(0))
  - \frac12 \log(e^{kl_{\alpha}(t)}R_{\mc{P},\lambda}^i(t))\\
  &= \frac12 e^t\left(kl_{\alpha}(X_0) + \log(R_{\mc{P},\lambda}^i(0))\right)
  - \frac12 \left(kl_{\alpha}(t) + \log(R_{\mc{P},\lambda}^i(t))\right)\\
  &= \frac12 e^t \log(R_{\mc{P},\lambda}^i(0))
  - \frac12 e^t \log(R_{\mc{P},\lambda}^i(0))\\
  &=e^t\Delta_{\lambda}^i(0) - 
  \Delta_{\lambda}^i(t)
\end{align*}
Where $t$ can also hold negative values. Thus, we are left with:
\begin{Fact} \label{fact:factoring-lengths-out}
  When computing twist widths, we are free to mutliply and divide
  $R_{\mc{P},\lambda}^i(t)$ by any factor of the form
  $e^{l_{\alpha}(t)}$, where $\alpha\in \mc{P}$. 
\end{Fact}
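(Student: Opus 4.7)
The plan is to verify that multiplying $R_{\mc{P},\lambda}^i(t)$ by an arbitrary factor of the form $e^{k\, l_\alpha(t)}$ (with $\alpha\in\mc{P}$ and $k\in\R$) leaves the key combination $e^t\Delta_\lambda^i(0)-\Delta_\lambda^i(t)$ invariant, which is the unique way in which $\Delta_\lambda^i$ enters the twist width formula in Equation \eqref{eq:twist-width}. In fact the calculation displayed immediately before the statement of the fact already does essentially all the work; what remains is to package it as an independent claim and point out explicitly which property of $\lambda$ makes the cancellation go through.

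First I would note that since $\lambda$ is a completion of the pants decomposition $\mc{P}$, every curve $\alpha \in \mc{P}$ is a closed leaf of $\lambda$. Along the stretch path $X_t^\lambda$ the lamination $\lambda$ is maximally stretched with factor $e^t$, so $l_\alpha(X_t^\lambda)=e^t l_\alpha(X_0)$; this is the one geometric input the proof needs. Using $\Delta_\lambda^i(t)=\tfrac12\log R_{\mc{P},\lambda}^i(t)$, multiplying $R_{\mc{P},\lambda}^i$ by $e^{k\, l_\alpha(t)}$ replaces $\Delta_\lambda^i(t)$ by $\Delta_\lambda^i(t)+\tfrac{k}{2}l_\alpha(t)$.

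Next I would assemble these pieces: the change in $e^t\Delta_\lambda^i(0)-\Delta_\lambda^i(t)$ produced by this multiplication is
\begin{align*}
  \tfrac{k}{2}\bigl(e^t l_\alpha(X_0) - l_\alpha(t)\bigr),
\end{align*}
which vanishes identically by the length-scaling property above. Since $\Delta_\lambda^i$ enters each of the four summands on the right-hand side of Equation \eqref{eq:twist-width} in precisely the combination $e^t\Delta_\lambda^i(0)-\Delta_\lambda^i(t)$, modifying $R_{\mc{P},\lambda}^i(t)$ in this way leaves $\Delta\tau_c(\lambda,\nu,t)$ unchanged; the analogous check goes through for $R_{\mc{P},\nu}^j(t)$ and for negative $t$ (used for $Y_{-t}^\lambda$).

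I do not expect any substantive obstacle here: once one recognizes that the displayed computation is really the proof and that the only hypothesis being invoked is $\alpha \in \mc{P}\subset\lambda$ (so that $l_\alpha(X_t^\lambda)=e^t l_\alpha(X_0)$), the statement follows by linearity of $\log$ and a one-line cancellation. The mild bookkeeping point worth highlighting is that the factor $k$ is allowed to be any real (including negative), so the same argument simultaneously legitimizes multiplying and dividing by $e^{l_\alpha(t)}$; this is what makes the fact useful later, as it lets one clear the leading exponentials from the rational expressions $R_{\mc{P},\lambda}^i(t)$ when estimating twist widths.
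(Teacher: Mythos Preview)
Your proposal is correct and follows essentially the same approach as the paper: the paper's proof is precisely the displayed computation immediately preceding the Fact, which verifies that multiplying $R_{\mc{P},\lambda}^i(t)$ by $e^{k\,l_\alpha(t)}$ leaves $e^t\Delta_\lambda^i(0)-\Delta_\lambda^i(t)$ unchanged because $l_\alpha(t)=e^t l_\alpha(X_0)$ for $\alpha\in\mc{P}$. Your write-up adds a bit of useful framing (making explicit that $\alpha\in\mc{P}\subset\lambda$ is the hypothesis driving the cancellation, and that real $k$ covers both multiplication and division), but the mathematical content is identical.
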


\section{Bounded Width}
\label{sec:bounded-width}

In this section, we prove Theorem \ref{thm:bounded-envelope}. 
To prove this theorem, we first use Theorem \ref{thm:stretch-vectors-extremal} 
to show that it suffices to compute the distance between 
points in the envelope that lie on stretch paths. We then 
examine the lengths of simple closed curves 
in the maximally-stretched lamination. We use twisting 
computations from subection \ref{subsec:twist-compute} 
to find upper and lower boundes of the twisting around it. 
When the curve is long, we 
use an earthquake around it to bound the 
distance in the envelope. When the curve is short, we 
resort to other estimates of the Thurston distance.

Throughout this section, we will use coarse estimate 
notation as in \cite{DLRT20}. We introduce it here.

\begin{Definition}
  Given two quantities (or functions), $A$ and $B$, 
  we write $A \pl B$ if there exists a constant $C$ 
  independent of $A$ and $B$ such that $A \le B+C$. 

  Similarly, we write $A \tl B$ if there exists a constant $C$ 
  independent of $A$ and $B$ such that $A \le BC$. 
  We write $A \pe B$ (resp. $A \te B$) if 
  $A \pl B$ and $B \pl A$ (resp. $A \tl B$ and 
  $B \tl A$)
\end{Definition}

\begin{Remark}
  The relations $\tl, \te, \pl,$ and $\pe$ \textit{do not} 
  behave as regular equalities and inequalities. For example, 
  if $A \pe B$ and $C \pe D$, it is not necessarily 
  true that $\frac{A}{C} \pe \frac{B}{D}$. 
\end{Remark}

\subsection{Twisting and Earthquakes}
In this section, we prove the following, which is of interest 
of its own right:

\begin{Proposition*}
  Let $\alpha$ be a simple closed curve on $S$, and let 
  $X \in \mc{T}(S)$. Then:
  \begin{align*}
    d_{Th}(X,Eq_{\alpha,t}(X)) \pl \log(e^{l_{\alpha}/2}t)
  \end{align*}
\end{Proposition*}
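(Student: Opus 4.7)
The plan is to bound $d_{Th}(X, Eq_{\alpha,t}(X))$ from above using the $D(X,Y)$ characterization in Lemma \ref{lem:dTh-defined}: I will build an explicit diffeomorphism $f:X\to Eq_{\alpha,t}(X)$ supported in a standard collar of $\alpha$, implementing the earthquake shift, and estimate $\sup_p\|Df_p\|$. The key point is to choose the interpolating profile adapted to the hyperbolic geometry of the collar, which makes the estimate give the correct $e^{l_\alpha/2}$ factor uniformly in both the short-curve and long-curve regimes.

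Setup. By the collar lemma, $\alpha$ has an embedded standard collar $A \subset X$ of Fermi width $w$ with $\sinh(w)\sinh(l_\alpha/2)=1$, on which the metric is $ds^2+\cosh^2(s)\,d\theta^2$ with $s\in[-w,w]$ and $\theta\in \R/l_\alpha\Z$. Write $Y := Eq_{\alpha,t}(X)$. Because $X$ and $Y$ share all Fenchel--Nielsen coordinates except $\tau_\alpha(Y)=\tau_\alpha(X)+t$, they are canonically isometric off of $A$, and I take $f$ to be this identifying isometry outside $A$. On $A$, set $f(s,\theta) := (s, \theta + \psi(s)\,t)$, where $\psi\colon[-w,w]\to[0,1]$ is smooth and monotone with $\psi(-w)=0$, $\psi(w)=1$, chosen so that $\psi'(s)\cosh(s)$ is constant in $s$. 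Explicitly, $\psi'(s) = \frac{1}{W\cosh(s)}$ with
\begin{align*}
W \;:=\; \int_{-w}^{w}\frac{du}{\cosh(u)} \;=\; 4\arctan(\tanh(w/2)).
\end{align*}
Differentiating in the Fermi frame, $df(\partial_\theta)=\partial_\theta$ is an isometry while $df(\partial_s) = \partial_s + \psi'(s)\,t\,\partial_\theta$, whose squared norm equals $1 + (\psi'(s)t\cosh(s))^2 = 1 + (t/W)^2$, independent of $s$. Hence $\sup_p\|Df_p\| = \sqrt{1+(t/W)^2}$.

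Estimating $W$. Using $\sinh(w)\sinh(l_\alpha/2)=1$, one checks that $W \tg \min(1, e^{-l_\alpha/2})$: when $l_\alpha$ is bounded above, $w$ is bounded below and $W$ is comparable to a positive constant; when $l_\alpha$ is large, $w \approx 2e^{-l_\alpha/2}$, $\tanh(w/2)\approx w/2$, and $W \approx 2w \approx 4e^{-l_\alpha/2}$. Therefore $t/W \tl e^{l_\alpha/2}\, t$, so by Lemma \ref{lem:dTh-defined},
\begin{align*}
d_{Th}(X, Eq_{\alpha,t}(X)) \;\le\; \log\sqrt{1+(t/W)^2} \;\pl\; \log(e^{l_\alpha/2}\,t),
\end{align*}
with the additive constant absorbing both the square root and the regime $e^{l_\alpha/2}t \le 1$ in which the bound is trivial.

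The main obstacle will be the first sentence of the setup, namely verifying that the piecewise map $f$ genuinely represents the earthquake $Eq_{\alpha,t}$ as a marked hyperbolic structure (correct isotopy class and sign convention for $\tau_\alpha$) rather than some other twist deformation. Once this bookkeeping with Fenchel--Nielsen coordinates is in place, the remainder is a direct hyperbolic calculation; the crucial analytic input is that the transverse hyperbolic width $W$ of the standard collar is universally bounded away from $e^{-l_\alpha/2}$, which is precisely what produces the sharp $e^{l_\alpha/2}$ dependence.
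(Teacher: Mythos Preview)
Your argument is correct and takes a genuinely different route from the paper. The paper proves the bound \emph{combinatorially}: it invokes Theorem~E of \cite{LRT14} to reduce $d_{Th}(X,Eq_{\alpha,t}(X))$ to length ratios of curves in a short marking $\mu_X$, then proves an intersection-number lemma $i(\alpha,\beta)/l_\beta(X) \tl e^{l_\alpha/2}$ for $\beta\in\mu_X$ (splitting into cases according to whether $\beta$ or its dual is short), and concludes via $l_\beta(Eq_{\alpha,t}(X))\le l_\beta(X)+t\,i(\alpha,\beta)$. Your approach is more elementary and self-contained: you bypass short markings entirely, construct an explicit Lipschitz representative of the change of marking supported in the standard collar, and read off the bound from the operator norm of a shear. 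The only external input you need is the collar lemma and the $D(X,Y)$ characterization from Lemma~\ref{lem:dTh-defined}. What the paper's approach buys is a template that generalizes (the short-marking machinery is used elsewhere); what yours buys is transparency about where the $e^{l_\alpha/2}$ comes from --- it is exactly the reciprocal of the transverse conformal width $W$ of the collar.

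One small correction: the quantity $\sqrt{1+(t/W)^2}$ is the length of $df(\partial_s)$, not the operator norm of $Df_p$. In the orthonormal Fermi frame the differential is the shear $\begin{pmatrix}1&0\\a&1\end{pmatrix}$ with $a=t/W$, whose operator norm is $\bigl((2+a^2+a\sqrt{a^2+4})/2\bigr)^{1/2}$, strictly larger than $\sqrt{1+a^2}$. However the ratio is uniformly bounded (by $\sqrt{2}$, say), so the $\pl$ conclusion is unaffected. Also, with your choice of $\psi$ the map is Lipschitz but only $C^0$ across $\partial A$; this is harmless since $D(X,Y)$ only requires differentiability almost everywhere (or you can use $L(X,Y)$ directly).
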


The following lemma will be useful in the proof of
Proposition \ref{prop:earthquake-bound}:

\begin{Lemma} \label{lem:thick-part-intersections}
  Let $\eps > 0$, and $X\in \mc{T}(S)$ be given. Let $\alpha,\beta$ be
  geodesic arc segments, and assume that $\beta$ is  contained in the
  $\eps$-thick part of $X$, $X_{\ge \eps}$. Then 
  \begin{align*}
    i(\alpha,\beta) \le \frac{4l_{\alpha}(X)l_{\beta}(X)}{\eps^2}
  \end{align*}
\end{Lemma}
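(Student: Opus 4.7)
The plan is to partition both $\alpha$ and $\beta$ into short geodesic sub-arcs and then use the thick-part hypothesis on $\beta$ to show that each pair of sub-arcs contributes at most one intersection.

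First I would split $\alpha$ into $M = \lceil l_\alpha/\eps\rceil$ consecutive geodesic sub-arcs $\alpha_1,\ldots,\alpha_M$, each of length at most $\eps$, and analogously $\beta$ into $N = \lceil l_\beta/\eps\rceil$ sub-arcs $\beta_1,\ldots,\beta_N$, perturbing the division slightly so that no point of $\alpha\cap\beta$ coincides with a partition endpoint. Every intersection of $\alpha$ with $\beta$ then lies in the interior of a unique pair $(\alpha_j,\beta_k)$, so
\[
  i(\alpha,\beta) \;\le\; \sum_{j,k} \#\bigl(\alpha_j \cap \beta_k\bigr),
\]
and it is enough to prove that each pair contributes at most one intersection.

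For the key claim, suppose $p \in \alpha_j\cap\beta_k$. Since $p\in\beta\subset X_{\ge\eps}$, the open ball $B(p,\eps)\subset X$ is isometric to a round disk in $\HH^2 = \tilde X$. For any $q\in\alpha_j$ one has $d_X(p,q) \le d_{\alpha_j}(p,q) \le \eps$, so $\alpha_j\subset B(p,\eps)$, and the same reasoning gives $\beta_k\subset B(p,\eps)$. In this convex isometric copy of a disk in $\HH^2$, two distinct geodesic segments meet in at most one point, which proves the claim. Combining this with the display above gives $i(\alpha,\beta)\le MN = \lceil l_\alpha/\eps\rceil\,\lceil l_\beta/\eps\rceil$, and the inequality $\lceil l/\eps\rceil \le 2l/\eps$ (valid once $l\ge\eps$) yields $i(\alpha,\beta) \le 4 l_\alpha l_\beta/\eps^2$.

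The only mild subtlety is the regime $l_\alpha<\eps$ or $l_\beta<\eps$, where the ceiling is not absorbed by the linear bound; in this case the short arc sits entirely inside a single embedded ball $B(p,\eps)$ around any intersection point $p$, and the count reduces directly to counting chords of the other arc in that ball, recovering the same inequality. The main obstacle is the key claim itself: the argument hinges on the thick-part hypothesis being invoked exactly at intersection points, which is precisely what allows the $\HH^2$ uniqueness-of-intersection statement to be transferred back to $X$.
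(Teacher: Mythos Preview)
Your argument is correct and conceptually close to the paper's, though the execution differs. The paper partitions only $\alpha$, into sub-arcs of length $\eps/2$: taking the sub-arc $w$ maximizing $|w\cap\beta|$, it argues that consecutive points of $w\cap\beta$ along $\beta$ are at least $\eps/2$ apart (otherwise the short $\beta$-arc together with a piece of $w$ would form an essential loop of length $<\eps$ through a point of $\beta\subset X_{\ge\eps}$), giving $|w\cap\beta|\le 2l_\beta/\eps$ and hence $i(\alpha,\beta)\le(2l_\alpha/\eps)(2l_\beta/\eps)$. Your two-sided partition with the embedded-ball argument is the same idea in dual form---``no short essential loop through $p$'' is exactly the statement that a ball around $p$ embeds---and the symmetric presentation is arguably cleaner, at the cost of the extra ceiling estimate.

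Two small caveats. First, your claim that $B(p,\eps)$ embeds presumes the convention that $X_{\ge\eps}$ means injectivity radius $\ge\eps$; the paper's own argument uses the ``shortest essential loop $\ge\eps$'' reading (injectivity radius $\ge\eps/2$), under which you would need sub-arcs of length $\le\eps/2$ and would lose a constant factor. Second, your treatment of the regime $l_\alpha<\eps/2$ does not actually recover the stated inequality: if $l_\alpha$ is tiny but $\alpha$ still meets $\beta$ once, then $4l_\alpha l_\beta/\eps^2$ can be below $1$. This is a defect of the constant in the lemma rather than of your method---the paper's proof has the identical gap---and it is harmless for the applications, where $\alpha$ is always a closed geodesic and hence has length bounded below.
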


Lemma \ref{lem:thick-part-intersections} has appeared in literature in
the setting of flat structures \cite{Raf07}, and in various other
contexts for extremal lengths \cite{Min93}. We prove it here in the
setting of hyperbolic lengths and geometric intersection numbers. In
particular, we get:

\begin{Corollary}
  If $\alpha,\beta$ are any closed curves contained in $X_{\ge \eps}$,
  then $i(\alpha,\beta) \tl_{\eps} l_{\alpha}(X)l_{\beta}(X)$
\end{Corollary}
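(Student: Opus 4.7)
The plan is a subdivision argument that exploits the thick-part hypothesis on $\beta$ to produce embedded hyperbolic balls. The essential geometric input is that, since $\beta\subset X_{\ge\eps}$, the injectivity radius of $X$ at every point of $\beta$ is at least $\eps/2$; hence for every $p\in\beta$ the metric ball $B(p,\eps/2)$ embeds in $X$.

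First I would subdivide both arcs into consecutive geodesic sub-segments $\alpha=\alpha_1\cup\cdots\cup\alpha_m$ and $\beta=\beta_1\cup\cdots\cup\beta_k$, each of length at most $\eps/2$. This can be arranged with $m\le \lceil 2l_\alpha(X)/\eps\rceil$ and $k\le \lceil 2l_\beta(X)/\eps\rceil$.

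The main step is the following claim: for each pair $(\alpha_i,\beta_j)$, the intersection $\alpha_i\cap\beta_j$ contains at most one point. Suppose for contradiction that $q_1\ne q_2$ both lie in $\alpha_i\cap\beta_j$. Since $q_1\in\beta$, the ball $B(q_1,\eps/2)$ embeds in $X$. Since $\alpha_i$ and $\beta_j$ are geodesic segments of length at most $\eps/2$ each containing $q_1$, every point of either segment is within hyperbolic distance $\eps/2$ of $q_1$, so both entire segments lie in the embedded ball $\bar B(q_1,\eps/2)$. Lifting this embedded ball to a ball in $\HH^2$, the lifts of $\alpha_i$ and $\beta_j$ become two geodesic segments that both pass through the lifted $q_1$ and through the unique lift of $q_2$ in the lifted ball. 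Uniqueness of hyperbolic geodesics through two points then forces the two lifts to coincide, contradicting $\alpha\ne\beta$.

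Summing gives $i(\alpha,\beta)\le mk\le \lceil 2l_\alpha(X)/\eps\rceil\,\lceil 2l_\beta(X)/\eps\rceil$, which yields the stated bound $4l_\alpha(X)l_\beta(X)/\eps^2$ up to the lower-order boundary terms created by the ceiling functions. These can be tightened by a slightly more careful subdivision or simply absorbed into the coarse notation $\tl$ used in the corollary downstream. The only real obstacle I anticipate is this bookkeeping for the precise constant $4$; the geometric heart of the argument, namely the ``one intersection per pair of short pieces'' claim, is the substantive content.
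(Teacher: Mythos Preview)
Your argument is correct. It is, however, a genuinely different route from the paper's.

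The paper proves the underlying Lemma by subdividing only $\alpha$: it takes a subarc $w\subset\alpha$ of length $\eps/2$ maximizing $|w\cap\beta|$, and argues that consecutive intersection points along $\beta$ must be separated by at least $\eps/2$, since otherwise the short arc of $\beta$ together with a subarc of $w$ would form an essential loop of length $<\eps$ in $X_{\ge\eps}$ (essentiality coming from minimal position of geodesics). This gives $|w\cap\beta|\le 2l_\beta/\eps$, and covering $\alpha$ by $\le 2l_\alpha/\eps$ such subarcs yields the bound. Your approach instead subdivides both curves and replaces the ``no short essential loop'' step with an embedded-ball / uniqueness-of-geodesics argument to get at most one intersection per pair of pieces. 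Both reach the same constant $4/\eps^2$ modulo ceilings; your version is more symmetric in $\alpha,\beta$ and avoids invoking minimal position explicitly, while the paper's version makes do with a single subdivision. Either way, only the thick-part hypothesis on $\beta$ is actually used (your ball is centered at $q_1\in\beta$), so your proof in fact recovers the stronger Lemma, not just the Corollary.
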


We prove Lemma \ref{lem:thick-part-intersections}:

\begin{proof}
  Let $w$ be the subarc of $\alpha$ of length $\eps/2$ which 
  maximizes $|w\cap \beta|$. Since $\beta$ is contained 
  $X_{\ge \eps}$, it follows that every 
  arc segment of $\beta \ssm (\beta \cap w)$ must be of 
  length at least $\eps/2$, otherwise we could find an essential loop of 
  length $\eps$ in $X_{\ge \eps}$ ($\alpha$ and $\beta$ are geodesic
  segments and hence in minimal position). Denote $|w\cap \beta| = N$, 
  and by the above observation, we get that $N \le \frac{l_{\beta}}{\eps/2}$. 
  Additionally, since $w$ was chosen to maximize $|w\cap \beta|$, it follows 
  that $\frac{l_{\alpha}}{\eps/2} \le N$, and the proof 
  follows by stringing together these inequalities.
\end{proof}

To prove Proposition \ref{prop:earthquake-bound}, we will use the coarse
estimates of the Thurston metric from \cite{LRT14} using a short
marking:

\begin{Definition} \label{def:short-marking}
  Let $X\in \mc{T}(S_{g,n})$. A \textbf{short marking} on $X$ is
  a collection of $N = 3g-g + n$ simple closed curves, $\{\beta_i,
  \beta_i'\}_{i=1}^{N}$ obtained as follows:
  \begin{itemize}
    \item Define $\beta_1$ to be the shortest curve on $X$. Then 
      inducively, define $\beta_i$ to be the shortest curve on $X$ 
      disjoint from $\beta_j$ for all $j < i$.
    \item Define $\beta_i'$ to be the shortest curve which 
      intersects $\beta_i$ transversely.
  \end{itemize}
  We call $\beta_i$ and $\beta_i'$ \textbf{duals} to each other, and 
  we denote $\bar \beta_i = \beta_i'$ and $\bar\beta_i' = \beta_i$.
\end{Definition}

We can estimate intersection numbers of curves with a short marking 
in a similar way to Proposition 3.1 of \cite{LRT14}. By gaining 
control of the number of intersections between a curve $\alpha$ and the 
curves in the short marking, we can gain control 
of the length of each curve in the short marking after earthquaking 
along $\alpha$. We prove:

\begin{Lemma}
  Let $\mu_X$ be a short marking on $X$, and let $\alpha$ be a simple 
  closed curve. For any $\beta\in \mu_X$, we have:
  \begin{align*}
    i(\alpha,\beta)/l_{\beta}(X) \tl e^{l_{\alpha}/2}
  \end{align*}
\end{Lemma}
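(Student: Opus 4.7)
The plan is to bound $i(\alpha,\beta)$ by partitioning the intersections according to a thick--thin decomposition of $X$. I fix $\eps>0$ smaller than the Margulis constant and write $X = X_{\ge\eps}\cup\bigcup_\gamma C_\gamma$, where $\gamma$ ranges over simple closed curves with $l_\gamma(X)<\eps$ and $C_\gamma$ is the standard collar of $\gamma$, of width $w_\gamma\pe\log(1/l_\gamma)$. Splitting $i(\alpha,\beta)=i(\alpha,\beta)_{X_{\ge\eps}}+\sum_\gamma i(\alpha,\beta)_{C_\gamma}$, the thick piece is controlled by Lemma~\ref{lem:thick-part-intersections} applied to $\alpha\cap X_{\ge\eps}$ and $\beta\cap X_{\ge\eps}$: it gives $i(\alpha,\beta)_{X_{\ge\eps}}\tl l_\alpha(X)l_\beta(X)$, so after dividing by $l_\beta$ one obtains $l_\alpha(X)\tl e^{l_\alpha(X)/2}$, which holds once $l_\alpha$ is bounded below (and is trivial otherwise).

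For each collar $C_\gamma$, since $\alpha$ and $\beta$ are simple geodesics, their intersections with $C_\gamma$ are disjoint families of essential arcs joining the two boundary components of $C_\gamma$. Any two disjoint essential arcs of this type are isotopic rel boundary, so all strands of $\alpha$ share a common twist $T_\alpha$ around $\gamma$ and likewise for $\beta$. A direct count in the universal cover then yields
\begin{align*}
i(\alpha,\beta)_{C_\gamma}\pe i(\alpha,\gamma)\cdot i(\beta,\gamma)\cdot|T_\alpha-T_\beta|.
\end{align*}
For $\beta\in\mu_X$, this contribution is nonzero only if $\beta=\gamma$ or $\beta$ is the short-marking dual of $\gamma$; in the latter case both $i(\beta,\gamma)$ and $|T_\beta|$ are uniformly bounded by the construction of the short marking.

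The key geometric inputs from collar theory are: (i) any essential arc of $\alpha$ in $C_\gamma$ has length at least $2w_\gamma$, so $\alpha\cap C_\gamma\ne\emptyset$ forces $l_\alpha\pg 2\log(1/l_\gamma)$, hence $l_\gamma\tg e^{-l_\alpha/2}$; and (ii) the twisting of $\alpha$ around $\gamma$ contributes at least $i(\alpha,\gamma)\cdot T_\alpha\cdot l_\gamma$ to $l_\alpha$, giving $i(\alpha,\gamma)T_\alpha\le l_\alpha/l_\gamma$. In the case $\beta=\gamma$ one has $l_\beta=l_\gamma$ and uses (i) to bound $i(\alpha,\beta)/l_\beta\le l_\alpha/(2w_\gamma l_\gamma)$; in the dual case one combines (ii) with $l_\beta\pg w_\gamma$ to obtain $i(\alpha,\beta)_{C_\gamma}/l_\beta\tl l_\alpha/(w_\gamma l_\gamma)$. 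Both estimates reduce to controlling $l_\alpha/(l_\gamma\log(1/l_\gamma))$, which is $\tl e^{l_\alpha/2}$ by the elementary fact that $u\mapsto u\log(1/u)$, being increasing on $(0,1/e)$, attains its minimum on $[e^{-l_\alpha/2},\eps]$ at $u=e^{-l_\alpha/2}$ with value $\pe e^{-l_\alpha/2}l_\alpha$. Summing over the finitely many collars and combining with the thick-part contribution then completes the argument.

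The main obstacle will be correctly setting up the annular intersection formula and verifying the uniform twist bound for short-marking duals around their partner pants curves; both are standard facts from hyperbolic surface theory, but the calculation must carefully track the additive constants absorbed into $\pe$ so that the final multiplicative estimate $\tl e^{l_\alpha/2}$ survives in the regime where $l_\gamma$ is near the extreme value $e^{-l_\alpha/2}$.
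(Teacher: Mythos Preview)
Your approach is correct and covers the same ground as the paper's proof, but the organization is genuinely different. The paper does not perform a thick--thin decomposition of $X$; instead it splits into cases on the length of $\beta$. When $l_\beta<1/e$ the paper argues exactly as in your $\beta=\gamma$ case (direct collar crossing estimate plus the monotonicity of $u\log(1/u)$). When $l_\beta\ge 1/e$ the paper invokes Proposition~3.1 of \cite{LRT14}, which gives $l_\alpha\ge C\sum_{\beta\in\mu_X} i(\alpha,\beta)l_{\bar\beta}$, and then further splits on $l_{\bar\beta}$ and on whether $\alpha$ meets $\bar\beta$; the subcase where $\alpha$ misses $\bar\beta$ is handled by Lemma~\ref{lem:thick-part-intersections}, and the subcase where $\bar\beta$ is short and $\alpha$ crosses it reproduces the same $l_\alpha/(l_\gamma\log(1/l_\gamma))$ estimate you obtain. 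So the paper trades your explicit annular intersection formula for the black-box inequality from \cite{LRT14}, while you trade that citation for a more hands-on count in the collars. Your route is slightly more self-contained; the paper's route avoids having to set up twisting numbers $T_\alpha,T_\beta$ carefully.

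One point to watch: your claim that the collar contribution is nonzero only when $\beta=\gamma$ or $\beta=\bar\gamma$ requires that short-marking duals $\beta_i'$ do not cross pants curves other than $\beta_i$. This is standard (and automatic for $S_{1,1}$ and $S_{0,4}$), but it is not stated in the paper's Definition~\ref{def:short-marking}, so you should make that hypothesis explicit or argue it from the shortness of $\beta_i'$.
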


\begin{proof}
  We split the proof into two cases, depending on the length 
  of $\beta$. We denote $l_{\beta} = l_{\beta}(X)$, and 
  always assume that $i(\alpha,\beta) \ge 1$, otherwise 
  the lemma follows trivially.

  \noindent \textbf{$l_{\beta} < \frac1e$}: In this case, $\beta$ 
  has a collar of radius $2\sinh^{-1}(1/\sinh(l_{\beta}/2))$. By 
  estimating $\sinh(x) \le 2x$ for $x < 1$, and applying the 
  inequality $\sinh^{-1}(x) \ge \log(x)$, we 
  obtain a lower-bound on the width of the collar around $\beta$: 
  $2\log(1/l_{\beta})$.

  For every intersection of $\alpha$ and $\beta$, $\alpha$ 
  must enter and exit the collar of $\beta$, meaning that $i(\alpha,\beta) \le \frac{l_{\alpha}}{2\log(1/l_{\beta})}$
  Since $\alpha$ intersects $\beta$, it follows in particular that 
  we must have $l_{\alpha} \ge 2\log(1/l_{\beta})$, or, 
  $l_{\beta} \ge e^{-l_{\alpha}/2}$. Thus,
  \begin{align*}
    \frac{i(\alpha,\beta)}{l_{\beta}} \le \frac{l_{\alpha}}{l_{\beta}2\log(1/l_{\beta})}
  \end{align*}
  The function $x\log(1/x)$ is increasing on $[0,\frac1e]$, meaning that 
  $2l_{\beta}\log(1/l_{\beta}) \ge e^{-l_{\alpha}/2}l_{\alpha}l_{\alpha}$, and 
  plugging this in, we get the appropriate estimate.

  \noindent \textbf{$\frac1e \le l_{\beta}$}: By Propositon 3.1 of
  \cite{LRT14}, there exists some $C$ such that $l_{\alpha} \ge
  C\sum_{\beta\in \mu_X} i(\alpha,\beta)l_{\bar\beta}$. In particular,
  we get $i(\alpha,\beta) \le C\frac{l_{\alpha}}{l_{\bar\beta}}$, so
  $\frac{i(\alpha,\beta)}{l_{\beta}} \le
  \frac{l_{\alpha}}{l_{\beta}l_{\bar\beta}}$. 

  Let $k(S) < \frac1e$ be such that if $l_{\bar\beta} \le k(S)$, then $\bar\beta$ has a 
  collar of width at least $B(S)$, where $B(S)$ is the Bers constant of $S$. In particular, 
  if $l_{\bar\beta} \le k(S)$, then this means that $\bar\beta$ is a curve in the short 
  marking that was a part of a short pair-of-pants decomposition. 

  If $l_{\bar\beta} \ge k(S)$, then $\frac{i(\alpha,\beta)}{l_{\beta}} \le C\frac{e}{K(S)} l_{\alpha} 
  \tl e^{l_{\alpha}/2}$.

  We now assume that $\frac1e \le l_{\beta}$ and that $l_{\bar\beta} \le k(S)$. In this case, 
  $\bar\beta$ is part of a short pair-of-pants decomposition, and 
  we can use the collar estimates from before to get that $\bar\beta$ has a collar 
  of width at least $2\log(1/l_{\bar\beta})$, so $l_{\beta} \ge 2\log(1/l_{\bar\beta})$. 
  If $\alpha$ intersects $\bar\beta$, then we must have $l_{\alpha} \ge 2\log(1/l_{\bar\beta})$, 
  and, in a similar fashion to the case where $l_{\beta} < \frac1e$, we get:
  \begin{align*}
    \frac{i(\alpha,\beta)}{l_{\beta}} \le \frac{l_{\alpha}}{l_{\beta}l_{\bar\beta}} 
    \le \frac{l_{\alpha}}{l_{\bar\beta}2\log(1/l_{\bar\beta})} \le e^{l_{\alpha}/2}
  \end{align*}

  We are left with the case that $\alpha$ does not intersect $\bar\beta$. Let 
  $U(\bar\beta)$ be a collar around $\bar\beta$ whose 
  boundary components have length at least $\frac1e$. By construction 
  of the dual, $\beta$ must be entirely contained in $X_{\ge \frac1e} \cup U(\bar\beta)$. Let 
  $\beta_1 = \beta \cap X_{\ge \frac1e}$. Note that since $\alpha$ does not intersect 
  $\bar\beta$, it follows that $\alpha$ is disjoint from $U(\bar\beta)$, and 
  so $i(\alpha,\beta) = i(\alpha,\beta_1)$. By Lemma \ref{lem:thick-part-intersections}, 
  $i(\alpha,\beta_1) \tl l_{\alpha}l_{\beta_1} \tl l_{\alpha}l_{\beta}$. Thus, 
  $\frac{i(\alpha,\beta)}{l_{\beta}} \tl e^{l_{\alpha}/2}$.
\end{proof}

\begin{Theorem} \label{thm:short-marking}(Theorem E in \cite{LRT14})
  For any $X,Y\in \mc{T}(S_{g,n})$, we have:
  \begin{align*}
    d_{Th}(X,Y) \pe \max_{\beta\in \mu_X} \log \frac{l_{\beta}(Y)}{l_{\beta}(X)}
  \end{align*}
\end{Theorem}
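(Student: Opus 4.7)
The plan is to establish both directions of the coarse equality separately. The easy direction $\max_{\beta \in \mu_X} \log \frac{l_{\beta}(Y)}{l_{\beta}(X)} \le d_{Th}(X,Y)$ is immediate from the first characterization of $d_{Th}$ in Lemma \ref{lem:dTh-defined}: taking the supremum of $\log(l_{\alpha}(Y)/l_{\alpha}(X))$ over all simple closed curves $\alpha$ dominates the maximum over the finite subcollection $\mu_X$, with no additive error.

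For the harder direction, I would prove the pointwise statement that for \emph{any} simple closed curve $\gamma$ on $S$,
\begin{align*}
  \log \frac{l_{\gamma}(Y)}{l_{\gamma}(X)} \pl \max_{\beta \in \mu_X} \log \frac{l_{\beta}(Y)}{l_{\beta}(X)}
\end{align*}
with additive constant depending only on the topology of $S$; then taking the supremum over $\gamma$ recovers $d_{Th}(X,Y)$ by Lemma \ref{lem:dTh-defined}. The principal tool is the length decomposition formula (Proposition 3.1 of \cite{LRT14}): for any $Z \in \mc{T}(S)$ and any simple closed curve $\gamma$,
\begin{align*}
  l_{\gamma}(Z) \te \sum_{\beta \in \mu_X} i(\gamma,\beta)\, l_{\bar\beta}(Z),
\end{align*}
with multiplicative constants depending only on $S$. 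Granted this, the argument is the elementary inequality that a ratio of positive sums with common coefficients $a_i = i(\gamma,\beta)$ is bounded above by the maximum ratio of their terms:
\begin{align*}
  \frac{l_{\gamma}(Y)}{l_{\gamma}(X)} \tl \frac{\sum_{\beta \in \mu_X} i(\gamma,\beta)\, l_{\bar\beta}(Y)}{\sum_{\beta \in \mu_X} i(\gamma,\beta)\, l_{\bar\beta}(X)} \le \max_{\beta \in \mu_X} \frac{l_{\bar\beta}(Y)}{l_{\bar\beta}(X)}.
\end{align*}
Since $\bar\beta \in \mu_X$ whenever $\beta \in \mu_X$, taking logarithms gives the result.

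The main obstacle is establishing the upper bound $l_{\gamma}(Y) \tl \sum_{\beta \in \mu_X} i(\gamma,\beta)\, l_{\bar\beta}(Y)$ --- the length estimate at $Y$ expressed in terms of the short marking of $X$ rather than of $Y$. Geometrically, one realizes $\gamma$ on $Y$ as a concatenation of arcs, each crossing some $\beta \in \mu_X$ once, whose lengths are controlled by the geometry of $Y$ cut along the pants curves in $\mu_X$, with $\bar\beta$ furnishing a natural length scale. The subtlety lies in handling curves $\bar\beta$ whose length on $Y$ differs significantly from their length on $X$: when $\bar\beta$ is very short on $Y$ one invokes collar-lemma estimates in the same spirit as Lemma \ref{lem:thick-part-intersections}, while when it is long one uses the thick-part intersection bound. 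Because this packaging is carried out carefully in \cite{LRT14}, I would ultimately cite the proof there rather than reprove it in this exposition.
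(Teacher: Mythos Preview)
The paper does not supply its own proof of this statement: Theorem~\ref{thm:short-marking} is simply quoted as Theorem~E of \cite{LRT14} and used as a black box. There is therefore no in-paper argument to compare your proposal against, and your final decision to cite \cite{LRT14} is exactly what the paper itself does.

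That said, one point in your sketch is misstated and worth flagging. You write the length decomposition formula as
\[
  l_{\gamma}(Z) \te \sum_{\beta \in \mu_X} i(\gamma,\beta)\, l_{\bar\beta}(Z)
\]
for \emph{any} $Z$, with the marking $\mu_X$ fixed at $X$. Proposition~3.1 of \cite{LRT14} asserts this only when the marking is a short marking \emph{of the point $Z$ itself}; indeed the paper quotes it (just above Theorem~\ref{thm:short-marking}) only as the lower bound $l_{\alpha}(X) \ge C\sum_{\beta\in \mu_X} i(\alpha,\beta)l_{\bar\beta}(X)$ at $X$. You do correctly isolate the genuine difficulty --- obtaining the upper bound $l_{\gamma}(Y) \tl \sum_{\beta \in \mu_X} i(\gamma,\beta)\, l_{\bar\beta}(Y)$ at $Y$ relative to the $X$-marking --- but the preceding display should not present this as already contained in Proposition~3.1. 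Since you ultimately defer to \cite{LRT14} for exactly this step, the proposal is fine as an expository outline; just adjust the attribution so that Proposition~3.1 is invoked only at $X$, with the $Y$-estimate named as the additional ingredient that \cite{LRT14} supplies in the course of proving Theorem~E.
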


We now prove Proposition \ref{prop:earthquake-bound}:

\begin{proof}
  Let $\mu_X$ be a short marking on $X$. For 
  any $\beta \in \mu_X$, if $\beta$ is disjoint from $\alpha$, then 
  $l_{\beta}(Eq_{\alpha,t}(X)) = l_{\beta}(X)$, and so 
  $\log \frac{l_{\beta}(Eq_{\alpha,t}(X))}{l_{\beta}(X)} = 0$. Otherwise, 
  we can bound the length $l_{\beta}(Eq_{\alpha,t}(X))$ by:
  \begin{align*}
    l_{\beta}(Eq_{\alpha,t}(X)) \le l_{\beta} + t i(\alpha,\beta)
  \end{align*}
  In particular, by Theorem \ref{thm:short-marking},
  \begin{align*}
    d_{Th}(X,Eq_{\alpha,t}(X)) \pe \max_{\beta\in \mu_X} \log \left(1 + t\frac{i(\alpha,\beta)}{l_{\beta}(X)}\right)
  \end{align*}
  Applying Proposition \ref{prop:earthquake-bound}, we get that
  $\frac{i(\alpha,\beta)}{l_{\beta}} \tl e^{l_{\alpha}/2}$, meaning
  that $d_{Th}(X,Eq_{\alpha,t}(X)) \pl \log(e^{l_{\alpha}/2}t)$, as desired.
\end{proof}

\subsection[Bounded Envelopes]{Bounded Envelopes in $\mc{T}(S)$}
\label{subsec:bounded-envelope}
In this section, we prove that geodesic envelopes have uniformly 
bounded width in $\mc{T}(S_{1,1})$ 
and in $\mc{T}(S_{0,4})$. We first show:

\begin{Lemma} \label{lem:uniform-bound-for-scc}
  Let $S$ be the once-punctured torus or the four-times 
  punctured sphere. There exists some $B = B(S)\in \R$ such that for
  any $X,Y \in \mc{T}(S)$, we have that if $\Lambda(X,Y)$ is a simple
  closed curve, then $w(X,Y) < B$.
\end{Lemma}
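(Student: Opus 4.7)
The plan is to leverage the low complexity of $S_{1,1}$ and $S_{0,4}$: in both surfaces $\dim \mc{T}(S) = 2$, so $\mc{P} = \{\alpha\}$ is itself a pair-of-pants decomposition and $(l_\alpha, \tau_\alpha)$ are global Fenchel-Nielsen coordinates. The first step is to observe that every point $Z \in \Env_t(X,Y)$ has the same $\alpha$-length $e^t l_\alpha(X)$: indeed, $d_{Th}(X,Z) = t$ gives $l_\alpha(Z) \le e^t l_\alpha(X)$, while $d_{Th}(Z,Y) = d_{Th}(X,Y) - t$ gives $l_\alpha(Y) \le e^{d_{Th}(X,Y) - t}\, l_\alpha(Z)$, and since $\alpha = \Lambda(X,Y)$ implies $l_\alpha(Y) = e^{d_{Th}(X,Y)}\, l_\alpha(X)$, these inequalities force equality. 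Consequently $\Env_t(X,Y)$ lies on a single level set of $l_\alpha$, and by the Fenchel-Nielsen description any two points $Z_1, Z_2$ in $\Env_t(X,Y)$ satisfy $Z_2 = Eq_{\alpha, s}(Z_1)$ for a unique $s \in \R$. The problem thus reduces to bounding $|s|$.

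To bound $|s|$, I would use that by Theorem \ref{thm:conv-hull-env0} the extremal one-jets in $\Env_0(X,Y)$ are stretch vectors $v_{\lambda_i}$ with $\lambda_i \in M(\alpha)$, so the extreme $\tau_\alpha$-coordinates within $\Env_t(X,Y)$ are realized along stretch paths $X_t^{\lambda_i}$ emanating from $X$. Combining Lemma \ref{lem:twist-computed} with the explicit formulas (\ref{eq:3-symmetric}), (\ref{eq:2-symmetric}), (\ref{eq:a-symmetric}) from subsection \ref{subsec:twist-compute} and simplifying via Fact \ref{fact:factoring-lengths-out}, the aim is to prove that
\begin{align*}
|s| \;\le\; \sup_{i,j}\,\bigl|\tau_\alpha(X_t^{\lambda_i}) - \tau_\alpha(X_t^{\lambda_j})\bigr| \;\tl\; e^{-l_\alpha(X_t)/2}
\end{align*}
when $l_\alpha(X_t)$ is large, while $|s|$ remains uniformly bounded when $l_\alpha(X_t)$ lies in a bounded interval. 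Plugged into Proposition \ref{prop:earthquake-bound},
\begin{align*}
d_{Th}(Z_1,Z_2) \;\le\; \log\bigl(e^{l_\alpha(Z_1)/2}\,|s|\bigr) + C,
\end{align*}
so in the large-$l_\alpha$ regime the two exponentials cancel and in the bounded-$l_\alpha$ regime $\log|s|$ is controlled; either way $d_{Th}(Z_1,Z_2)$ is bounded by a uniform constant $B = B(S)$ independent of $X$, $Y$, and $t$.

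The main obstacle is establishing the decay $|s| \tl e^{-l_\alpha(X_t)/2}$: each $\Delta^i_\lambda(t)$ has the form $\tfrac12\log R^i_\lambda(t)$ for a rational expression in the variables $e^{l_\gamma(X_t)}$, where $\gamma$ ranges over cuffs of the adjacent pair of pants, and one must verify that for any two completions $\lambda, \nu$ of $\alpha$ the leading $e^{l_\alpha(X_t)}$ terms cancel after invoking Fact \ref{fact:factoring-lengths-out}, leaving a correction of order $e^{-l_\alpha(X_t)/2}$. Care must be taken with the peculiarities of $S_{1,1}$, where the pair of pants adjacent to $\alpha$ on both sides coincides and has a cusp, and one must also confirm that the $\tau_\alpha$-extremes of $\Env_t(X,Y)$ are genuinely achieved by stretch paths emanating from $X$ (rather than by concatenations of stretch paths), which ultimately rests on the extremal characterization given by Theorem \ref{thm:stretch-vectors-extremal}.
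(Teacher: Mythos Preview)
Your overall architecture matches the paper's: reduce to an earthquake along $\alpha$, bound the twist parameter $|s|$ via the explicit $\Delta$-computations, and feed this into Proposition~\ref{prop:earthquake-bound}. However, there are two genuine gaps.

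First, the decay estimate
\[
\sup_{i,j}\bigl|\tau_\alpha(X_t^{\lambda_i}) - \tau_\alpha(X_t^{\lambda_j})\bigr| \;\tl\; e^{-l_\alpha(X_t)/2}
\]
is false for stretch paths emanating \emph{forward} from $X$. By Lemma~\ref{lem:twist-computed} this forward twist width equals $4e^{t}\log\coth(c\,l_\alpha(X)) - 4\log\coth(c\,l_\alpha(X)e^{t})$: the second term does decay exponentially in $l_\alpha(X_t)$, but the first term is $e^{t}$ times a constant depending only on $l_\alpha(X)$ and hence grows without bound as $t\to\infty$. The paper avoids this by bounding $\Env_t(X,Y)$ through the \emph{backward} cone $\In(Y,\alpha)$, i.e.\ via the negative-time stretch paths $Y_{-t}^{L}, Y_{-t}^{R}$ ending at $Y$; the corresponding twist width is $4e^{-t}\log\coth(c\,l_\alpha(Y)) - 4\log\coth(c\,l_\alpha(Y)e^{-t})$, whose first term now decays and whose second term yields exactly the $e^{-l_\alpha/2}$ control you want in the thick part. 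Your appeal to Theorem~\ref{thm:conv-hull-env0} only describes tangent vectors at $X$; globally $\Env(X,Y)$ is a quadrilateral bounded by two forward stretch segments out of $X$ and two backward stretch segments into $Y$, and it is the backward pair that carries the estimate.

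Second, your case split (``large $l_\alpha(X_t)$'' versus ``$l_\alpha(X_t)$ in a bounded interval'') omits the thin regime where $l_\alpha(X_t)\to 0$. There the backward twist width behaves like $\log(1/l_\alpha(X_t))$, so Proposition~\ref{prop:earthquake-bound} gives only $d_{Th}\pl \log\log(1/l_\alpha(X_t))$, which does not close. The paper treats this regime by an entirely different mechanism: it invokes the short-marking estimate (Theorem~\ref{thm:short-marking}) and bounds the ratio $l_\beta(Y_{-t}^{R})/l_\beta(Y_{-t}^{L})$ for the dual curve $\beta$ directly, using that when $l_\alpha$ is short one has $l_\beta \te \log(1/l_\alpha)$, which absorbs the growth of the twist.
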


From the lemma, we immediately obtain:
\begin{Theorem*}
  Let $S$ be the once-punctured torus or the four-times punctured
  sphere. There exists some $B\in \R$ such that for any $X,Y \in
  \mc{T}(S)$, $w(X,Y) < B$.
\end{Theorem*}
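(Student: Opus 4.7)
The plan is to deduce the theorem from Lemma~\ref{lem:uniform-bound-for-scc} via a short case analysis on $\Lambda(X,Y)$, using the fact that on $S_{1,1}$ and $S_{0,4}$ the possible chain-recurrent laminations are extremely restricted. On these surfaces any two disjoint essential non-peripheral simple closed curves are isotopic, so every multicurve is a single simple closed curve, and a Hausdorff limit of such is either a simple closed curve, a minimal filling (arational) lamination, or a simple closed curve enriched with asymptotic leaves spiraling on both sides (the latter arising as limits of high iterates of Dehn twists).

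The argument then splits into two cases. If $\Lambda(X,Y)$ is a single simple closed curve, Lemma~\ref{lem:uniform-bound-for-scc} directly gives $w(X,Y) < B$. Otherwise, I would argue that $\Lambda(X,Y)$ is already maximal among chain-recurrent laminations: if $\Lambda(X,Y)$ contains no simple closed curve, then on these surfaces it must be a minimal filling lamination, which is maximal chain-recurrent; and if $\Lambda(X,Y)$ strictly contains a simple closed curve $\alpha$, its chain-recurrent enrichment must include spiraling leaves on both sides of $\alpha$ (since any approximating sequence of simple closed curves accumulates symmetrically on both sides), and these two spirals already complete $\alpha$ to a maximal chain-recurrent lamination inside the pair-of-pants complement of $\alpha$. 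In the maximal case, by the uniqueness result of \cite{DLRT20} cited in Section~\ref{sec:background}, the geodesic from $X$ to $Y$ is unique and $w(X,Y)=0$.

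Taking $B$ to be the constant from Lemma~\ref{lem:uniform-bound-for-scc}, both cases yield $w(X,Y) < B$, finishing the theorem. The main obstacle in this reduction is the classification claim in the second case, namely that a chain-recurrent lamination strictly containing a simple closed curve in $S_{1,1}$ or $S_{0,4}$ is necessarily maximal chain-recurrent. This ultimately rests on the fact that the complement of a simple closed curve on these surfaces is a single pair of pants (with one cusp in $S_{1,1}$, or two disjoint twice-cusped pairs of pants in $S_{0,4}$), leaving essentially no room for a partial, non-maximal chain-recurrent enrichment, and everything else in the theorem reduces to Lemma~\ref{lem:uniform-bound-for-scc}.
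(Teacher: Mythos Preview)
Your proposal is essentially correct and follows the same overall strategy as the paper: split into the case where $\Lambda(X,Y)$ is maximal chain-recurrent (unique geodesic, width zero) and the case involving a simple closed curve (invoke Lemma~\ref{lem:uniform-bound-for-scc}).

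The one noteworthy difference is in how the ``strictly contains a simple closed curve'' case is handled. You try to argue directly that such a $\Lambda(X,Y)$ must already be maximal chain-recurrent, and you correctly flag this classification as the main obstacle. The paper sidesteps this entirely. Its dichotomy (citing Proposition~26 of \cite{BZ05} for $S_{0,4}$ and Theorem~1.1 of \cite{DLRT20} for $S_{1,1}$) is simply: either $\Lambda(X,Y)$ is maximal, or $\Lambda(X,Y)$ \emph{contains} a simple closed curve $\alpha$. In the second case the paper does not care whether $\Lambda(X,Y)$ equals $\alpha$ or strictly contains it; it just observes that $\Env(X,Y) \subset \Outenv(X,\alpha)\cap \In(Y,\alpha)$, and the width of the latter is what is actually bounded by the computations behind Lemma~\ref{lem:uniform-bound-for-scc} (namely Lemma~\ref{lem:stretch-paths-uniform-bound}). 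So the paper's route is slightly more economical: it never needs the claim that a chain-recurrent lamination strictly containing $\alpha$ is maximal. Your route is also valid---the classification you need is exactly what those same cited references provide---but you are doing a little extra work that the containment $\Env(X,Y)\subset \In(Y,\alpha)$ makes unnecessary.
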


\begin{proof}
  Let $X,Y \in \mc{T}(S)$ be arbitrary, and let 
  $\Lambda(X,Y)$ be the maximally-stretched lamination 
  from $X$ to $Y$. For the four-times punctured sphere, 
  by Proposition 26 of 
  \cite{BZ05}, $\Lambda(X,Y)$ is either maximal, 
  in which case there is a unique geodesic from 
  $X$ to $Y$, or $\Lambda(X,Y)$ 
  contains a simple closed curve, $\alpha$. 
  For the once-punctured torus, the same result 
  holds by Theorem 1.1 of \cite{DLRT20}. Now, 
  notice that geodesic from $X$ to $Y$ must be contained 
  in $\Outenv(X,\alpha) \cap In(Y,\alpha)$, which, by 
  Lemma \ref{lem:uniform-bound-for-scc}, has width bounded by $B$.
\end{proof}

The rest of this section is dedicated to proving
Lemma \ref{lem:uniform-bound-for-scc}. We first fix some 
notation. Fix $Y\in \mc{T}(S)$ 
and a simple closed curve $\alpha$.
\begin{itemize}
  \item Denote by $\lambda^{L}$ and $\lambda^R$ be the two
    maximal chain-recurrent laminations containing $\alpha$, twisting
    to the left and right (respectively) around $\alpha$ 
    (see Figure \ref{fig:lambda-RL})
  \item If $S = S_{1,1}$, define $l_0 = l_{\alpha}(Y)/2$, 
    if $S = S_{0,4}$, define $l_0 = l_{\alpha}(Y)/4$. 
    Define $l_s = l_0e^s$ for any real $s$.
  \item For $Y\in \mc{T}(S)$, 
    we define $Y_{-t}^{L}$ as the unique point in $\mc{T}(S_{0,4})$ 
    such that $\Lambda(Y_{-t}^{L},Y) = \lambda^L$, 
    and $d_{Th}(Y_{-t}^L,Y) = t$. Define 
    $Y_{-t}^R$ in the same manner. 
  \item Let $P_1, P_2$ be the two (possibly non-distinct) pairs of
    pants on opposite sides of $\alpha$. We define 
    $\Delta_i^L(t) = \Delta_{P_i,\alpha,\lambda^L}(t)$ 
    and $\Delta_i^R(t) = \Delta_{P_i,\alpha,\lambda^R}(t)$ 
    (see Section \ref{sec:inf-env-notation})
\end{itemize}

\begin{figure}[h!]
  \begin{center}
    \includegraphics[width=0.35\textwidth]{./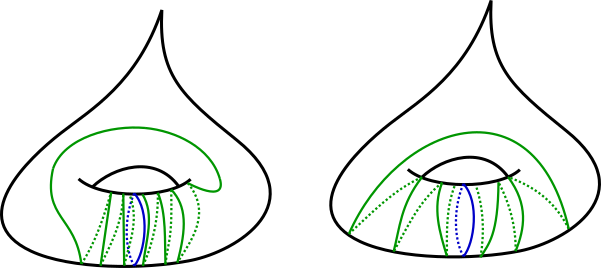}
  \end{center}
  \caption{Definition of $\lambda^R$ (right) and $\lambda^L$ (left). 
  The blue curve is $\alpha$}\label{fig:lambda-RL}
\end{figure}

Employing the above notation, and following the 
computations in Lemma \ref{lem:twist-computed}, we 
compute the relative twisting 
$\mathrm{Twist}_{\alpha}(Y_{-t}^L,Y_{-t}^R)$ 
in either case of $S$:

\begin{Lemma}
  Let $\alpha,l_0,l_s,Y_{-t}^{\pm}\in \mc{T}(S)$ be defined as above,
  then:
  \begin{align}\label{eq:four-punctured-twist-width}
    \mathrm{Twist}_{\alpha}(Y_{-t}^{+},Y_{-t}^{-}) 
    = 4e^{-t}\log\coth(l_0/2)-
    4\log\coth\left(\frac{e^{-t}l_0}{2}\right)
  \end{align}
\end{Lemma}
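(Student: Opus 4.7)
My plan is to apply Lemma \ref{lem:twist-computed} directly to the backward stretch along each of $\lambda^L$ and $\lambda^R$, and then to evaluate the resulting shear-twist differences $\Delta^i_{\lambda^L}(s)-\Delta^i_{\lambda^R}(s)$ using the explicit formula (\ref{eq:3-symmetric}) of subsection \ref{subsec:twist-compute}, adapted to pairs of pants with cusps.

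The first step is to subtract Lemma \ref{lem:twist-computed} applied to $\lambda^L$ from the same applied to $\lambda^R$. The $\tau_\alpha(Y)e^{-t}$ terms cancel, leaving
\begin{align*}
\mathrm{Twist}_\alpha(Y_{-t}^L,Y_{-t}^R)=e^{-t}D(0)-D(-t),
\end{align*}
where $D(s):=\sum_{i=1,2}\bigl(\Delta^i_{\lambda^L}(s)-\Delta^i_{\lambda^R}(s)\bigr)$. It therefore suffices to establish the identity $D(s)=4\log\coth(l_s/2)$ for every $s$, after which the claimed formula follows by evaluating at $s=0$ and $s=-t$.

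Next, I set up the pair-of-pants geometry in each case. For $S=S_{0,4}$ the curve $\alpha$ separates the surface into two pairs of pants $P_1,P_2$, each with one geodesic cuff (a copy of $\alpha$) and two cusps. For $S=S_{1,1}$ the curve is non-separating and $P_1=P_2$ is a single pair of pants with two cuffs of length $l_\alpha$ and one cusp. In both situations the chain-recurrent part of $\lambda^{L/R}$ restricted to each pair of pants is just $\alpha$, so by Lemma \ref{lem:v_lambda-well-defined} any ideal triangulation of the pair of pants may be chosen for the shear computation; I pick the $3$-symmetric type in which each leaf connects distinct cuffs. The swap symmetry of each pair of pants between the two sides of $\alpha$ then forces $\Delta^1_\lambda=\Delta^2_\lambda$, so $D(s)=2\bigl(\Delta^1_{\lambda^L}(s)-\Delta^1_{\lambda^R}(s)\bigr)$. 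Substituting $\epsilon_1=\pm 1$ into formula (\ref{eq:3-symmetric}) with the non-$\alpha$ cuff lengths set to zero kills the factors $e^{-\epsilon_j l_j}$, so that each $\Delta^1_\lambda$ reduces to an explicit function of $l_\alpha$ and $\epsilon_1$ alone.

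The final step is algebraic. After subtracting $\Delta^1_{\lambda^R}$ from $\Delta^1_{\lambda^L}$ I simplify using
\begin{align*}
1-e^{-l}=2e^{-l/2}\sinh(l/2),\qquad 1+e^{-l}=2e^{-l/2}\cosh(l/2),
\end{align*}
to rewrite the difference in the form $c\log\coth(l_\alpha(X_s)/k)$ for the appropriate integers $c,k$ (with $k=4$ in the $S_{0,4}$ case and $k=2$ in the $S_{1,1}$ case), matching the definition of $l_s=l_\alpha(X_s)/k$. Doubling by the symmetry $\Delta^1=\Delta^2$ yields $D(s)=4\log\coth(l_s/2)$, and combining with the Lemma's $e^{-t}$ factor delivers the formula.

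The hard part is the last step: one must carefully track signs (the quantity $x+1$ appearing in (\ref{eq:3-symmetric}) is negative when $\epsilon_1=+1$, so absolute values must be retained when taking logarithms), and the simplification into a clean $\coth$ form benefits from using Fact \ref{fact:factoring-lengths-out} to normalize the rational expressions $R^i_{P,\lambda}$ by convenient powers of $e^{l_\alpha(t)}$ before taking logs. The two topological cases $S_{0,4}$ and $S_{1,1}$ must be checked separately because the number of pairs of pants and the number of cusps differ, but in both cases the combinations of exponentials produced by formula (\ref{eq:3-symmetric}) collapse to the same $\log\coth$ expression.
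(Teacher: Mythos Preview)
Your overall strategy---subtracting the two instances of Lemma~\ref{lem:twist-computed} to reduce to computing $D(s)=\sum_i(\Delta^i_{\lambda^L}(s)-\Delta^i_{\lambda^R}(s))$, and then evaluating via the explicit formulae of subsection~\ref{subsec:twist-compute}---is exactly the paper's approach, and your treatment of $S_{1,1}$ with the $3$-symmetric formula~(\ref{eq:3-symmetric}) matches the paper.

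There is, however, a genuine gap in your $S_{0,4}$ case. Your claim that ``the chain-recurrent part of $\lambda^{L/R}$ restricted to each pair of pants is just $\alpha$'' is false. The lamination $\lambda^L$ is itself maximal chain-recurrent: it arises as the Hausdorff limit of $T_\alpha^n(\beta)$ for a dual curve $\beta$, and since $i(\alpha,\beta)=2$ in $S_{0,4}$, the limit contains in each pair of pants a bi-infinite leaf $\gamma_{11}$ with \emph{both} ends spiralling (left) around $\alpha$. Consequently every completion of $\lambda^L$ already has four leaf-ends at $\alpha$ and is $2$-symmetric; the $3$-symmetric triangulation (with leaves $\gamma_{12},\gamma_{13},\gamma_{23}$ and no $\gamma_{11}$) is \emph{not} a completion of $\lambda^L$, so Lemma~\ref{lem:v_lambda-well-defined} does not let you swap to it.

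This is not merely a cosmetic issue: the two formulae give different answers. With $l_1=l_\alpha=4l_s$ and $l_2=l_3=0$, formula~(\ref{eq:2-symmetric}) yields $\Delta^L-\Delta^R=2\log\coth(l_s)$ per pair of pants, hence $D(s)=4\log\coth(l_s)$; your proposed formula~(\ref{eq:3-symmetric}) instead gives $\Delta^L-\Delta^R=-2l_s-2\log(1-e^{-2l_s})$, hence $D(s)=-4l_s-4\log(1-e^{-2l_s})$. The discrepancy $4\log(2\cosh(l_s))$ is not a multiple of $l_\alpha(s)$, so it is not absorbed by Fact~\ref{fact:factoring-lengths-out}, and the resulting twist widths genuinely differ. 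The paper accordingly uses~(\ref{eq:2-symmetric}) for $S_{0,4}$; you should do the same.
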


\begin{proof}
  We first treat the case where $S = S_{0,4}$. In this 
  case, we use the computations from 
  Lemma \ref{lem:twist-computed} to get:
  \begin{align*}
    \mathrm{Twist}_{\alpha}(Y_{-t}^L, Y_{-t}^R) 
    = e^{-t}(\Delta_1^L(0) + \Delta_2^L(0))
    + e^{-t}(\Delta_1^R(0) + \Delta_2^R(0))\\
    - (\Delta_1^L(-t) + \Delta_2^L(-t))
    - (\Delta_1^R(-t) + \Delta_2^R(-t))
  \end{align*}

  Using Example \ref{eq:2-symmetric} with $l_2 = l_3 = 0$, 
  and $s_{11} = -2l_s$, we get 
  $x = 2\frac{1+e^{-l_s}}{e^{-2l_s}-1}$, and so:
  \begin{align*}
    \Delta_1^L = \Delta_2^L
    &= \frac12\log((x+1)^2)\\
    &= \log\left(\frac{e^{-2l_s}+1+2e^{-l_s}}{1-e^{-2l_s}}\right)\\
    &= \log\coth(l_s)
  \end{align*}
  where in the last line we used Fact \ref{fact:factoring-lengths-out}. 
  Similarly, 
  and $\Delta_1^R = \Delta_2^R = 
  -\log\left(\coth(l_s\right)$, 
  and the result follows.

  Next, we consider the case where $S = S_{1,1}$. 
  Using Example \ref{eq:3-symmetric} 
  with $l_2 = l_1 = 2l_s$ and $l_3 = 0$, and 
  getting $s_{12} = -2l_s$, and $s_{23} = \frac12(l_1-l_2-l_3) = 0$.
  This gives $x = \frac{1+e^{-2l_s}}{e^{-2l_s}-1}$, 
  and we compute:
  \begin{align*}
    x+1 &= \frac{2e^{-2l_s}}{e^{-2l_s}-1}\\
    x+\frac{e^{s_{23}}+e^{-2l_s}}{e^{s_{23}}+1}
    &= \frac12 \frac{(1+e^{-2l_s})^2}{e^{-2l_s}-1}
  \end{align*}
  Thus, after applying Fact \ref{fact:factoring-lengths-out}, 
  we get:
  \begin{align*}
    \Delta_1^L = \log \left(\coth(l_s)\right)
  \end{align*}
  $\Delta_2^L$, and $\Delta_i^R$ are similarly computed, 
  giving the result.
\end{proof}

\begin{Remark}
  Parts of the above lemma also follows from formulas 18 and 19 in
  \cite{DLRT20}, with slight modifications to treat negative $t$.
  However, it is a reaffirming sanity check that the computations
  preformed in Section \ref{sec:infinitesimal-envelope} agree with those
  done in \cite{DLRT20}
\end{Remark}

\begin{Lemma} \label{lem:systole-lemma}
  Let $S$ be the once-punctured torus or 
  the four-times punctured sphere, and let 
  $\alpha$ be a simple closed curve in $S$. 
  Then there exists a constant $\eps_0$ 
  such that for any $X \in \mc{T}(S)$, 
  if $l_{\alpha}(X) < \eps_0$, then 
  $\alpha$ is the systole of $X$.
\end{Lemma}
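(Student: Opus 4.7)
The plan is to combine a topological observation with the collar lemma. The topological observation is that on both $S_{1,1}$ and $S_{0,4}$, any essential simple closed curve disjoint from $\alpha$ must be isotopic to $\alpha$: cutting $S_{1,1}$ along any essential simple closed curve yields a pair of pants whose only essential simple closed curve is the cuff corresponding to $\alpha$, and cutting $S_{0,4}$ along any essential simple closed curve yields two three-holed spheres, neither of which contains any essential simple closed curve in its interior. Consequently, every essential simple closed curve on $S$ other than $\alpha$ has positive geometric intersection number with $\alpha$.

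Next I would invoke the collar lemma: on any $X \in \mc{T}(S)$, the geodesic representative of $\alpha$ has an embedded collar of half-width
\begin{align*}
  w(l_\alpha(X)) = \sinh^{-1}\!\left(\frac{1}{\sinh(l_\alpha(X)/2)}\right),
\end{align*}
which tends to $+\infty$ as $l_\alpha(X) \to 0$. Any closed geodesic $\beta$ transverse to $\alpha$ must cross this collar once for each geometric intersection point, so
\begin{align*}
  l_\beta(X) \ge 2\, i(\alpha,\beta)\, w(l_\alpha(X)) \ge 2\, w(l_\alpha(X)).
\end{align*}

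Then I would choose $\eps_0 > 0$ small enough that $2\, w(\eps_0) > \eps_0$; since $w(\cdot)$ blows up as its argument shrinks, any sufficiently small $\eps_0$ works (and this constant depends only on the combinatorics of $S$, hence is uniform over $\mc{T}(S)$). Now suppose $X \in \mc{T}(S)$ satisfies $l_\alpha(X) < \eps_0$, and let $\beta$ be any essential simple closed curve on $X$ that is not isotopic to $\alpha$. By the topological observation, $i(\alpha,\beta) \ge 1$, so $l_\beta(X) \ge 2 w(l_\alpha(X)) \ge 2 w(\eps_0) > \eps_0 > l_\alpha(X)$, showing that $\alpha$ realizes the systole.

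The only potentially delicate point is the topological classification of disjoint simple closed curves on $S_{1,1}$ and $S_{0,4}$, but this is standard and unambiguous in both cases; everything else is a direct application of the collar lemma with no estimates beyond those already used in subsection \ref{subsec:bounded-envelope}.
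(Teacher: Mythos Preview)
Your proof is correct and follows essentially the same approach as the paper: both use the topological fact that on $S_{1,1}$ and $S_{0,4}$ any two distinct essential simple closed curves must intersect, and then apply the collar lemma to force every other curve to be long once $\alpha$ is short. The paper phrases the threshold in terms of the Bers constant while you work directly with the collar width $w(\eps_0)$, but the argument is the same.
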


\begin{proof}
  Any pair of distinct simple closed curves 
  on $S$ must intersect. Therefore, if 
  $\alpha$ is sufficiently short, then 
  by the collar lemma, any other simple closed 
  curve in $S$ must be at least the length 
  of the Bers constant of $S$. In particular, 
  this implies that $\alpha$ is the shortest 
  curve on $S$.
\end{proof}

We henceforth write $\eps_S$ to be the constant 
from Lemma \ref{lem:systole-lemma}.

\begin{Lemma} \label{lem:dual-curve-length-ratio}
  Let $S = S_{1,1}$ or $S = S_{0,4}$, 
  and let $X\in \mc{T}(S)$. Let 
  $Y \in \mc{T}(S)$ be obtained from $X$ by 
  twisting along a simple closed curve $\alpha$
  Let $\beta = \bar\alpha$ be the dual curve 
  of $\alpha$ as in Definition \ref{def:short-marking}. 
  If $l_{\alpha}(X) < \eps_S$, then 
\end{Lemma}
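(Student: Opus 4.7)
The statement is cut off, but in context (we need to control lengths of curves in the short marking under earthquakes to apply the earthquake proposition via the short-marking characterization of $d_{Th}$), I expect the conclusion to be a coarse estimate on $l_\beta(Y)/l_\beta(X)$ in terms of the twisting parameter along $\alpha$ and the size of $l_\alpha$. The plan below proves such a bound.

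The plan is to work entirely inside the standard collar around $\alpha$. By Lemma \ref{lem:systole-lemma}, when $l_\alpha(X) < \eps_S$ the curve $\alpha$ is the systole, and by the collar lemma the embedded annular neighbourhood $U(\alpha)$ has width $w_\alpha \pe 2\log(1/l_\alpha)$ on each side. Since $S$ is $S_{1,1}$ or $S_{0,4}$, the dual $\beta=\bar\alpha$ intersects $\alpha$ minimally (once or twice), and the complement $S\ssm U(\alpha)$ is a union of topologically simple pieces whose contribution to $l_\beta$ is bounded by the Bers constant. Therefore $l_\beta(X) = L_{\mathrm{collar}}(X) + O(1)$, where $L_{\mathrm{collar}}(X)$ is the total length of the arcs of $\beta$ crossing $U(\alpha)$.

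First I would compute $L_{\mathrm{collar}}$ explicitly. In Fermi coordinates on the universal cover of the collar, $\alpha$ lifts to a geodesic axis and each arc of $\beta\cap U(\alpha)$ lifts to an arc connecting two boundary circles; the endpoints differ by the twist parameter $\tau_\alpha$ along $\alpha$ (measured in units of hyperbolic length along $\alpha$). The standard hyperbolic distance formula for such an arc gives
\begin{equation*}
\cosh L_{\mathrm{collar}} \;=\; \cosh(w_\alpha)\cosh(w_\alpha)\,-\,\sinh(w_\alpha)\sinh(w_\alpha)\cos\theta \,+\, (\text{twist shift}),
\end{equation*}
but more usefully, the arc satisfies
\begin{equation*}
\sinh\bigl(L_{\mathrm{collar}}/2\bigr) \;\te\; \frac{1}{l_\alpha} \,+\, |\tau_\alpha(Y)-\tau_\alpha(X)|,
\end{equation*}
up to uniform multiplicative error; this is the hyperbolic trigonometric identity for the length of a geodesic arc crossing a collar with prescribed twist. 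Taking logs gives $l_\beta \pe \log(1/l_\alpha) + \log\bigl(1 + l_\alpha\,|\Delta\tau_\alpha|\bigr)$.

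Second, since twisting along $\alpha$ leaves $l_\alpha$ unchanged and leaves intact the parts of $\beta$ outside the collar, the ratio $l_\beta(Y)/l_\beta(X)$ depends only on how the collar arcs change. Plugging in the asymptotic formula above for both $X$ and $Y$, and using $l_\alpha(X)=l_\alpha(Y)<\eps_S$, one obtains an expression of the form
\begin{equation*}
\frac{l_\beta(Y)}{l_\beta(X)} \;\te\; \frac{\log(1/l_\alpha) + \log(1 + l_\alpha|\tau_\alpha(Y)-\tau_\alpha(X)|)}{\log(1/l_\alpha)},
\end{equation*}
which is the clean ratio bound that the subsequent material in Section~\ref{subsec:bounded-envelope} will combine with Proposition~\ref{prop:earthquake-bound} and Theorem~\ref{thm:short-marking}.

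The main obstacle is bookkeeping: when $S=S_{0,4}$ the curve $\beta$ crosses the collar twice rather than once, and the two crossings may interact with the twist in opposite signs depending on which side of $\alpha$ one considers; I would handle this by lifting $\beta$ to a fundamental domain in the universal cover and treating each arc separately, then summing. A secondary obstacle is extracting a uniform multiplicative constant in the sinh-identity step, which I would handle by restricting to $l_\alpha$ less than a suitably small $\eps_S$ so that $\sinh$ and its argument agree up to a factor of $2$, and using $\sinh^{-1}(x) \pe \log x$ for $x\geq 1$. The bound on contributions outside the collar is uniform because $S\ssm U(\alpha)$ has bounded topological complexity.
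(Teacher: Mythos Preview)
The paper contains no proof of this lemma: the statement itself is truncated mid-sentence (it ends at ``then'') and is followed immediately by the next subsection heading with no proof environment. It appears to be an editorial remnant that was never completed; the work it would have done is absorbed directly into the proofs of the three unnamed lemmas in the ``Thin Part of $\mc{T}(S)$'' subsection.

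That said, your guess at the intended content is reasonable, and it is worth comparing your approach to what the paper actually uses downstream. The paper never computes arc lengths inside the collar via hyperbolic trigonometry as you propose. Instead it uses only two coarse facts: (i) the collar lemma together with Proposition~3.1 of \cite{LRT14} gives $l_\beta(X) \te \log(1/l_\alpha)$ when $l_\alpha$ is small, and (ii) the crude additive bound
\[
l_\beta(Y) \;\le\; l_\beta(X) + i(\alpha,\beta)\,l_\alpha\,|\Delta\tau_\alpha|,
\]
which comes straight from the fact that an earthquake shears each strand of $\beta$ by at most $l_\alpha|\Delta\tau_\alpha|$. Dividing through gives
\[
\frac{l_\beta(Y)}{l_\beta(X)} \;\tl\; 1 + \frac{l_\alpha\,|\Delta\tau_\alpha|}{\log(1/l_\alpha)},
\]
and this is exactly the inequality the paper plugs the explicit $\coth$ formula for $\Delta\tau_\alpha$ into. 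Your $\sinh$-identity route would refine the additive bound to something sharper, but the paper does not need that refinement: the crude bound already suffices once the $\coth$ asymptotics are fed in. Your first displayed formula for $\cosh L_{\mathrm{collar}}$ is also garbled (it mixes a law-of-cosines template with an undefined ``twist shift'' term), so if you pursued this route you would need to replace it with the correct orthogeodesic length formula in a hyperbolic cylinder.
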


\subsection{The Thin Part of $\mc{T}(S)$}

In this subsection, we show that 
$d_{Th}(Y_{-t}^L, Y_{-t}^R)$ is uniformly 
bounded if $l_0e^{-t}$ is small. 

\begin{Lemma}
  Let $\eps = \min(\eps_{S_{0,4}},\eps_{S_{1,1}},\log(2))$, and assume
  that $l_0 \le \eps$. Furthermore, 
  assume that $Y\in \mc{T}(S)$ for $S = S_{1,1}$ or 
  $S = S_{0,4}$. Then $d_{Th}(Y_{-t}^L, Y_{-t}^R)$ is bounded
  uniformly.
\end{Lemma}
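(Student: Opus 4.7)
The plan is to reduce $d_{Th}(Y_{-t}^L, Y_{-t}^R)$ to an earthquake distance along $\alpha$, estimate the twist using the closed form just computed, and then apply a short-curve refinement of Proposition~\ref{prop:earthquake-bound}.

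First, since $\alpha$ lies in both $\lambda^L$ and $\lambda^R$, the length of $\alpha$ evolves identically along both backward stretch paths, giving $l_\alpha(Y_{-t}^L) = l_\alpha(Y_{-t}^R) = l_\alpha(Y)\, e^{-t}$. Because $\{\alpha\}$ is already a pair-of-pants decomposition of both $S_{1,1}$ and $S_{0,4}$, the only Fenchel--Nielsen coordinate in which $Y_{-t}^L$ and $Y_{-t}^R$ can differ is the twist around $\alpha$. Hence $Y_{-t}^R = Eq_{\alpha,\tau}(Y_{-t}^L)$ with $\tau = \mathrm{Twist}_\alpha(Y_{-t}^L, Y_{-t}^R)$ given by \eqref{eq:four-punctured-twist-width}. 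Using the elementary bound $\log\coth(x/2) \le \log(2/x)$ for $x>0$, together with $l_0 \le \eps \le \log 2$ and $l_{-t} := l_0 e^{-t} \le l_0$, I get
\[
|\tau| \;\le\; 4e^{-t}\log(2/l_0) + 4\log(2/l_{-t}) \;\le\; 8\log(2/l_{-t}),
\]
so $|\tau| \pl \log(1/l_{-t})$.

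Next, I invoke the intermediate step inside the proof of Proposition~\ref{prop:earthquake-bound} instead of its final statement: combining Theorem~\ref{thm:short-marking} with the Lipschitz bound $l_\beta(Eq_{\alpha,\tau}(X)) \le l_\beta(X) + |\tau|\, i(\alpha,\beta)$ yields
\[
d_{Th}(Y_{-t}^L, Y_{-t}^R) \;\pe\; \max_{\beta \in \mu_X} \log\!\left(1 + |\tau|\,\frac{i(\alpha,\beta)}{l_\beta(X)}\right),
\]
where $X = Y_{-t}^L$. Shrinking $\eps$ if necessary so that $l_\alpha(X)$ is a bounded multiple of $l_{-t}$ and stays below $\eps_S$ for all $t \ge 0$, Lemma~\ref{lem:systole-lemma} makes $\alpha$ the systole of $X$, so $\mu_X = \{\alpha,\bar\alpha\}$. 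The $\beta=\alpha$ term vanishes since $i(\alpha,\alpha)=0$, and the collar lemma applied to the very short $\alpha$ produces a collar of width at least $2\log(1/l_{-t}) - O(1)$. Each of the $i(\alpha,\bar\alpha)$ crossings of $\bar\alpha$ with $\alpha$ must traverse this collar, forcing
\[
\frac{i(\alpha,\bar\alpha)}{l_{\bar\alpha}(X)} \;\tl\; \frac{1}{\log(1/l_{-t})}.
\]
Combined with the estimate on $|\tau|$, the product $|\tau|\cdot i(\alpha,\bar\alpha)/l_{\bar\alpha}(X)$ is $O(1)$, so $d_{Th}(Y_{-t}^L, Y_{-t}^R) \pe \log(1 + O(1))$ is uniformly bounded.

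The main obstacle is that Proposition~\ref{prop:earthquake-bound} as stated is too weak for this application: in the thin part the factor $e^{l_\alpha/2}$ is close to $1$, so its bound degenerates to $\log|\tau|$, which diverges as $t \to \infty$. What salvages the argument is the exact matching of scales in the thin part: $|\tau|$ grows like $\log(1/l_{-t})$ while $i(\alpha,\bar\alpha)/l_{\bar\alpha}(X)$ shrinks like $1/\log(1/l_{-t})$, and the two logarithms cancel. Getting the correct sharper version of Proposition~\ref{prop:earthquake-bound} via the collar lemma, rather than via the generic intersection estimate used there, is the only nontrivial point.
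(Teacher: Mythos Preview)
Your argument is correct and follows essentially the same route as the paper: both reduce, via Theorem~\ref{thm:short-marking} (Theorem~E of \cite{LRT14}), to bounding the length ratio of the dual curve $\bar\alpha$ under the earthquake of magnitude $|\tau|=|\mathrm{Twist}_\alpha(Y_{-t}^L,Y_{-t}^R)|$, then use the collar lemma to get $l_{\bar\alpha}\asymp\log(1/l_{-t})$ and the explicit twist formula to get $|\tau|\lesssim\log(1/l_{-t})$, so that the two logarithmic factors cancel. Your write-up is in fact slightly cleaner in that you track the intersection number $i(\alpha,\bar\alpha)$ explicitly and use the sharper denominator $\log(1/l_{-t})$ rather than $\log(1/\eps)$; the only cosmetic wrinkle is that ``shrinking $\eps$ if necessary'' to force $l_\alpha(X)\le\eps_S$ requires $\eps\le\eps_S/4$ (since $l_\alpha=2l_0$ or $4l_0$), which is not literally guaranteed by the stated $\eps$, but this is harmless for a uniform bound.
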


\begin{proof}
  Using Theorem E of \cite{DLRT20}, it suffices to check that
  $\frac{l_{\beta}(Y_{-t}^R)}{l_{\beta}(Y_{-t}^L)}$ is uniformly
  bounded, where $\beta$ is the dual curve to $\alpha$. Recall that
  $l_{-t} = l_0e^{-t} \pe l_{\alpha}(Y_{-t}^L) = l_{\alpha}(Y_{-t}^R)$.
  By the collar lemma, and Proposition 3.1 of \cite{LRT14}, we know
  that $l_{\beta}(Y_{-t}^L) \te \log(1/l_{-t}) \tg \log(1/\eps)$.
  Since $Y_{-t}^R$ is obtained from $Y_{-t}^L$ by twisting along
  $\alpha$ for time $\mathrm{Twist}_{\alpha}(Y_{-t}^L, Y_{-t}^R)$, we
  can estimate the length ratio of $\beta$ by:

  \begin{align*}
    \frac{l_{\beta}(Y_{-t}^R)}{l_{\beta}(Y_{-t}^L)}
    &\le \frac{l_{\beta}(Y_{-t}^L) + l_{\alpha}(Y_{-t}^L)
    |\mathrm{Twist}_{\alpha}(Y_{-t}^R,Y_{-t}^L)|}{l_{\beta}(Y_{-t}^L)}\\
    &\tl 1+ \frac{l_0e^{-t}}{\log(1/\eps)} 4\left(e^{-t}
    \log\coth(l_0) + \log\coth(l_0e^{-t})\right)
  \end{align*}
  Where we used the fact that $l_{\alpha}(Y_{-t}^L) 
  \le 4 l_0e^s$.

  We remark that by our choice of $\eps$, we have that
  $\frac{1}{l_0e^{-t}} \ge \log\coth(l_0e^{-t}) > 0$, meaning that:

  \begin{align*}
    \frac{l_{\beta}(Y_{-t}^R)}{l_{\beta}(Y_{-t}^L)}
    &\tl 1+ 4\frac{l_0e^{-t}}{\log(1/\eps)}
    \left(\frac{e^{-t}}{l_0} + \frac{e^{t}}{l_0}\right)
    =1 + \frac{1}{\log(1/\eps)} (e^{-2t} + 1)
  \end{align*}
  Which is uniformly bounded in $l_0$ and in $t$.
\end{proof}

\begin{Lemma}
  Assume that $\eps \le l_0\le 2$, and that $l_0e^{-t} < \eps$ from
  the previous lemma. Then $d_{Th}(Y_{-t}^L, Y_{-t}^R)$ is bounded
  uniformly.
\end{Lemma}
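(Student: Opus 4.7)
The hypothesis $l_0 e^{-t} < \eps$ implies $l_\alpha(Y_{-t}^L) = l_\alpha(Y_{-t}^R) < c\eps$, where $c \in \{2, 4\}$ depends on the surface. After shrinking $\eps$ once more if needed, Lemma \ref{lem:systole-lemma} ensures $\alpha$ is the systole of both $Y_{-t}^L$ and $Y_{-t}^R$, so a short marking on $Y_{-t}^L$ has the form $\{\alpha, \beta\}$ with $\beta$ dual to $\alpha$. Theorem \ref{thm:short-marking} then reduces the problem to bounding $\log(l_\beta(Y_{-t}^R)/l_\beta(Y_{-t}^L))$, since the $\alpha$-ratio is identically $1$.

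Since $\{\alpha\}$ is the entire pair-of-pants decomposition of both $S_{1,1}$ and $S_{0,4}$, the Fenchel--Nielsen coordinates of $Y_{-t}^L$ and $Y_{-t}^R$ agree in all entries except the twist at $\alpha$. Therefore $Y_{-t}^R$ is obtained from $Y_{-t}^L$ by an earthquake along $\alpha$ with signed displacement $\mathrm{Twist}_\alpha(Y_{-t}^L, Y_{-t}^R)$, and the earthquake length bound used in the proof of Proposition \ref{prop:earthquake-bound} yields
\[
l_\beta(Y_{-t}^R) \le l_\beta(Y_{-t}^L) + i(\alpha, \beta) \cdot |\mathrm{Twist}_\alpha(Y_{-t}^L, Y_{-t}^R)|.
\]

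Next I would estimate both sides. Because $l_0 \in [\eps, 2]$, the quantity $\log\coth(l_0/2)$ is bounded by a constant $C_\eps$; because $l_0 e^{-t} < \eps < 1$, we have $\log\coth(l_0 e^{-t}/2) \pe -\log(l_0 e^{-t}) \pe t$. Substituting these into formula \ref{eq:four-punctured-twist-width} gives $|\mathrm{Twist}_\alpha(Y_{-t}^L, Y_{-t}^R)| \pl t$. Simultaneously, the collar lemma gives $l_\beta(Y_{-t}^L) \pg \log(1/l_\alpha(Y_{-t}^L)) \pg t$ for large $t$, together with the uniform lower bound $l_\beta(Y_{-t}^L) \pg \log(1/\eps) > 0$. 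Since $i(\alpha, \beta) \le 2$ on both surfaces, the quotient $i(\alpha, \beta)|\mathrm{Twist}|/l_\beta(Y_{-t}^L)$ is uniformly bounded: for bounded $t$ the numerator is bounded and the denominator is a positive constant, whereas for large $t$ numerator and denominator both grow linearly in $t$. This yields a uniform bound on $\log(l_\beta(Y_{-t}^R)/l_\beta(Y_{-t}^L))$, and hence by Theorem \ref{thm:short-marking} a uniform bound on $d_{Th}(Y_{-t}^L, Y_{-t}^R)$. The reverse direction $d_{Th}(Y_{-t}^R, Y_{-t}^L)$ is handled by the same argument applied to a short marking on $Y_{-t}^R$.

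The principal obstacle will be maintaining uniformity across the entire parameter range. When $l_0$ is close to $\eps$, the constraint $l_0 e^{-t} < \eps$ permits $t$ arbitrarily close to $0$; at the other extreme, for large $t$ both $|\mathrm{Twist}|$ and $l_\beta(Y_{-t}^L)$ diverge, and one must verify that they grow at matching linear rates so that their quotient remains controlled. This matching is precisely what the collar lemma plus formula \ref{eq:four-punctured-twist-width} delivers, once one observes that $l_\alpha(Y_{-t}^L)$ is the single small quantity driving both asymptotics.
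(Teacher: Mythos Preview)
Your argument is correct and follows the same overall strategy as the paper: reduce via Theorem~\ref{thm:short-marking} to the $\beta$-ratio, use the earthquake length bound, and plug in formula~\eqref{eq:four-punctured-twist-width}. The only difference is in the final bookkeeping. The paper keeps the crude constant lower bound $l_\beta(Y_{-t}^L)\tg \log(1/\eps)$ and instead absorbs the small factor $l_0e^{-t}$ into the numerator, reducing the estimate to the observation that $x\log\coth(x)$ is uniformly bounded for $x\in(0,\eps)$. You instead drop that factor and use the sharper collar-lemma bound $l_\beta(Y_{-t}^L)\pg t$, so that numerator and denominator both grow linearly. Either computation closes the argument; yours is slightly more explicit about the two regimes (bounded~$t$ versus large~$t$), while the paper's is a one-line appeal to the boundedness of $x\log\coth(x)$.
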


\begin{proof}
  As in the previous lemma, we must estimate
  \begin{align*}
    \frac{l_{\beta}(Y_{-t}^R)}{l_{\beta}(Y_{-t}^L)}
    &\tl 1+ \frac{l_0e^{-t}}{\log(1/\eps)} 4\left(e^{-t}
    \log\coth(l_0) + \log\coth(l_0e^{-t})\right)
  \end{align*}
  The $\log\coth(l_0)$ term is bounded uniformly, 
  so it suffices to observe that 
  $l_0e^{-t} \log\coth(l_0e^{-t})$ is 
  bounded uniformly.
\end{proof}

\begin{Lemma}
  Assume that $2 \ge l_0$, and that $l_0e^{-t} < \eps$ from the
  previous lemmas. Then $d_{Th}(Y_{-t}^L, Y_{-t}^R)$ is bounded
  uniformly.
\end{Lemma}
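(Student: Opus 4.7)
I expect the statement is intended to handle the remaining case $l_0 \ge 2$ with $l_0 e^{-t} < \eps$ (the hypothesis $2 \ge l_0$ appears to be a typo, as the two previous lemmas already cover $l_0 \le 2$). My plan is to reuse the same short-marking reduction as in the first thin-part lemma, and show that the bounding expression simplifies precisely because $l_0$ being large forces the ``wrong-side'' shearing term $\log\coth(l_0)$ to be tiny, while $l_0 e^{-t} < \eps$ forces $\alpha$ to still be the systole on $Y_{-t}^L$ and $Y_{-t}^R$.

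The plan is as follows. First I would note that since $l_{\alpha}(Y_{-t}^L) = l_{\alpha}(Y_{-t}^R) \pe l_0 e^{-t} < \eps \le \eps_S$, Lemma \ref{lem:systole-lemma} tells us $\alpha$ is the systole on both surfaces, and in particular belongs to a short marking $\mu = \{\alpha, \beta\}$ with $\beta = \bar\alpha$ the short dual. By Theorem \ref{thm:short-marking}, bounding $d_{Th}(Y_{-t}^L, Y_{-t}^R)$ reduces to bounding the length ratios of $\alpha$ and $\beta$. The $\alpha$-ratio is $1$. For $\beta$, I would use the standard twist estimate
\begin{align*}
  \frac{l_{\beta}(Y_{-t}^R)}{l_{\beta}(Y_{-t}^L)}
  \le 1 + \frac{l_{\alpha}(Y_{-t}^L)\,|\mathrm{Twist}_{\alpha}(Y_{-t}^L,Y_{-t}^R)|}{l_{\beta}(Y_{-t}^L)},
\end{align*}
together with the collar-lemma estimate $l_{\beta}(Y_{-t}^L) \te \log(1/l_0 e^{-t})$.

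Next I would plug in the explicit formula from equation \eqref{eq:four-punctured-twist-width} for the twist, producing the quantity
\begin{align*}
  \frac{l_0 e^{-t}}{\log(1/l_0 e^{-t})}\cdot 4\left(e^{-t}\log\coth(l_0) + \log\coth(l_0 e^{-t})\right).
\end{align*}
Here the hypothesis $l_0 \ge 2$ is what makes this case work: $\log\coth(l_0)$ is uniformly bounded (and in fact exponentially small in $l_0$), so the $e^{-t}\log\coth(l_0)$ contribution is $O(1)$. The second contribution $l_0 e^{-t}\log\coth(l_0 e^{-t})/\log(1/l_0 e^{-t})$ is the only one requiring genuine care.

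For that term I would use the elementary inequality $\log\coth(x) \pe \log(1/x)$ valid for small $x > 0$ (since $\coth(x) = (1+e^{-2x})/(1-e^{-2x}) \pe 1/x$ as $x \to 0$). Then
\begin{align*}
  \frac{l_0 e^{-t}\,\log\coth(l_0 e^{-t})}{\log(1/l_0 e^{-t})} \tl l_0 e^{-t} < \eps,
\end{align*}
which is uniformly bounded. Combining, $l_{\beta}(Y_{-t}^R)/l_{\beta}(Y_{-t}^L)$ is uniformly bounded above, and by symmetry of the argument (swapping $L$ and $R$) the reciprocal is too, giving the desired bound on $d_{Th}(Y_{-t}^L, Y_{-t}^R)$ via Theorem \ref{thm:short-marking}. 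The main obstacle is verifying that the short-marking structure is truly the same as in the small-$l_0$ case; but this follows entirely from the systole lemma applied with $l_0 e^{-t} < \eps \le \eps_S$, so the regime $l_0 \ge 2$ enters only to tame $\log\coth(l_0)$, not to change the combinatorics of the marking.
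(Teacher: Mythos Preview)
Your proposal is correct and follows essentially the same route as the paper: you correctly identify the hypothesis as a typo for $l_0 \ge 2$, reduce via the short-marking estimate (Theorem~\ref{thm:short-marking}) to bounding the same ratio expression, and then control the two twist contributions separately. The only cosmetic difference is that the paper introduces the substitution $t_0 = \log l_0$, $s = t - t_0$ (so $l_0 e^{-t} = e^{-s}$, $\log(1/l_0 e^{-t}) = s$) and uses the cruder inequality $\log\coth(x) < 1/x$ on both terms, whereas you keep the original variables and use $\log\coth(x) \pe \log(1/x)$ for small $x$ on the second term; both lead to the same uniform bound.
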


\begin{proof}
  Let $t_0 = \log(l_0)$, and set $s = t-t_0$. Note 
  that $t_0 \ge \log(2) > 0$ and that $s\in (-\log(\eps),\infty)$ 
  is bounded from below. With this notation, 
  and the same argumens as before, we have:
  \begin{align*}
    \frac{l_{\beta}(Y_{-t}^R)}{l_{\beta}(Y_{-t}^L)}
    &\tl 1+ 4\frac{e^{-s}}{s}
    \|\left(e^{-s-t_0} \log\coth(e^{t_0})
    +\log\coth(e^{-s})\right)\|
  \end{align*}

  As $t_0$ ranges from $0$ to $\infty$, 
  we have that $\frac{1}{e^{t_0}} > \log\coth(e^{t_0})> 0$, 
  and similarly $\frac{1}{e^{-s}} > \log\coth(e^{-s}) > 0$,
  giving:
  \begin{align*}
    \frac{l_{\beta}(Y_{-t}^R)}{l_{\beta}(Y_{-t}^L)}
    &\tl 1+ 4\frac{e^{-s}}{s}\left(e^{-s}+e^s)\right)
  \end{align*}
  Which is bounded uniformly in $s$, since 
  $s$ is bounded from below by $-\log(\eps)$.
\end{proof}

Together, the previous three lemmas tell us:

\begin{Lemma} \label{lem:thin-part-uniform-bound}
  Let $\eps = \min(\eps_{S_{1,1}},\eps_{S_{0,4}},\log(2))$, 
  and assume that $l_0e^{-t} \le \eps$. Then 
  $d_{Th}(Y_{-t}^L, Y_{-t}^R)$ is bounded uniformly.
\end{Lemma}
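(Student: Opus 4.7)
The plan is to derive Lemma~\ref{lem:thin-part-uniform-bound} as an immediate consequence of the three preceding lemmas via a case analysis on the magnitude of $l_0$. The hypothesis $l_0 e^{-t}\le \eps$ is precisely the common standing assumption shared by all three preceding lemmas, while together those lemmas cover the three ranges $l_0\le \eps$, $\eps \le l_0\le 2$, and $l_0\ge 2$. Every choice of $Y\in \mc{T}(S)$ and simple closed curve $\alpha$ determines a single positive value $l_0 = l_\alpha(Y)/k \in (0,\infty)$ with $k\in\{2,4\}$, so these three ranges exhaust all possibilities.

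First I would unwind the definition $\eps = \min(\eps_{S_{1,1}}, \eps_{S_{0,4}}, \log(2))$ so that the constants implicit in each case are accessible, and then invoke the three preceding lemmas to obtain uniform upper bounds $B_1(S), B_2(S), B_3(S)$ on $d_{Th}(Y_{-t}^L, Y_{-t}^R)$ in the ranges $l_0\le\eps$, $\eps\le l_0\le 2$, and $l_0\ge 2$ respectively. Setting $B(S) = \max\{B_1(S), B_2(S), B_3(S)\}$ then yields a single uniform bound valid for every $Y$ and every $t$ with $l_0 e^{-t}\le \eps$.

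There is no genuine obstacle at this stage beyond the trichotomy argument itself: the real work has already been carried out in the three preceding lemmas, which combine formula~\eqref{eq:four-punctured-twist-width} for the relative twist $\mathrm{Twist}_\alpha(Y_{-t}^L, Y_{-t}^R)$ with the short-marking characterization of the Thurston distance from Theorem~\ref{thm:short-marking} to control the length ratio $l_\beta(Y_{-t}^R)/l_\beta(Y_{-t}^L)$ for the dual curve $\beta$ of $\alpha$. The lemma is in effect a bookkeeping step packaging those three estimates into a single statement that can be invoked uniformly in the subsequent proof of Lemma~\ref{lem:uniform-bound-for-scc}, where the assumption $l_0 e^{-t} \le \eps$ will correspond precisely to the ``thin part'' regime of $\mc{T}(S)$ along $\alpha$.
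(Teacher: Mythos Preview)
Your proposal is correct and matches the paper's approach exactly: the paper simply states ``Together, the previous three lemmas tell us:'' before the statement, with no further proof given. Your trichotomy on $l_0$ (namely $l_0\le\eps$, $\eps\le l_0\le 2$, $l_0\ge 2$) and the observation that these exhaust all cases under the standing hypothesis $l_0e^{-t}\le\eps$ is precisely the intended reading.
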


\subsection{The Thick Part of $\mc{T}(S)$} \label{subsec:thick-part}
In this subsection, we show that 
$d_{Th}(Y_{-t}^L, Y_{-t}^R)$ is uniformly bounded 
when $l_0e^{-t}$ is large. 

\begin{Lemma} \label{lem:thick-part-uniform-bound}
  In the setting of the above lemmas, if $1 < l_0e^{-t}$, then
  $d_{Th}(Y_{-t}^L, Y_{-t}^R)$ is uniformly bounded.
\end{Lemma}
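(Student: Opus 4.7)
The plan is to invoke the short-marking characterization of the Thurston distance (Theorem \ref{thm:short-marking}) applied to the pair $(Y_{-t}^L, Y_{-t}^R)$. Since these two surfaces have the same $\alpha$-length coordinate, they differ only by an earthquake along $\alpha$ of magnitude $\Delta\tau := \mathrm{Twist}_\alpha(Y_{-t}^L, Y_{-t}^R)$, and the length of any simple closed curve $\beta$ on $S$ changes by at most $i(\alpha,\beta)|\Delta\tau|$ under this earthquake. The strategy is to show that the ratio $|\Delta\tau|/l_\beta(Y_{-t}^L)$ stays uniformly bounded for every $\beta$ in the short marking $\mu_{Y_{-t}^L}$, so that each length ratio $l_\beta(Y_{-t}^R)/l_\beta(Y_{-t}^L)$ is uniformly bounded, and then to conclude via Theorem \ref{thm:short-marking}.

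The upper bound on $|\Delta\tau|$ will come directly from Equation \eqref{eq:four-punctured-twist-width}. Since $\log\coth$ is positive and strictly decreasing on $\R_+$, and $l_0 \ge l_0e^{-t} > 1$, the second term dominates the first, and the elementary estimate $\log\coth(x/2) \tl e^{-x}$ for $x \ge 1$ then yields $|\Delta\tau| \tl e^{-l_0 e^{-t}}$. For the matching lower bound on $l_\beta(Y_{-t}^L)$, either $\beta = \alpha$ (in which case the length ratio equals $1$ and contributes nothing) or $\beta$ crosses $\alpha$ with intersection number $i(\alpha,\beta) \ge i_0$, where $i_0 = 2$ for $S_{0,4}$ and $i_0 = 1$ for $S_{1,1}$. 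Applying the collar lemma to $\beta$ gives $l_\alpha(Y_{-t}^L) \ge 2 i(\alpha,\beta) \log(4/l_\beta(Y_{-t}^L))$ in the regime where $l_\beta$ is small, which rearranges to $l_\beta(Y_{-t}^L) \tg e^{-l_\alpha(Y_{-t}^L)/(2 i_0)}$; by the algebraic identity $l_\alpha(Y_{-t}^L) = 2 i_0 \cdot l_0 e^{-t}$ (which is precisely the reason for the different normalizations $l_0 = l_\alpha(Y)/2$ and $l_0 = l_\alpha(Y)/4$ chosen in the two cases), this equals $e^{-l_0 e^{-t}}$, matching the bound on $|\Delta\tau|$ exactly.

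Putting the two estimates together gives $|\Delta\tau|/l_\beta(Y_{-t}^L) \tl 1$, hence $l_\beta(Y_{-t}^R)/l_\beta(Y_{-t}^L) \le 1 + i(\alpha,\beta)\cdot O(1)$ is uniformly bounded, and Theorem \ref{thm:short-marking} then yields the desired bound on $d_{Th}(Y_{-t}^L, Y_{-t}^R)$. Note that this argument is also consistent with the previous lemmas for the thin case: there the same collar bound applies but now gives the polynomial-in-length estimates used in Section \ref{subsec:bounded-envelope}, so the thick lemma is in a sense the ``matching'' boundary-case analysis at $l_0e^{-t} = 1$.

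The main subtlety I expect as an obstacle is controlling the intersection number $i(\alpha,\beta)$ as $\beta$ ranges over $\mu_{Y_{-t}^L}$. On $S_{1,1}$ this is automatic, since $i(\alpha,\beta) \le 1$ for any two distinct simple closed curves. On $S_{0,4}$, however, the dual curve in the short marking could a priori be a high Dehn twist of $\alpha$ along the systole, so that $i(\alpha,\cdot)$ grows linearly with the twist power. The fix I would pursue is to sharpen the lower bound on $l_\beta$ for such a $\beta$: a curve obtained by $n$ Dehn twists of $\alpha$ along a short curve $\gamma$ must cross $\gamma$ at least $n$ times and therefore has length $\tg n\cdot w(l_\gamma)$, which grows with $n$ and more than compensates for the factor $i(\alpha,\beta)$ in the ratio. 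Alternatively, the shearing computations in Subsection \ref{subsec:twist-compute} should give direct a priori control on the Fenchel--Nielsen twist along the backward stretch path, which in turn bounds the intersection number of the short-marking dual with $\alpha$.
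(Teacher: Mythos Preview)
Your overall strategy coincides with the paper's: both reduce to showing that the earthquake of magnitude $|\Delta\tau| = |\mathrm{Twist}_\alpha(Y_{-t}^L,Y_{-t}^R)|$ moves the short-marking curves by a bounded length ratio. The paper, however, does not unpack this directly; it simply invokes Proposition~\ref{prop:earthquake-bound}, which packages the estimate $i(\alpha,\beta)/l_\beta(X) \tl e^{l_\alpha/2}$ for every $\beta$ in a short marking, and then checks that $e^{l_\alpha(Y_{-t}^L)/2}\,|\Delta\tau|$ is uniformly bounded using the $\log\coth$ asymptotics.

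Your argument has a genuine gap at exactly the point you flag as an ``obstacle,'' and your proposed resolution for $S_{1,1}$ is incorrect. It is \emph{not} true that $i(\alpha,\beta)\le 1$ for distinct simple closed curves on $S_{1,1}$: curves on the once-punctured torus are indexed by slopes $p/q\in\Q\cup\{\infty\}$ and $i(p/q,\,r/s)=|ps-qr|$, which is unbounded. So on both surfaces the factor $i(\alpha,\beta)$ in $1+i(\alpha,\beta)\,|\Delta\tau|/l_\beta$ is uncontrolled by your argument. Your collar bound $l_\beta \tg e^{-l_\alpha/(2i_0)}$ only uses the \emph{minimum} intersection number $i_0$ and cannot compensate for an arbitrarily large actual $i(\alpha,\beta)$; indeed, with $L=l_0e^{-t}$ and $n=i(\alpha,\beta)$, your bounds give $n|\Delta\tau|/l_\beta \tl n\,e^{-L(1-1/n)}$, which blows up as $n\to\infty$ for fixed $L$.

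The fix is not to sharpen the lower bound on $l_\beta$ in isolation, nor to separately bound $i(\alpha,\beta)$, but to bound the \emph{ratio} $i(\alpha,\beta)/l_\beta$ directly. This is precisely the content of the lemma immediately preceding Proposition~\ref{prop:earthquake-bound} in the paper: for $\beta$ in a short marking, $i(\alpha,\beta)/l_\beta \tl e^{l_\alpha/2}$. The proof uses more than the collar of $\beta$ alone---it combines the collar estimate $i(\alpha,\beta)\le l_\alpha/(2\log(1/l_\beta))$ with the monotonicity of $x\log(1/x)$ and the crude bound $l_\beta \ge e^{-l_\alpha/2}$ (coming from $i\ge 1$), plus the thick-part intersection Lemma~\ref{lem:thick-part-intersections} for the case where $\beta$ is not short. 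Once you have that ratio estimate, your computation goes through exactly as the paper's does: $i(\alpha,\beta)|\Delta\tau|/l_\beta \tl e^{l_\alpha/2}|\Delta\tau|$, and the $\log\coth$ decay then finishes it.
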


\begin{proof}
  We first treat the case when $S = S_{1,1}$ 
  is the once-punctured torus. We recall 
  that $l_{\alpha}(Y_{-t}^L) = l_{\alpha}(Y_{-t}^R) 
  = 2 l_0$, and by Proposition \ref{prop:earthquake-bound}, 
  it suffices to verify that for $l_0e^{-t} >1$, we 
  have that 
  \begin{align*}
    e^{l_0e^{-t}} \mathrm{Twist}_{\alpha}(Y_{-t}^L,Y_{-t}^R)
    &= 4e^{l_0e^{-t}} (e^{-t}\log\coth(l_0)
    -\log\coth(l_0e^{-t}))\\
    &\le 4e^{l_0e^{-t}} e^{-t}\log\coth(l_0)
    +4e^{l_0e^{-t}} \log\coth(l_0e^{-t})
  \end{align*}
  is bounded uniformly. Note that 
  $1 < \coth(l_0) < \coth(l_0e^{-t})$, 
  so it suffices to observe that $e^{2l_0e^{-t}}\log\coth(l_0e^{-t})$ 
  is bounded uniformly in $t$ and $l_0$, as long 
  as $l_0 e^{-t} > 1$.

  The case when $S = S_{0,4}$ is similar; we must verify 
  that $e^{2l_0e^{-t}}
  \mathrm{Twist}_{\alpha}(Y_{-t}^L,Y_{-t}^R)$ is uniformly bounded.
\end{proof}

\begin{Remark}
  It is a striking coincidence that the twisting 
  along a simple closed curve is decaying 
  at least as fast as $e^{-2l_0e^{-t}}$ in the 
  case of the once-punctured torus and the four-times punctured 
  sphere. In higher complexity surfaces, this is not 
  generally the case. In fact, the factor of $\frac12$ is 
  crucial in Proposition \ref{prop:earthquake-bound}, 
  since $e^{(2+\eps)l_0e^{-t}}\log\coth(l_0e^{-t})$ 
  is not uniformly bounded for any $\eps > 0$.
\end{Remark}

In fact, similar computations can be performed to estimate 
$d_{Th}(Y_{-t}^R, Y_{-t}^L)$, giving:

\begin{Lemma} \label{lem:stretch-paths-uniform-bound}
  $d_{Th}(Y_{-t}^L, Y_{-t}^R)$ and $d_{Th}(Y_{-t}^R, Y_{-t}^L)$ are
  uniformly bounded.
\end{Lemma}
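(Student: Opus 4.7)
The plan is to combine Lemmas \ref{lem:thin-part-uniform-bound} and \ref{lem:thick-part-uniform-bound} to bound $d_{Th}(Y_{-t}^L, Y_{-t}^R)$ across the full parameter range of $l_0 e^{-t}$, and then transfer the bound to the reverse distance $d_{Th}(Y_{-t}^R, Y_{-t}^L)$ by exploiting the symmetry of earthquakes. The previous two lemmas cover the thin regime $l_0 e^{-t} \le \eps$ and the thick regime $l_0 e^{-t} > 1$ for the forward distance, leaving only the intermediate band $\eps < l_0 e^{-t} \le 1$ and the reverse direction to handle.

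For the intermediate band, I would invoke Proposition \ref{prop:earthquake-bound} directly, exactly as in the proof of Lemma \ref{lem:thick-part-uniform-bound}. Writing $\tau = \mathrm{Twist}_\alpha(Y_{-t}^L, Y_{-t}^R)$, the point $Y_{-t}^R$ is obtained from $Y_{-t}^L$ by an earthquake of magnitude $|\tau|$ along $\alpha$, so it suffices to check that $e^{l_\alpha(Y_{-t}^L)/2}|\tau|$ is uniformly bounded. The factor $e^{l_\alpha/2}$ is bounded because $l_\alpha(Y_{-t}^L) = k l_0 e^{-t} \le k$ for $k \in \{2,4\}$. For $|\tau|$ I read off Equation \ref{eq:four-punctured-twist-width}: the term $\log\coth(l_0 e^{-t}/2)$ is bounded since $l_0 e^{-t}$ lies in a compact subinterval of $(0,\infty)$, and the term $e^{-t}\log\coth(l_0/2)$ is bounded either because $l_0$ itself is bounded, or because when $l_0 \to \infty$ the estimate $\log\coth(l_0/2) \sim 2 e^{-l_0}$ decays faster than $e^{-t}$ grows (since $l_0 e^{-t} \le 1$ forces $l_0 \ge t$ for large $t$).

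For the reverse direction, the key observation is that $Y_{-t}^L = Eq_{\alpha,-\tau}(Y_{-t}^R)$ and that $l_\alpha$ is preserved by earthquakes along $\alpha$. Applying Proposition \ref{prop:earthquake-bound} with base point $Y_{-t}^R$ and earthquake parameter $-\tau$ yields $d_{Th}(Y_{-t}^R, Y_{-t}^L) \pl \log(e^{l_\alpha/2}|\tau|)$, the same coarse bound as in the forward direction. The three-regime analysis — thin via Lemma \ref{lem:thin-part-uniform-bound}, intermediate as above, thick via Lemma \ref{lem:thick-part-uniform-bound} — then applies verbatim, since those arguments depend only on $l_\alpha$ and $|\tau|$, not on the orientation of the earthquake.

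The delicate point, already flagged in the Remark following Lemma \ref{lem:thick-part-uniform-bound}, is the thick regime: the exponent $l_\alpha/2$ in Proposition \ref{prop:earthquake-bound} is exactly borderline against the decay rate $e^{-2 l_0 e^{-t}}$ of the twist. The main work has already been done in that earlier lemma; the present proof is really just a matter of filling in the intermediate regime and observing that earthquake-based estimates are direction-symmetric.
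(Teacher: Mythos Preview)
Your proposal is correct and follows the same overall structure as the paper: combine the thin-part and thick-part lemmas, fill in the intermediate band, and then argue symmetry for the reverse distance. The one genuine difference is how the intermediate band $\eps \le l_0 e^{-t} \le 1$ is handled. The paper detours via the triangle inequality along the stretch paths to a reference time where $l_\alpha$ equals a fixed value (so the thick-part lemma applies there), absorbing a bounded additive error; you instead re-invoke Proposition~\ref{prop:earthquake-bound} directly, which is cleaner since both $e^{l_\alpha/2}$ and $|\tau|$ are manifestly bounded on that compact parameter interval. For the reverse distance the paper only says ``a similar argument can be shown''; your explicit observation that the short-marking and earthquake estimates depend only on $l_\alpha$ and $|\tau|$, and are therefore insensitive to swapping $L \leftrightarrow R$, supplies exactly what that phrase elides.
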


\begin{proof}
  We've seen the cases for when $l_0e^{-t} < \eps$ 
  and when $l_0e^{-t} >1$. 
  then let $Y_0^R$ (resp. $Y_0^L$) be the point on $Y_{-t}^R$
  (resp. $Y_0^L$) for which $l_{\alpha}(Y_0^L) = 1$. 

  Let $t$ be such that 
  $\eps \le l_0e^{-t} \le 1$, by the triangle inequality, 
  $d_{Th}(Y_{-t}^R,Y_{-t}^L) 
  \le 2(1-\eps) + d_{Th}(Y_0^R,Y_0^L)$ 
  which is uniformly bounded. A similar 
  argument can be shown for 
  $d_{Th}(Y_{-t}^L,Y_{-t}^R)$
\end{proof}

\subsection{Geodesic Envelopes}

Before proving Lemma \ref{lem:uniform-bound-for-scc}, we 
first analyze the large-scale structure of 
$Env(X,Y)$ for $X, Y \in \mc{T}(S)$, where $S = S_{1,1}$ 
or $S = S_{0,4}$. Let $\alpha$ be a 
simple closed curve in $S$, and let 
$Y, Y_{-t}^L$ and $Y_{-t}^R$ be defined as before

\begin{Lemma}
  The set $In(Y, \alpha)$ is a closed 
  set. Moreover, in Fenchel-Nielsen 
  co-ordinates, we can write:
  \begin{align*}
    In(Y, \alpha) 
    = \{X\in \mc{T}(S) : \tau_{\alpha}(Y_{-t}^L) \le 
    \tau_{\alpha}(X)\le \tau_{\alpha}(Y_{-t}^R), 
    l_{\alpha}(X) = l_{\alpha}(Y)e^{-t} \textrm{for some $t$}\}
  \end{align*}
  Where $\tau_{\alpha}(Y_{-t}^R)$ 
  and $\tau_{\alpha}(Y_{-t}^L)$ 
  are functions of only 
  $l_{\alpha}(Y)$ and $t$.
\end{Lemma}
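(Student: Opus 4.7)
The plan is to parametrize $\In(Y,\alpha)$ by the distance $t = d_{Th}(Z, Y)$ and describe each time slice $\In_t(Y,\alpha)$ in Fenchel--Nielsen coordinates relative to $\alpha$. Since $Z \in \In(Y,\alpha)$ means some forward geodesic from $Z$ to $Y$ maximally stretches $\alpha$, the length coordinate is immediately forced: $l_\alpha(Z) = l_\alpha(Y)e^{-t}$. The main work lies in identifying the image of $\tau_\alpha$ on each slice. To do this, I would first apply the backward analog of Theorem \ref{thm:conv-hull-env0}: the infinitesimal envelope at $Y$ of backward geodesics maximally stretching $\alpha$ is the convex hull of stretch vectors coming from maximal chain-recurrent completions of $\alpha$. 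In $S_{1,1}$ and $S_{0,4}$ the only such completions are $\lambda^L$ and $\lambda^R$, so the only extremal backward paths from $Y$ are $s \mapsto Y^L_{-s}$ and $s \mapsto Y^R_{-s}$, forcing $\tau_\alpha(Z) \in [\tau_\alpha(Y^L_{-t}), \tau_\alpha(Y^R_{-t})]$ for every $Z \in \In_t(Y,\alpha)$.

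For the reverse inclusion, I would exhibit an explicit broken stretch path realizing any twist in this interval. For $s \in [0,t]$ set $Z(s) = \mathrm{Stretch}\bigl(\mathrm{Stretch}(Y, \lambda^R, -s), \lambda^L, -(t-s)\bigr)$. Because $\alpha \subset \lambda^L \cap \lambda^R$ is maximally stretched along each leg, the concatenated forward path from $Z(s)$ to $Y$ is a length-$t$ geodesic with $\alpha \subset \Lambda(Z(s), Y)$, placing $Z(s)$ in $\In_t(Y,\alpha)$. Since $Z(0) = Y^L_{-t}$, $Z(t) = Y^R_{-t}$, and $Z(s)$ depends continuously on $s$, the intermediate value theorem applied to $\tau_\alpha \circ Z$ hits every value in $[\tau_\alpha(Y^L_{-t}), \tau_\alpha(Y^R_{-t})]$.

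The dependence of $\tau_\alpha(Y^{L/R}_{-t})$ on only $l_\alpha(Y)$ and $t$ follows from the explicit twist formulas of subsection \ref{subsec:twist-compute}: cutting $S_{1,1}$ or $S_{0,4}$ along $\alpha$ yields pairs of pants whose only cuffs are copies of $\alpha$ or cusps of length zero, so evaluating the $\Delta^{L/R}$ expressions at $Y^{L/R}_{-t}$ produces functions in only $l_\alpha(Y_{-t}^{L/R}) = l_\alpha(Y)e^{-t}$. Closedness of $\In(Y,\alpha)$ then follows from continuity of $l_\alpha$, $\tau_\alpha$ and the bounding functions $\tau_\alpha(Y^{L/R}_{-t})$ in $t$, together with closedness of the twist interval at each $t$. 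The main obstacle I anticipate is justifying the backward version of Theorem \ref{thm:conv-hull-env0}: one must verify that time-reversal of forward stretch paths preserves the characterization of extremal vectors via maximal chain-recurrent completions, so that the backward infinitesimal envelope of $\alpha$-stretched geodesics at $Y$ is indeed the convex hull of the two stretch vectors for $\lambda^L$ and $\lambda^R$.
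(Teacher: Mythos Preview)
Your reverse inclusion via the one-parameter family $Z(s)$ of broken stretch paths and the intermediate value theorem is correct and is essentially the paper's argument in a slightly different packaging: the paper, given $X$ in the twist interval, simply stretches $X$ along $\lambda^L$ until the path meets the curve $Y_{-t}^R$, then follows $Y_{-t}^R$ to $Y$. Both constructions produce the same concatenated $\lambda^L/\lambda^R$ geodesics.

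The gap is exactly the one you flag, and the paper avoids it rather than proving it. You want a \emph{backward} analogue of Theorem~\ref{thm:conv-hull-env0} at $Y$, and then to pass from that infinitesimal statement to the global bound $\tau_\alpha(Z)\in[\tau_\alpha(Y^L_{-t}),\tau_\alpha(Y^R_{-t})]$. Because the Thurston metric is asymmetric, backward $1$-jets at $Y$ of geodesics \emph{ending} at $Y$ are not covered by Theorem~\ref{thm:conv-hull-env0}, and no such statement is established anywhere in the paper. The paper instead argues by contradiction using only the \emph{forward} theorem, applied at $X$ and along the geodesic: if $\tau_\alpha(X)>\tau_\alpha(Y_{-t}^R)$, then for any forward geodesic $\gamma$ from $X$ maximally stretching $\alpha$, Theorem~\ref{thm:conv-hull-env0} (applied pointwise) gives $\dot\tau_\alpha(\gamma)$ bounded by the $\lambda^L$- and $\lambda^R$-stretch derivatives, so $\tau_\alpha(\gamma(s))\ge \tau_\alpha(\mathrm{Stretch}(X,\lambda^R,s))$ for all $s$. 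Since $\lambda^R$-stretch lines foliate $\mc{T}(S)$, the leaf through $X$ lies strictly on the $Y_{-t}^R$-side of the leaf through $Y$, and hence $\gamma$ can never reach $Y$.

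So the fix for your argument is not to prove a backward envelope theorem but to relocate the application of Theorem~\ref{thm:conv-hull-env0} from $Y$ to $X$ (and along $\gamma$), turning the question into a forward comparison with the $\lambda^R$-foliation. Your treatment of the dependence of $\tau_\alpha(Y_{-t}^{L/R})$ on $(l_\alpha(Y),t)$ and of closedness is fine and slightly more explicit than what the paper writes.
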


\begin{proof}
  Denote the second set by $A(Y)$.  Note that $Y_{-t}^R$ and
  $Y_{-t}^L$ are two smooth paths in $\mc{T}(S)$, which intersect at
  $Y$. In particular, they split $\mc{T}(S)$ into four complementary
  components (this is because stretch paths do not accumulate inside
  $\mc{T}(S)$), one of which is $A(Y)$.

  If $X \in A(Y)$, then consider $\mathrm{Stretch}(X,\lambda^L,t)$,
  and note that this path must $Y_{-t}^R$ at $X'$. 
  The contatenation of $\mathrm{Stretch}(X,\lambda^L,t)$ 
  from $X$ to $X'$, and the path $Y_{-t}^R$ from
  $X'$ to $Y$ shows that $X \in In(Y,\alpha)$.

  \begin{figure}
    \begin{center}
      \begin{tikzpicture}[scale = 2]
        \def \l{1}
        \def \tau{0}
      \end{tikzpicture}
    \end{center}
  \end{figure}

  Conversely, assume that $X \in In(Y,\alpha)$ but not in 
  $A(Y)$. Without loss of generality, 
  assume that $\tau_{\alpha}(X) > \tau_{\alpha}(Y_{-t}^R)$. 
  Let $\gamma(t)$ be the path from 
  $X$ to $Y$ which maximally stretches $\alpha$ along it. 
  By Theorem \ref{thm:conv-hull-env0}, it follows that 
  $\tau_{\alpha}(\gamma(t))
  \ge \tau_{\alpha}(\mathrm{Stretch}(X,\lambda^R,t))$ 
  for any $t$, and so $\gamma(t)$ cannot reach 
  $Y$ (by construction of $Y_{-t}^R$).
\end{proof}

\begin{Corollary}
  Let $S  =S_{1,1}$ or $S = S_{0,4}$. 
  For any $X,Y \in \mc{T}(S)$, we have:
  \begin{itemize}
    \item $Env(X,Y)$ is a unique geodesic from $X$ to $Y$ or,
    \item $Env(X,Y)$ is a combinatorial 
      quadrilateral whose edges are stretch paths
  \end{itemize}
\end{Corollary}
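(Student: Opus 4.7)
The plan is to split into the two possible structures of $\Lambda(X,Y)$ and, in the non-unique case, realize $\Env(X,Y)$ as the intersection of two ``bigon'' regions, each bounded by two stretch paths. First, by Proposition 26 of \cite{BZ05} (for $S_{0,4}$) and Theorem 1.1 of \cite{DLRT20} (for $S_{1,1}$), the maximally stretched lamination $\Lambda(X,Y)$ is either maximal among chain-recurrent laminations, or it consists of (or contains) a single simple closed curve $\alpha$. In the first case, the uniqueness part of the Thurston stretch path theorem (as recalled in the background, and as used in the proof of Theorem \ref{thm:bounded-envelope}) immediately gives that $\Env(X,Y)$ is a single geodesic.

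So assume $\Lambda(X,Y) = \alpha$ is a simple closed curve. Then $\Env(X,Y) = \Outenv(X,\alpha) \cap \In(Y,\alpha)$. The immediately preceding lemma describes $\In(Y,\alpha)$ in Fenchel--Nielsen coordinates as the closed region bounded by the two stretch paths $Y_{-t}^{L}$ and $Y_{-t}^{R}$ coming into $Y$. By reversing the roles of $X$ and $Y$ (i.e.\ running the identical argument with backward geodesics replaced by forward geodesics and $\lambda^{L}, \lambda^{R}$ now issuing from $X$), the same reasoning shows that $\Outenv(X,\alpha)$ is the closed region bounded by the two stretch paths $X_{t}^{L} = \mathrm{Stretch}(X,\lambda^{L},t)$ and $X_{t}^{R} = \mathrm{Stretch}(X,\lambda^{R},t)$.

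The key observation is then that on the level-set $\{Z : l_{\alpha}(Z) = l_{\alpha}(Y) e^{-s}\}$ (equivalently $l_\alpha(Z) = l_\alpha(X) e^{s'}$), each of the four stretch paths $X_t^L, X_t^R, Y_{-t}^L, Y_{-t}^R$ meets the level-set transversely in a single point, and on this cross-section $\Env(X,Y)$ is the intersection of two arcs (in the twist coordinate) bounded by these four points. Intersecting the two described regions therefore produces a set bounded by arcs of $X_t^L, X_t^R, Y_{-t}^L, Y_{-t}^R$: starting at $X$, travel along $X_t^L$ until it meets $Y_{-t}^L$, continue along $Y_{-t}^L$ into $Y$, then return along $Y_{-t}^R$ until it meets $X_t^R$, and close up back at $X$ along $X_t^R$. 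This is the combinatorial quadrilateral claimed. The main things to verify carefully are (i) that the ``opposite'' pairs of stretch paths ($X_t^L$ with $Y_{-t}^R$, and $X_t^R$ with $Y_{-t}^L$) really do meet in exactly one point each, which follows from the monotonicity of the twist coordinate along a stretch path established by Lemma \ref{lem:twist-computed} in subsection \ref{subsec:twist-compute}, and (ii) handling the degenerate cases where $X$ already lies on one of $Y_{-t}^L$ or $Y_{-t}^R$, in which the quadrilateral collapses to a triangle or to a single stretch path geodesic, which still falls under one of the two bullets of the corollary (the single stretch path case going back to the first bullet). The main obstacle will be carefully ruling out additional intersections of the four stretch paths that would distort the quadrilateral picture; for this one uses that, within the region $\Outenv(X,\alpha)$, each $l_\alpha$-level set meets each of the two bounding stretch paths exactly once, which follows from the strictly increasing nature of $l_\alpha$ along a stretch path maximally stretching $\alpha$.
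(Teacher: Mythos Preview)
Your proposal is correct and follows essentially the same route as the paper: both reduce to the dichotomy on $\Lambda(X,Y)$, and in the simple-closed-curve case write $\Env(X,Y)=\Outenv(X,\alpha)\cap\In(Y,\alpha)$ as the intersection of two regions each bounded by the pair of stretch paths for $\lambda^L,\lambda^R$, then read off the quadrilateral. The paper's version is much terser (it simply invokes Theorem~\ref{thm:conv-hull-env0} and asserts ``the intersection of two cones in $\mc{T}(S)$ is a combinatorial quadrilateral''), while you spell out the cross-section/monotonicity argument and the degenerate cases; do note, however, a small labeling slip between your quadrilateral description (where $X_t^L$ meets $Y_{-t}^L$) and your item~(i) (where you pair $X_t^L$ with $Y_{-t}^R$).
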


\begin{proof}
  If we are not in the first case, then 
  $\Lambda(X,Y)$ must be a simple closed curve, $\alpha$.
  Notice that $Env(X,Y) = In(Y,\alpha) \cap 
  \Outenv(X,\alpha)$. From Theorem \ref{thm:conv-hull-env0}, 
  we see that $\Outenv(\Lambda(X,Y))$ is a 
  cone starting at $X$ and bounded by 
  $\mathrm{Stretch}(X,\lambda^L,t)$ and 
  $\mathrm{Stretch}(X, \lambda^R,t)$. The intersection 
  of two cones in $\mc{T}(S)$ is a combinatorial 
  quadrilateral.
\end{proof}

This characterization allows us to prove
Lemma \ref{lem:uniform-bound-for-scc}
\begin{Theorem*}
  Let $X, Y \in \mc{T}(S)$ be such that 
  $\Lambda(X,Y) = \alpha$. For any $Z,Z' \in Env(X,Y)$, 
  if $l_{\alpha}(Z) = l_{\alpha}(Y)$, then 
  $d_{Th}(Z,Z')$ is bounded uniformly. In particular, 
  $w(X,Y)$ is uniformly bounded.
\end{Theorem*}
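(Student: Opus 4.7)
The ``in particular'' clause follows from the first assertion: on every geodesic $g$ from $X$ to $Y$, the curve $\alpha$ is maximally stretched, so $l_{\alpha}(g(t)) = l_{\alpha}(X)e^{t}$ depends only on $t$, and all points of $\Env_t(X,Y)$ share this common value of $l_{\alpha}$. So I focus on bounding $d_{Th}(Z, Z')$ for $Z, Z' \in \Env(X,Y)$ with the same value of $l_{\alpha}$.

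The key reduction is to an earthquake estimate. Since both $S_{1,1}$ and $S_{0,4}$ have complexity $3g - 3 + n = 1$, the curve $\alpha$ alone is a pair-of-pants decomposition and Fenchel-Nielsen coordinates on $\mc{T}(S)$ reduce to the single pair $(l_{\alpha}, \tau_{\alpha})$. Hence two points with equal $l_{\alpha}$ differ only in twist coordinate, so one may write $Z' = \mathrm{Eq}_{\alpha, \tau}(Z)$ with $\tau = \tau_{\alpha}(Z') - \tau_{\alpha}(Z)$. Proposition \ref{prop:earthquake-bound} then yields
\[
d_{Th}(Z, Z') \pl \log\!\big(e^{\ell/2}\,|\tau|\big), \qquad \ell := l_{\alpha}(Z),
\]
so the problem reduces to uniformly bounding $e^{\ell/2}\,|\tau|$.

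Next, I bound $|\tau|$ by the full twist width of the cross-section of $\Env(X,Y)$ at level $\ell$. By the corollary preceding the theorem, $\Env(X,Y)$ is a combinatorial quadrilateral whose four sides are the stretch paths $X_t^{L}$, $X_t^{R}$, $Y_{-s}^{L}$, $Y_{-s}^{R}$, and the cross-section at level $\ell$ is a segment in $(l_{\alpha}, \tau_{\alpha})$-coordinates whose two endpoints lie on two of these four sides. By Theorem \ref{thm:conv-hull-env0}, the twist coordinate along any geodesic in $\Env(X,Y)$ is sandwiched between those along the $\lambda^L$ and $\lambda^R$ stretch directions, so the twist width of the cross-section is controlled by the quantities $|\mathrm{Twist}_{\alpha}(Y_{-s}^L, Y_{-s}^R)|$ and $|\mathrm{Twist}_{\alpha}(X_t^L, X_t^R)|$ at the parameters $s$, $t$ that produce level $\ell$.

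Finally, the proofs of Lemmas \ref{lem:thin-part-uniform-bound} and \ref{lem:thick-part-uniform-bound} are structured precisely so as to verify that $e^{l_{\alpha}/2}\,|\mathrm{Twist}_{\alpha}(Y_{-s}^L, Y_{-s}^R)|$ is uniformly bounded across the thin, intermediate, and thick regimes of $l_{\alpha}$; the same estimates apply on the forward $X$-side. Substituting these into the earthquake bound above produces the uniform bound on $d_{Th}(Z, Z')$ as required. The main obstacle is the ``mixed'' cross-section whose two endpoints lie on one $X$-side and one $Y$-side boundary of the quadrilateral; this is handled by inserting the relevant corner point of the envelope into the cross-section and applying the triangle inequality, so that the twist splits into two pieces each falling under one of the pure cases already covered by the earlier lemmas.
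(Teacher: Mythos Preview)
Your overall architecture matches the paper's: reduce to a twist estimate via Fenchel--Nielsen coordinates, bound the twist by the extremal stretch paths, and then invoke the thin/thick lemmas. There is, however, a genuine gap in the thin regime.

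You claim that Lemmas~\ref{lem:thin-part-uniform-bound} and~\ref{lem:thick-part-uniform-bound} together verify that $e^{\ell/2}\,|\mathrm{Twist}_{\alpha}(Y_{-t}^L,Y_{-t}^R)|$ is uniformly bounded, and then feed this into Proposition~\ref{prop:earthquake-bound}. This is correct for the thick part, where the proof of Lemma~\ref{lem:thick-part-uniform-bound} is indeed organized around bounding exactly that quantity. But it is \emph{false} in the thin part: from equation~\eqref{eq:four-punctured-twist-width}, as $l_{-t}=l_0e^{-t}\to 0$ one has $|\mathrm{Twist}_{\alpha}(Y_{-t}^L,Y_{-t}^R)|\sim 4\log\coth(l_{-t}/2)\to\infty$, while $e^{\ell/2}\to 1$, so $e^{\ell/2}|\tau|$ is unbounded and the earthquake estimate only yields $d_{Th}\pl \log\log(1/\ell)$. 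The thin-part lemmas do \emph{not} bound $e^{\ell/2}|\mathrm{Twist}|$; they bypass the earthquake estimate entirely and instead use Theorem~E of \cite{LRT14} (the short-marking estimate), controlling $l_{\beta}(Y_{-t}^R)/l_{\beta}(Y_{-t}^L)$ directly. You need to invoke that argument separately in the thin regime rather than routing everything through Proposition~\ref{prop:earthquake-bound}.

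A second, minor point: your detour through the $X$-side stretch paths and the ``mixed'' cross-section is unnecessary. The lemma immediately preceding the theorem shows that $\Env(X,Y)\subset \In(Y,\alpha)$ is already sandwiched between $Y_{-t}^L$ and $Y_{-t}^R$ in the twist coordinate at every level, so $|\tau_\alpha(Z)-\tau_\alpha(Z')|\le |\mathrm{Twist}_\alpha(Y_{-t}^L,Y_{-t}^R)|$ regardless of which sides of the quadrilateral bound the cross-section. This is how the paper proceeds: since $Z,Z'$ and $Y_{-t}^L,Y_{-t}^R$ all share the same $l_\alpha$ and the former pair has smaller twist separation, the same computations (both the Theorem~E estimate in the thin part and the earthquake estimate in the thick part) that bound $d_{Th}(Y_{-t}^L,Y_{-t}^R)$ bound $d_{Th}(Z,Z')$ as well, and one appeals directly to Lemma~\ref{lem:stretch-paths-uniform-bound}.
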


\begin{proof}
  We must have that 
  $\mathrm{Twist}_{\alpha}(Z,Z') \le 
  \mathrm{Twist}_{\alpha}(Y_{-t}^L,Y_{-t}^R)$, so 
  using the exact same 
  computations as in the previous subsection, 
  $d_{Th}(Z,Z') \le d_{Th}(Y_{-t}^L,Y_{-t}^R)$. 
  The theorem then follows from 
  Lemma \ref{lem:stretch-paths-uniform-bound}
\end{proof}

\bibliographystyle{plain}
\bibliography{main.bib}

\begin{thebibliography}{10}

\bibitem{BBFS09}
M.~Bestvina, K.~Bromberg, K.~Fujiwara, and J.~Souto.
\newblock Shearing coordinates and convexity of length functions on
  teichm\"{u}ller space.
\newblock {\em American Journal of Mathematics}, 135, 03 2009.

\bibitem{Bon01}
F.~Bonahon.
\newblock Shearing hyperbolic surfaces, bending pleated surfaces and thurston's
  symplectic form.
\newblock {\em Ann. Fac. Sci. Toulouse Math.}, 5, 07 2001.

\bibitem{BZ05}
F.~Bonahon and X.~Zhu.
\newblock The metric space of geodesic laminations on a surface ii: small
  surfaces.
\newblock pages 509--547, 05 2005.

\bibitem{DLRT20}
D.~Dumas, A.~Lenzhen, K.~Rafi, and J.~Tao.
\newblock Coarse and fine geometry of the thurston metric.
\newblock {\em Forum of Mathematics, Sigma}, 8:e28, 2020.

\bibitem{Gen15}
M.~Gendulphe.
\newblock {DERIVATIVES OF LENGTH FUNCTIONS AND SHEARING COORDINATES ON
  TEICHM{\"U}LLER SPACES}.
\newblock working paper or preprint, June 2015.

\bibitem{HOP21}
Y.~Huang, K.~Ohshika, and A.~Papadopoulos.
\newblock The infinitesimal and global thurston geometry of teichm{\"u}ller
  space, 2021.

\bibitem{LRT14}
A.~Lenzhen, K.~Rafi, and J.~Tao.
\newblock The shadow of a thurston geodesic to the curve graph.
\newblock {\em Journal of Topology}, 8, 05 2014.

\bibitem{Mar16}
B.~Martelli.
\newblock {\em An Introduction to Geometric Topology}.
\newblock Bruno Martelli, 2022.

\bibitem{Min93}
Y.~Minsky.
\newblock Teichm{\"u}ller geodesics and ends of hyperbolic 3-manifolds.
\newblock {\em Topology}, 32:625--647, 1993.

\bibitem{PW22}
Huiping Pan and Michael Wolf.
\newblock Ray structures on teichm\"uller space, 2022.

\bibitem{PT07}
A.~Papadopoulos and G.~Th\'{e}ret.
\newblock On teichmueller's metric and thurston's asymmetric metric on
  teichmueller apace. handbook of teichm\"{u}ller theory.
\newblock {\em European Mathematical Society Publishing House}, 1, 11, 2007.

\bibitem{HP92}
R.C. Penner and J.~Harer.
\newblock {\em Combinatorics of Train Tracks}.
\newblock Annals of Math. Studies 125, Princeton University Press, 1992.

\bibitem{Raf07}
K.~Rafi.
\newblock Thick-thin decomposition for quadratic differentials.
\newblock {\em Mathematical Research Letters}, 14:333--341, 2007.

\bibitem{Ther14}
G.~Th{\'e}ret.
\newblock Convexity of length functions and thurston's shear coordinates.
\newblock 2014.

\bibitem{Thu86}
W.~P. Thurston.
\newblock Minimal stretch maps between hyperbolic surfaces, preprint.
\newblock 1986.

\end{thebibliography}

\end{document}